\newtheorem{thm}{Theorem}[section]
\newtheorem{cor}[thm]{Corollary}
\newtheorem{lem}[thm]{Lemma}
\theoremstyle{definition}
\newtheorem{rmk}[thm]{Remark}
\numberwithin{equation}{section}
\title[Lipschitz and commutator estimates for commuting tuples]{Weak type operator Lipschitz and commutator estimates for commuting tuples}
\author{M. Caspers, F. Sukochev, D. Zanin}
\date{\today, {\it MSC2000}: 47B10, 47L20, 47A30.\\ {\it Support acknowledgements.} First author: This project has received funding from the European Union's Horizon 2020 research and innovation programme under grant agreement No. 702139. The work of the second and third named authors is supported by the ARC}
\address{M. Caspers, Mathematisch Instituut,
Budapestlaan 6, 3584 CD, Utrecht, The Netherlands}
\email{m.p.t.caspers@uu.nl}
\address{F. Sukochev, D. Zanin, School of Mathematics and Statistics, UNSW, Kensington 2052, NSW, Australia}
 \email{f.sukochev@unsw.edu.au}
\email{d.zanin@unsw.edu.au}
\begin{document}

\begin{abstract}
Let $f: \mathbb{R}^d \to\mathbb{R}$ be a Lipschitz function. If $B$ is a bounded self-adjoint operator and if $\{A_k\}_{k=1}^d$ are commuting bounded self-adjoint operators such that $[A_k,B]\in L_1(H),$ then
$$\|[f(A_1,\cdots,A_d),B]\|_{1,\infty}\leq c(d)\|\nabla(f)\|_{\infty}\max_{1\leq k\leq d}\|[A_k,B]\|_1,$$
where $c(d)$ is a constant independent of $f$, $\mathcal{M}$ and $A,B$ and $\|\cdot\|_{1,\infty}$ denotes the weak $L_1$-norm.

If $\{X_k\}_{k=1}^d$ (respectively, $\{Y_k\}_{k=1}^d$) are commuting bounded self-adjoint operators such that $X_k-Y_k\in L_1(H),$ then
$$\|f(X_1,\cdots,X_d)-f(Y_1,\cdots,Y_d)\|_{1,\infty}\leq c(d)\|\nabla(f)\|_{\infty}\max_{1\leq k\leq d}\|X_k-Y_k\|_1.$$
\end{abstract}

\maketitle

\section{Introduction}

Let $f: \mathbb{R} \rightarrow \mathbb{R}$ be a Lipschitz function. Let $\mathcal{M}$ be a semi-finite von Neumann algebra and let $\mathcal{M}_{sa}$ be its self-adjoint part. This paper deals with differentiability properties of (multi-dimensional versions of) the mapping
\begin{equation}\label{Eqn=SmoothnessMap}
\mathcal{M}_{sa} \ni A \mapsto f(A).
\end{equation}
The interest in such differentiability problems comes from very diverse directions: (i) the mapping  \eqref{Eqn=SmoothnessMap} relates strongly to perturbations of commutators, (ii) there is a prolific series of papers devoted to differentiability and   Lipschitz properties of \eqref{Eqn=SmoothnessMap}, (iii)  the map \eqref{Eqn=SmoothnessMap} relates to  Connes' non-commutative geometry and in particular the spectral action, see \cite{CM}, \cite{Skripka}, \cite{Suij}.

The roots of the results of this paper can be traced back to a problem of Krein \cite{Krein}   which led to a remarkable diversity of papers concerning double operator integrals and Schur multipliers.  The original Krein problem   asks if for a function $f$  being Lipschitz implies that it is operator Lipschitz, meaning that \eqref{Eqn=SmoothnessMap} is Lipschitz for the uniform norm on $\mathcal{M}_{sa}$. Krein's question is very natural but it was shown  that it has a negative answer \cite{Far8}, unless one imposes stricter differentiability assumptions on $f$ (like belonging to certain Besov or Sobolev spaces), see \cite{APPS}, \cite{BiSo1}, \cite{PellerHankel} to name just a few. Contributions to the problem were made by various people including Davies \cite{Davies}, Kato \cite{Kato} and Kosaki \cite{Kosaki} who found positive and negative results (under suitable conditions) for the analogue of Krein's problem for $L_p$-norms.

With the development of double operator integrals (see e.g.  \cite{BirSolDOI}, \cite{PSJFA04},  \cite{PSW}) significant steps forward were made on Lipschitz and differentiability properties of the mapping \eqref{Eqn=SmoothnessMap}, which were shown to be equivalent to various commutator estimates (see \cite{BirSol89}, \cite[Theorem 2.2]{DDPS1}). In turn this led to questions on the behavior of certain Schur multipliers and related double operator integrals.

Finding estimates -- even if they are non-optimal -- for norms of Schur multipliers is a highly non-trivial task. The hard part is that Schur multipliers acting on  $L_\infty$-spaces (or just matrix algebras) can often be estimated using Stinespring dilations, see e.g. \cite{PisierBook}. However, if one considers Schur multipliers on $L_p$-spaces  this tool is inapplicable. Therefore, in order to attack Krein's problem for $L_p$-spaces, $p\not = 1, \infty$ we are forced to introduce new techniques.

 A corner stone result was obtained in  \cite{PotapovSukochev2} (see also \cite{HNVW}): it was shown by D. Potapov and the second named author that the mapping \eqref{Eqn=SmoothnessMap} is Lipschitz continuous with respect to the $L_p$-norm, $1 < p < \infty$. As \cite{PotapovSukochev2} involves an application of the vector valued Marcinkiewicz multiplier theorem (due to Bourgain) it was not clear what the optimal non-commutative Lipschitz constants are.  A sharp  estimate for $L_p$-spaces was found in \cite{CMPS}. However in the category of symmetric spaces the question whether the so-called weak-$(1,1)$ estimate holds remained open.

 A first result in this weak-$(1,1)$ direction was obtained by Nazarov and Peller \cite{NazarovPeller} who  proved it in the special case that $A - B$ has rank 1. In the same paper a question concerning validity of this result for an arbitrary trace class perturbation  $A-B$ was posed.  A full answer for $f$ being the absolute value map was obtained in \cite{CPSZ} using positive definite Schur multipliers and triangular truncations. In \cite{CPSZ2} this result was extended to all Lipschitz functions. The result is ultimate for the functions of 1 variable: it is optimal within the category of symmetric spaces and it implies all other known estimates on perturbations of commutators and  Lipschitz functions obtained before \cite{CMPS},   \cite{CPSZ}, \cite{Davies},  \cite{DDPS1}, \cite{DDPS2}, \cite{Kato}, \cite{Kosaki}, \cite{NazarovPeller},  \cite{PotapovSukochev2}. The key ingredient of the proof in \cite{CPSZ2} is a new connection with non-commutative Calder\'on-Zygmund theory and in particular with the main result from Parcet's fundamental paper \cite{Parcet} (see also the recent paper by Cadilhac \cite{Cad} for a substantially shorter proof). \hyphenation{multi-pliers}\hyphenation{ana-ly-sis}

In this paper we focus on multi-dimensional (or multi-variable) Lipschitz estimates for the mapping \eqref{Eqn=SmoothnessMap} which naturally includes a version of the Nazarov-Peller problem for normal operators.
This study is deeply connected with that of classical Fourier multipliers. In particular, the dimension dependence of classes of multipliers as Bochner-Riesz multipliers, Riesz multipliers, (directional) Hilbert transforms et cetera, has been an important theme of research (we refer to Grafakos's book \cite{Grafakos} with ample such results).   Therefore, it is natural to look at the higher dimensional behavior of \eqref{Eqn=SmoothnessMap}.  Some results were obtained in \cite{KPSS} and \cite{CMPS}. However, the results in these papers are not optimal.   In this paper we obtain the following.

\begin{thm}\label{final theorem} For every Lipschitz function $f:\mathbb{R}^d\to\mathbb{R}$ and for every collection $\mathbf{A}=\{A_k\}_{k=1}^d\subset B(H)$ of commuting self-adjoint operators such that $[A_k,B]\in L_1(H),$ we have
$$\|[f(\mathbf{A}),B]\|_{1,\infty}\leq c(d)\|\nabla(f)\|_{\infty}\cdot\max_{1\leq k\leq d}\|[A_k,B]\|_1.$$
For every Lipschitz function $f:\mathbb{R}^d\to\mathbb{R}$ and for every collections $\mathbf{X}=\{X_k\}_{k=1}^d\subset B(H),$ $\mathbf{Y}=\{Y_k\}_{k=1}^d\subset B(H)$ of commuting self-adjoint operators such that $X_k-Y_k\in L_1(H),$ we have
$$\|f(\mathbf{X})-f(\mathbf{Y})\|_{1,\infty}\leq c(d)\|\nabla(f)\|_{\infty}\cdot\max_{1\leq k\leq d}\|X_k-Y_k\|_1.$$
\end{thm}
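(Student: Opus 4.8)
The plan is to reduce everything to the commutator estimate and then prove that estimate by a Fourier-analytic argument that linearizes $f(\mathbf{A})$ against the bounded self-adjoint operators $A_1,\dots,A_d$. First, the two assertions in Theorem~\ref{final theorem} are equivalent via the standard doubling trick: given commuting tuples $\mathbf{X}$ and $\mathbf{Y}$ with $X_k-Y_k\in L_1(H)$, one forms on $H\oplus H$ the operators $A_k=X_k\oplus Y_k$ and $B=\begin{pmatrix}0&1\\0&0\end{pmatrix}$, so that $[A_k,B]$ has entry $X_k-Y_k$ and $[f(\mathbf{A}),B]$ has entry $f(\mathbf{X})-f(\mathbf{Y})$; since $\|\cdot\|_{1,\infty}$ behaves well under this $2\times2$ amplification, the Lipschitz estimate follows from the commutator estimate. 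So it suffices to prove the first inequality.

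For the commutator estimate I would first reduce to $f$ having compactly supported (or at least integrable-with-weight) Fourier transform. Since the operators $A_k$ are bounded, only the values of $f$ on a fixed compact box matter; after multiplying $f$ by a smooth cutoff (which changes $\|\nabla f\|_\infty$ by a dimensional constant) and mollifying, one may assume $f\in C^\infty$ with $\hat f$ nice, and write $f(\mathbf{A})=\int_{\mathbb{R}^d}\hat f(\xi)\,e^{i\langle\xi,\mathbf{A}\rangle}\,d\xi$ where $e^{i\langle\xi,\mathbf{A}\rangle}=e^{i\xi_1A_1}\cdots e^{i\xi_dA_d}$ (well defined since the $A_k$ commute). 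Then
$$[f(\mathbf{A}),B]=\int_{\mathbb{R}^d}\hat f(\xi)\,[e^{i\langle\xi,\mathbf{A}\rangle},B]\,d\xi,$$
and the commutator of the exponential expands by the Duhamel/telescoping identity $[e^{i\xi_1A_1}\cdots e^{i\xi_dA_d},B]=\sum_{k=1}^d e^{i\xi_1A_1}\cdots[e^{i\xi_kA_k},B]\cdots e^{i\xi_dA_d}$ together with $[e^{i\xi_kA_k},B]=i\xi_k\int_0^1 e^{is\xi_kA_k}[A_k,B]e^{i(1-s)\xi_kA_k}\,ds$. This exhibits $[f(\mathbf{A}),B]$ as an average of unitary conjugates of the $L_1$-operators $\xi_k[A_k,B]$, weighted by $\hat f(\xi)$; the factor $\xi_k$ is exactly compensated by $\|\nabla f\|_\infty$ in the sense that $\widehat{\partial_k f}(\xi)=i\xi_k\hat f(\xi)$.

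The heart of the matter, and the main obstacle, is that one cannot simply take $L_1$-norms inside the integral: that would produce $\int|\xi||\hat f(\xi)|\,d\xi$, which is not controlled by $\|\nabla f\|_\infty$ (it is morally a Besov/Sobolev norm). This is precisely the phenomenon behind the failure of Krein's problem, and it is why the conclusion is only a \emph{weak} $L_1$ estimate. The correct route is to interpret the above as a non-commutative Calder\'on--Zygmund / Schur-multiplier problem: diagonalizing the commuting tuple $\mathbf{A}$ by a joint spectral measure on $\mathbb{R}^d$, the operator $[f(\mathbf{A}),B]$ becomes a double operator integral whose symbol is the $d$-dimensional divided difference $\Phi_f(x,y)=\frac{f(x)-f(y)}{|x-y|}$-type kernel, acting on $\sum_k(x_k-y_k)\,db_k$. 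One must show this symbol defines a Calder\'on--Zygmund kernel (in the space of variables $x,y\in\mathbb{R}^d$) with constants depending only on $d$ and $\|\nabla f\|_\infty$, verifying the size and H\"ormander smoothness conditions by direct estimation of the divided difference and its derivatives. Then one invokes the non-commutative weak-$(1,1)$ machinery — the Cuculescu decomposition and the pseudo-localisation/Calder\'on--Zygmund theorem of Parcet (as used in \cite{CPSZ2}) — applied in the $d$-dimensional setting, to pass from the $L_1$ bound on $\max_k\|[A_k,B]\|_1$ to the weak $L_1$ bound on $[f(\mathbf{A}),B]$. The dimension-dependent constant $c(d)$ arises from the kernel estimates and from the CZ constants in $d$ variables. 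I expect the technical crux to be formulating and checking the CZ kernel conditions uniformly in $f$ with only the first-order bound $\|\nabla f\|_\infty$ available, since higher derivatives of $f$ do not appear in the hypothesis and so must be absorbed — presumably by a Littlewood--Paley decomposition of $f$ in frequency and summing the resulting (exponentially bounded) dyadic contributions.
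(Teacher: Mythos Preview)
Your reduction of the second assertion to the first via the $2\times 2$ doubling trick is correct and matches the paper (the paper uses the self-adjoint $B=\begin{pmatrix}0&1\\1&0\end{pmatrix}$, but this is inessential). The Duhamel/Fourier expansion you write down is also fine as motivation, and you correctly identify that naively estimating in $L_1$ fails.

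The genuine gap is in the Calder\'on--Zygmund step. You propose to view the double operator integral symbol
\[
f_{k}(\lambda,\mu)=\frac{(f(\lambda)-f(\mu))(\lambda_k-\mu_k)}{|\lambda-\mu|^2}
\]
as a CZ kernel and invoke Parcet's weak-$(1,1)$ theorem. Two obstructions arise. First, Parcet's theorem (and the version used in \cite{CPSZ2}) is for \emph{convolution} operators, i.e.\ kernels depending only on $\lambda-\mu$; but $f_k(\lambda,\mu)$ is not a function of $\lambda-\mu$ unless $f$ is affine, so the non-commutative CZ machinery does not apply as stated. Second, even if one had a non-convolution version, the Hörmander/gradient condition on $f_k$ would require control of second differences of $f$, and you only have $\|\nabla f\|_\infty$; the Littlewood--Paley decomposition you suggest does not repair this uniformly in $f$, since summing the dyadic pieces reintroduces a norm of $f$ strictly stronger than the Lipschitz norm.

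The paper sidesteps both issues by a transference trick that decouples the CZ analysis from $f$ entirely. One works in $d+1$ variables and chooses a \emph{fixed} smooth homogeneous function $g$ on $\mathbb{R}^{d+1}$ (essentially $g(t)=t_{k_0}t_{d+1}/\sum_{k\le d}t_k^2$ on the relevant region), independent of $f$. After discretising the spectrum, one conjugates by the unitary $U_f=\sum_{\mathbf{i}}p_{\mathbf{i}}\otimes e_{(\mathbf{i},f(\mathbf{i}))}$ on $L_2(\mathcal{M})\otimes L_2(\mathbb{T}^{d+1})$; this converts $T_{f_{k_0}}^{\mathbf{A},\mathbf{A}}$ into the Fourier multiplier $g(\nabla_{\mathbb{T}^{d+1}})$, because $g(\mathbf{i}-\mathbf{j},f(\mathbf{i})-f(\mathbf{j}))=f_{k_0}(\mathbf{i},\mathbf{j})$ whenever $f$ is a contraction. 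Now $g$ is a genuine convolution multiplier with smooth homogeneous symbol, so Parcet's theorem applies to $g$ (not to $f$), yielding a constant depending only on $d$. The Lipschitz assumption on $f$ enters only through the inequality $|f(\mathbf{i})-f(\mathbf{j})|\le|\mathbf{i}-\mathbf{j}|$, which is exactly what is needed for the identity $g(\cdot)=f_{k_0}(\cdot)$ to hold on the support. This is the missing idea in your proposal: the extra dimension absorbs the non-convolution structure, and the fixed $g$ absorbs the lack of higher smoothness of $f$.
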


As a corollary of Theorem \ref{final theorem} we extend our main result from \cite{CPSZ2} to normal operators, see Corollary \ref{Cor=Normal}, which substantially improves corresponding results in \cite{BiSo1}, \cite{CMPS} (see also \cite{APPS}). This extension is based on a strengthened version of the transference principle from \cite{CPSZ2} as explained in Section \ref{Sect=IntSpec}. In the text we prove a somewhat stronger result than Theorem \ref{final theorem} in the terms of double operator integrals (see the next section for the definitions), of which the main Theorem \ref{final theorem} is a corollary.

\begin{thm}\label{doi bound} For every Lipschitz function $f:\mathbb{R}^d\to\mathbb{R}$ and for every collection $\mathbf{A}=\{A_k\}_{k=1}^d$ of commuting self-adjoint operator in a semifinite von Neumann algebra $\mathcal{M},$ we have
$$\|T_{f_{k_0}}^{\mathbf{A},\mathbf{A}}(V)\|_{1,\infty}\leq c(d)\|\nabla(f)\|_{\infty}\|V\|_1,\quad V\in(L_1\cap L_2)(\mathcal{M}),$$
for every $1 \leq k_0 \leq d$.
Here, $f_{k_0}$ is defined by \eqref{fk def}.
\end{thm}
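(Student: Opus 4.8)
The plan is to realise the map $V\mapsto T^{\mathbf{A},\mathbf{A}}_{f_{k_0}}(V)$ as a non-commutative Calder\'on-Zygmund operator with operator-valued kernel and then invoke the weak type $(1,1)$ bound for such operators, in the spirit of \cite{CPSZ2} (which relies on \cite{Parcet}, with the simplifications of \cite{Cad}), while tracking throughout the dependence on the dimension $d$. As a first reduction, since $A_1,\dots,A_d$ are bounded their joint spectrum lies in a fixed cube $Q\subset\mathbb{R}^d$, so $f$ may be altered outside $Q$ without affecting $f(\mathbf{A})$; combining this with a mollification $f\ast\varphi_\varepsilon$ and using the lower semicontinuity of $\|\cdot\|_{1,\infty}$ along the resulting convergence of $T^{\mathbf{A},\mathbf{A}}_{f_{k_0}}(V)$, one reduces to $f\in C_c^\infty(\mathbb{R}^d)$ without increasing $\|\nabla f\|_\infty$.

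For such $f$ the partial divided difference $f_{k_0}$ of \eqref{fk def} satisfies, by the fundamental theorem of calculus,
$$f_{k_0}(\lambda,\mu)=\int_0^1(\partial_{k_0}f)\big(\lambda_1,\dots,\lambda_{k_0-1},(1-t)\mu_{k_0}+t\lambda_{k_0},\mu_{k_0+1},\dots,\mu_d\big)\,dt .$$
Inserting Fourier inversion for $\partial_{k_0}f$, whose Fourier transform is now Schwartz, and factoring the exponential into a character in $\lambda$ times a character in $\mu$, turns the double operator integral into the absolutely convergent superposition
$$T^{\mathbf{A},\mathbf{A}}_{f_{k_0}}(V)=\int_0^1\!\!\int_{\mathbb{R}^d}\widehat{\partial_{k_0}f}(\xi)\,u_{\xi,t}\,V\,w_{\xi,t}\;d\xi\,dt ,$$
where $u_{\xi,t},w_{\xi,t}$ are unitaries of the form $\exp\big(i\sum_j c_j(\xi,t)A_j\big)$. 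At this level the triangle inequality yields only the factor $\|\widehat{\partial_{k_0}f}\|_{L^1(\mathbb{R}^d)}$, which is \emph{not} controlled by $\|\nabla f\|_\infty$; hence no strong type $(1,1)$ bound is available this way and cancellation must be exploited.

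The crux is to reorganise the superposition above, after a change of variables absorbing the parameter $t$, as a singular integral operator $V\mapsto\int k(x,y)\,V\,dy$ on $L_1(\mathcal{M})$ whose kernel $k(x,y)$ takes values in the commuting algebra of left and right multiplications by functions of $\mathbf{A}$, and to verify the size estimate $\|k(x,y)\|\lesssim_d\|\nabla f\|_\infty\,|x-y|^{-d}$ together with the H\"ormander regularity estimate, with constants depending only on $d$. The requisite $L_2\to L_2$ bound is immediate, as $T^{\mathbf{A},\mathbf{A}}_{f_{k_0}}$ is bounded on $L_2(\mathcal{M})$ with norm at most $\|f_{k_0}\|_\infty\le\|\nabla f\|_\infty$ by the elementary $L_2$-theory of double operator integrals. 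Feeding these estimates into the non-commutative weak type $(1,1)$ inequality for Calder\'on-Zygmund operators from \cite{Parcet} (alternatively \cite{Cad}), in the form valid for $d$-dimensional operator-valued kernels over an arbitrary semifinite $\mathcal{M}$, gives $\|T^{\mathbf{A},\mathbf{A}}_{f_{k_0}}(V)\|_{1,\infty}\le c(d)\|\nabla f\|_\infty\|V\|_1$ for $V\in(L_1\cap L_2)(\mathcal{M})$, with $c(d)$ the resulting Calder\'on-Zygmund constant; removing the regularisation of the first step finishes the proof.

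The main obstacle is this last step: showing that the divided-difference double operator integral genuinely fits the non-commutative Calder\'on-Zygmund template, i.e. explicitly building the operator-valued kernel out of the Fourier data of $f$ and proving its size and H\"ormander bounds with constants that depend only on $d$ and linearly on $\|\nabla f\|_\infty$. This is precisely the step that upgrades the too-weak pointwise bound $\|f_{k_0}\|_\infty\le\|\nabla f\|_\infty$ to honest singular-integral cancellation; handling the $d-1$ ``frozen'' coordinate directions uniformly, and keeping the dimensional dependence transparent, is the technical heart of the argument.
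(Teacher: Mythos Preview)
Your proposal has two genuine problems.

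First, the integral formula you write for $f_{k_0}$ is not the function defined in \eqref{fk def}. The paper's $f_{k_0}(\lambda,\mu)$ equals $(f(\lambda)-f(\mu))(\lambda_{k_0}-\mu_{k_0})/|\lambda-\mu|^2$, whereas your integral computes the one-variable divided difference $\big(f(\lambda_1,\dots,\lambda_{k_0},\mu_{k_0+1},\dots,\mu_d)-f(\lambda_1,\dots,\lambda_{k_0-1},\mu_{k_0},\dots,\mu_d)\big)/(\lambda_{k_0}-\mu_{k_0})$. These do not agree already for $d=2$. Everything downstream (the Fourier superposition, the operator-valued kernel, the size and H\"ormander bounds) is built on the wrong symbol.

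Second, and more structurally, the step you flag as the ``crux'' is not carried out, and it is unclear that it can be. The double operator integral $T^{\mathbf{A},\mathbf{A}}_{f_{k_0}}$ is a Schur-type multiplier in the spectral variables of $\mathbf{A}$; there is no natural $\mathbb{R}^d$-valued spatial variable on which to write a kernel $k(x,y)$ and check Calder\'on--Zygmund size and regularity with constants depending only on $\|\nabla f\|_\infty$. The symbol $f_{k_0}(\lambda,\mu)$ is not a function of $\lambda-\mu$, so no convolution structure is available, and the ``commuting algebra of left and right multiplications by functions of $\mathbf{A}$'' does not supply one.

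The paper resolves exactly this obstruction by a different mechanism. It introduces an \emph{extra} coordinate, working on $\mathbb{R}^{d+1}$, and chooses a single smooth homogeneous function $g$ of degree $0$ (depending only on $d$ and $k_0$, not on $f$) with the property that $g(\mathbf{i}-\mathbf{j},f(\mathbf{i})-f(\mathbf{j}))=f_{k_0}(\mathbf{i},\mathbf{j})$ whenever $f$ is a contraction and $\mathbf{i},\mathbf{j}\in\mathbb{Z}^d$. All the $f$-dependence is then pushed into an isometric embedding $I:L_1(\mathcal{M})\to L_1(\mathcal{M}\otimes L_\infty(\mathbb{T}^{d+1}))$ built from a unitary $U_f$, and one has the exact intertwining $S\circ I=I\circ T^{\mathbf{A},\mathbf{A}}_{f_{k_0}}$ with $S=1\otimes g(\nabla_{\mathbb{T}^{d+1}})$. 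The weak $(1,1)$ bound for $S$ comes from a de Leeuw-type transference of the non-commutative Calder\'on--Zygmund theorem for the \emph{fixed} homogeneous symbol $g$; the constant $c(d)$ is simply the Calder\'on--Zygmund constant of $g$. The passage from integer spectra to general bounded $\mathbf{A}$ is then a discretisation and Fatou argument. The point is that the Calder\'on--Zygmund operator in the paper is \emph{independent of $f$}; your scheme makes the kernel $f$-dependent and then has to recover uniform bounds, which is precisely the difficulty the paper's $(d{+}1)$-dimensional trick is designed to avoid.
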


Our proofs are based on weak type versions of de Leeuw theorems \cite{de Leeuw} and a delicate analysis of homogeneous Calder\'on--Zygmund operators.

\section{Preliminaries}

\subsection{General notation}
Throughout the paper $d$ is an integer $\geq 1$. Our main result, Theorem \ref{final theorem}, concerns $d$-tuples of commuting self-adjoint operators, whereas the proofs involve an analysis on $\mathbb{R}^{d+1}$ and $\mathbb{T}^{d+1}$.  We use
\[
\nabla=(\partial_1,\ldots,\partial_{d+1})=\frac{1}{i} (\frac{\partial}{\partial t_1}, \ldots, \frac{\partial}{\partial t_{d+1}})
\]
 for the gradient, which is an unbounded operator on $L_2(\mathbb{R}^{d+1})$. We use $\mathcal{F}$ for the Fourier transform $\mathcal{F}(f)(t) = (2\pi)^{-(d+1)/2}\int_{\mathbb{R}^{d+1}} f(s) e^{-i \langle s, t \rangle} ds$.

Let $\mathcal{M}$ be a semifinite von Neumann algebra equipped with a faithful normal semifinite trace $\tau.$ In this paper, we always presume that $\mathcal{M}$ is represented on a separable Hilbert space.

A (closed and densely defined) operator $x$ affiliated with $\mathcal{M}$ is called $\tau-$measurable if $\tau(E_{|x|}(s,\infty))<\infty$ for sufficiently large $s.$ We denote the set of all $\tau-$measurable operators by $S(\mathcal{M},\tau).$ For every $x\in S(\mathcal{M},\tau),$ we define its singular value function $\mu(A)$ by setting
$$\mu(t,x)=\inf\{\|x(1-p)\|_{\infty}:\quad \tau(p)\leq t\}.$$
Equivalently, for positive self-adjoint operators $x\in S(\mathcal{M},\tau),$ we have
$$n_x(s)=\tau(E_x(s,\infty)),\quad \mu(t,x)=\inf\{s: n_x(s)<t\}.$$
We have for $x,y \in S(\mathcal{M},\tau)$ (see e.g. \cite[Corollary 2.3.16]{LSZ})
\begin{equation}\label{triangle svf}
\mu(t+s,x+y)\leq\mu(t,x)+\mu(s,y),\quad t,s>0.
\end{equation}
Let $S((0,\infty) \times (0, \infty)) = S(L_\infty((0,\infty) \times (0, \infty)), \int\:  ds)$ where the integral is the Lebesgue integral.
Recall that every $x \in S(\mathcal{M},\tau), y\in \mathcal{M}$ such $\mu(x) \otimes \mu(y) \in S((0,\infty) \times (0, \infty))$ we have (see \cite[Eqn. (4.1)]{CPSZ2} for the proof),
\begin{equation}\label{mu tensor}
\mu(x\otimes y)=\mu(\mu(x)\otimes\mu(y)),
\end{equation}
For a measurable function $f$ on $\mathbb{R}^{d+1}$ we use $\sigma_l(f)(t) = f(l^{-1}t), l>0$. Note that
\begin{equation}\label{Eqn=CovariantTrans}
\Vert \sigma_l(f) \Vert_1 = l^{d+1} \Vert f \Vert_1, \quad \Vert \sigma_l(f) \Vert_2 = l^{(d+1)/2} \Vert f \Vert_2,
\end{equation}
where the norms are with respect to the Lebesgue measure on $\mathbb{R}^{d+1}$.

\subsection{Non-commutative spaces}
For $1\leq p<\infty$ we set,
$$L_p(\mathcal{M})=\{x\in S(\mathcal{M},\tau):\ \tau(|x|^p)<\infty\},\quad \|x\|_p=(\tau(|x|^p))^{\frac1p}.$$
The Banach spaces $(L_p(\mathcal{M}),\|\cdot\|_p)$, $1\leq p<\infty$ are separable. Define the space $L_{1,\infty}(\mathcal{M})$ by setting
$$L_{1,\infty}(\mathcal{M})=\{x\in S(\mathcal{M},\tau):\ \sup_{t>0}t\mu(t,x)<\infty\}.$$
We equip $L_{1,\infty}(\mathcal{M})$ with the functional $\|\cdot\|_{1,\infty}$ defined by the formula
$$\|x\|_{1,\infty}=\sup_{t>0}t\mu(t,x),\quad x\in L_{1,\infty}(\mathcal{M}).$$
It follows from \eqref{triangle svf} that
\[
\begin{split}
&\|x+y\|_{1,\infty}=\sup_{t>0}t\mu(t,x+y)\leq\sup_{t>0}t(\mu(\frac{t}{2},x)+\mu(\frac{t}{2},y)) \\
\leq & \sup_{t>0}t\mu(\frac{t}{2},x)+\sup_{t>0}t\mu(\frac{t}{2},y)=2\|x\|_{1,\infty}+2\|y\|_{1,\infty}.
\end{split}
\]
In particular, $\|\cdot\|_{1,\infty}$ is a quasi-norm. The quasi-normed space $(L_{1,\infty}(\mathcal{M}),\|\cdot\|_{1,\infty})$ is, in fact, quasi-Banach (see e.g. \cite[Section 7]{KS} or \cite{Sind}). Naturally we set $L_{1, \infty}(\mathbb{R}^{d+1}) = L_{1, \infty}(L_\infty(\mathbb{R}^{d+1}))$ and $L_{1, \infty}(\mathbb{T}^{d+1}) = L_{1, \infty}(L_\infty(\mathbb{T}^{d+1}))$.


\subsection{Weak type inequalities for Calder\'on-Zygmund operators}
Parcet \cite{Parcet} proved a non-commutative extension of  Calder\'on-Zygmund theory.

Let $K$ be a tempered distribution on $\mathbb{R}^{d+1}$ which we refer to as the {\it convolution kernel}. We let $W_K$ be the associated Calder\'on-Zygmund operator, formally given by $f \mapsto K \ast f.$ In what follows, we only consider tempered distributions having local values (that is, which can be identified with measurable functions $K:\mathbb{R}^{d+1}\rightarrow\mathbb{C}$).

Let $\mathcal{M}$ be a semi-finite von Neumann algebra with normal, semi-finite, faithful trace $\tau.$ The operator $1\otimes W_K$ can, under suitable conditions, be defined as a non-commutative Calder\'on-Zygmund operator by letting it act on the second tensor leg of $L_1(\mathcal{M})\widehat{\otimes}L_1(\mathbb{R}^{d+1}).$ The following theorem in particular gives a sufficient condition for such an operator to act from $L_1$ to $L_{1,\infty}.$ Its proof was improved/shortened very recently by Cadilhac \cite{Cad}.

\begin{thm}[\cite{Cad}, \cite{Parcet}]\label{Thm=Parcet}
Let $K:\mathbb{R}^{d+1} \backslash \{0\} \rightarrow \mathbb{C}$ be a kernel satisfying the conditions
\begin{equation}\label{parcet conditions}
|K|(t)\leq\frac{{\rm const}}{|t|^{d+1}},\quad |\nabla K|(t)\leq\frac{{\rm const}}{|t|^{d+2}}.
\end{equation}
Let $\mathcal{M}$ be a semi-finite von Neumann algebra. If $W_K\in B(L_2(\mathbb{R}^{d+1})),$ then the operator $1\otimes W_K$ defines a bounded map from $L_1(\mathcal{M}\otimes L_\infty(\mathbb{R}^{d+1}))$ to $L_{1,\infty}(\mathcal{M}\otimes L_\infty(\mathbb{R}^{d+1})).$
\end{thm}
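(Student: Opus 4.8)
The plan is to deduce the theorem from a non-commutative Calder\'on--Zygmund decomposition, in the spirit of martingale theory. Write $\mathcal{N}=\mathcal{M}\otimes L_\infty(\mathbb{R}^{d+1})$, equipped with the trace $\tau\otimes\int\,dt$, and set $T=1\otimes W_K$. Splitting a generic $f\in L_1(\mathcal{N})$ into (at most four) positive parts and using that $\|\cdot\|_{1,\infty}$ is a quasi-norm, it suffices to prove $\|Tf\|_{1,\infty}\lesssim\|f\|_1$ for $0\leq f\in L_1(\mathcal{N})$; fixing $\lambda>0$, we must bound the distribution function of $|Tf|$ at height $\lambda$ by $\lambda^{-1}\|f\|_1$.

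First I would set up the dyadic filtration: let $\mathcal{E}_k=\mathrm{id}_{\mathcal{M}}\otimes E_k$ be the conditional expectations onto $\mathcal{N}_k=\mathcal{M}\otimes(L_\infty(\mathbb{R}^{d+1}))_k$, where $(L_\infty(\mathbb{R}^{d+1}))_k$ is generated by the dyadic cubes of generation $k$, and put $f_k=\mathcal{E}_k f$. Applying Cuculescu's construction to the martingale $(f_k)_k$ at height $\lambda$ produces a decreasing sequence of projections $q_k\in\mathcal{N}_k$ such that, with $q:=\bigwedge_k q_k$, one has $q_k f_k q_k\leq\lambda q_k$ and $(\tau\otimes\int)(1-q)\leq\lambda^{-1}\|f\|_1$. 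Setting $p_k=q_{k-1}-q_k$, so that $q+\sum_k p_k=1$, I would split $f=g+b$ into the Parcet good and bad parts, with $g$ collecting the ``diagonal'' pieces (essentially $qfq$ and $\mathcal{E}_k(p_k f p_k)$) and $b=\sum_k b_k$ the off-diagonal remainder; the construction is arranged so that (i) $\|g\|_2^2\lesssim\lambda\|f\|_1$, and (ii) each $b_k$ has vanishing generation-$k$ conditional expectation and is supported, in the dyadic sense, on a region $\Omega_k$ with $\sum_k|\Omega_k|\lesssim\lambda^{-1}\|f\|_1$.

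The good part is then routine: since $W_K\in B(L_2(\mathbb{R}^{d+1}))$, Fubini gives $\|Tg\|_2\lesssim\|g\|_2$, and Chebyshev together with (i) bounds the distribution function of $|Tg|$ at height $\lambda$ by $\lambda^{-2}\|Tg\|_2^2\lesssim\lambda^{-1}\|f\|_1$.

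The main obstacle is the bad part. Classically one estimates $W_K b$ away from the dilated supports $\Omega_k^{*}$ using the H\"ormander regularity $\int_{|t|>2|y|}|K(t-y)-K(t)|\,dt\lesssim 1$, a consequence of the gradient bound in \eqref{parcet conditions}, while discarding the region $\bigcup_k\Omega_k^{*}$, which has small measure by (ii). In the non-commutative setting this fails naively because $Tb$ need not be ``localised outside the cubes'' --- the operator coefficients have no supports. The resolution, and the technical heart of Parcet's theorem, is the pseudo-localization principle: decomposing $W_K$ dyadically in scale and running a Cotlar-type almost-orthogonality argument (which uses both estimates in \eqref{parcet conditions}), one shows that the $L_2$-mass of $Tb_k$ at dyadic distance $s$ from $\Omega_k$ decays geometrically in $s$; summing over $s$ and $k$ and combining with the trivial bound on $\bigcup_k\Omega_k^{*}$ yields the weak type bound for $b$. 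To keep the argument short I would instead follow Cadilhac's recent streamlining of this bad-part estimate; assembling the good and bad contributions via the quasi-triangle inequality for $\|\cdot\|_{1,\infty}$ then completes the proof.
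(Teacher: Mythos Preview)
The paper does not give its own proof of this theorem: it is stated as a known result and attributed directly to \cite{Cad} and \cite{Parcet}, with no argument supplied beyond the citations. Your sketch is an accurate high-level summary of precisely those proofs --- Cuculescu projections producing the non-commutative good/bad decomposition, the $L_2$ bound handling the good part, and Parcet's pseudo-localization (or Cadilhac's shorter variant) handling the bad part --- so there is nothing to compare: you have simply unpacked what the paper leaves as a black-box reference. If this were to stand as an actual proof rather than a cited result, you would of course need to supply the details you defer (the precise form of $g$ and $b$, the almost-orthogonality estimate), but as a proof-by-citation your outline matches the paper's treatment.
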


We need a very special case of Theorem \ref{Thm=Parcet}.

\begin{thm}\label{parcet corollary} If $g\in L_{\infty}(\mathbb{R}^{d+1})$ is a smooth homogeneous function, then $1\otimes g(\nabla)$ defines a bounded map from $L_1(\mathcal{M}\otimes L_\infty(\mathbb{R}^{d+1}))$ to $L_{1,\infty}(\mathcal{M}\otimes L_\infty(\mathbb{R}^{d+1})).$
\end{thm}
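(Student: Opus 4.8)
The plan is to verify that the kernel $K$ of the operator $g(\nabla)$ satisfies the Calder\'on--Zygmund conditions \eqref{parcet conditions} and that $W_K = g(\nabla)$ is bounded on $L_2(\mathbb{R}^{d+1})$, so that Theorem \ref{Thm=Parcet} applies directly. The $L_2$-boundedness is immediate: since $g(\nabla)$ acts on the Fourier side as multiplication by $g$ (up to the normalization of $\mathcal{F}$), and $g \in L_\infty(\mathbb{R}^{d+1})$, Plancherel's theorem gives $\|g(\nabla)\|_{B(L_2)} = \|g\|_\infty < \infty$. So the content of the argument is the kernel estimate.

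First I would make precise what ``smooth homogeneous'' means: $g$ is homogeneous of degree $0$ (so that $g \in L_\infty$ forces boundedness rather than growth) and smooth on $\mathbb{R}^{d+1} \setminus \{0\}$. The kernel $K$ is $\mathcal{F}^{-1}(g)$ in the distributional sense. The key classical fact is that the inverse Fourier transform of a degree-$0$ homogeneous smooth function (away from the origin) is, modulo a constant times $\delta_0$, a function that is homogeneous of degree $-(d+1)$ and smooth away from the origin; this is the standard theory of Calder\'on--Zygmund kernels associated to homogeneous multipliers (see e.g.\ Grafakos \cite{Grafakos}). Granting this, $|K|(t) \le \mathrm{const} \cdot |t|^{-(d+1)}$ follows from homogeneity of degree $-(d+1)$ together with boundedness of $K$ on the unit sphere, and $|\nabla K|(t) \le \mathrm{const}\cdot|t|^{-(d+2)}$ follows since $\nabla K$ is then homogeneous of degree $-(d+2)$ and smooth away from the origin, hence bounded on the unit sphere. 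The $\delta_0$ term only contributes a bounded multiple of the identity, which maps $L_1$ into $L_1 \subset L_{1,\infty}$ and hence can be absorbed.

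Then I would invoke Theorem \ref{Thm=Parcet} with this $K$: the hypotheses \eqref{parcet conditions} and $W_K \in B(L_2(\mathbb{R}^{d+1}))$ are met, so $1 \otimes W_K = 1 \otimes g(\nabla)$ is bounded from $L_1(\mathcal{M} \otimes L_\infty(\mathbb{R}^{d+1}))$ to $L_{1,\infty}(\mathcal{M} \otimes L_\infty(\mathbb{R}^{d+1}))$, which is the claim.

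The main obstacle, such as it is, is the regularity bookkeeping in the claim that $\mathcal{F}^{-1}(g)$ decomposes as $c\,\delta_0$ plus a genuine function satisfying the two pointwise bounds; one has to check that the singularity of $K$ at the origin is exactly of the stated homogeneous type and produces no worse-than-$|t|^{-(d+1)}$ behavior, and that differentiation does not destroy the decay. This is classical but needs to be stated carefully for the specific degree of homogeneity at hand; everything else is a direct citation of Theorem \ref{Thm=Parcet} and Plancherel.
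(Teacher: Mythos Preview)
Your proposal is correct and follows essentially the same route as the paper: reduce to the classical fact that the inverse Fourier transform of a smooth degree-$0$ homogeneous symbol is (away from a multiple of $\delta_0$) a smooth function homogeneous of degree $-(d+1)$, read off the size and gradient bounds \eqref{parcet conditions}, and invoke Theorem \ref{Thm=Parcet}. The only cosmetic difference is that the paper disposes of the $\delta_0$ contribution by first subtracting the spherical mean of $g$ (so that Stein's theorem applies to a genuinely mean-zero symbol), whereas you split it off as $c\,\delta_0$ on the kernel side; these are equivalent manoeuvres.
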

\begin{proof} Without loss of generality, the function $g$ is mean zero on the sphere $\mathbb{S}^d$ (this can be always achieved by subtracting a constant from $g$). By Theorem 6 on p.75 in \cite{Stein} and using that $g$ has mean 0, we have $g(\nabla)=W_K,$ where $K=\mathcal{F}^{-1}(g)$ is a smooth homogeneous function of degree $-d-1$. The gradient of the function $K$ is a smooth homogeneous function of degree $-d-2$. These conditions guarantee that \eqref{parcet conditions} holds for $K$ and by  Theorem \ref{Thm=Parcet}, the assertion follows.
\end{proof}

In Section \ref{Sect=DeLeeuw}, we prove the following compact analogue of Theorem \ref{parcet corollary}. The transference arguments in Section \ref{Sect=IntSpec} require such a compact form.  We let $\nabla_{\mathbb{T}^{d+1}}$ be the gradient operator on the $(d+1)$-torus.

\begin{thm}\label{axis torus} If $g$ is a smooth homogeneous function on $\mathbb{R}^{d+1},$ then the operator $1\otimes g(\nabla_{\mathbb{T}^{d+1}}):L_2(\mathcal{M}\otimes L_{\infty}(\mathbb{T}^{d+1}))\to L_2(\mathcal{M}\otimes L_{\infty}(\mathbb{T}^{d+1}))$ admits a bounded extension acting from $L_1(\mathcal{M}\otimes L_{\infty}(\mathbb{T}^{d+1}))$ to $L_{1,\infty}(\mathcal{M}\otimes L_{\infty}(\mathbb{T}^{d+1})).$
\end{thm}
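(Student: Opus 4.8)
The plan is to transfer Theorem \ref{parcet corollary} from $\mathbb{R}^{d+1}$ to $\mathbb{T}^{d+1}$ by a periodization/de Leeuw type argument, exploiting the homogeneity of $g$ to control the relevant constants. The torus $\mathbb{T}^{d+1}$ will be realized as $(\mathbb{R}/2\pi L\mathbb{Z})^{d+1}$ after rescaling, and the key observation is that $g(\nabla_{\mathbb{T}^{d+1}})$ acts on the Fourier side by the bounded sequence $(g(n))_{n \in \mathbb{Z}^{d+1}}$ (with $g(0)$ defined as, say, $0$ after subtracting a constant as in the proof of Theorem \ref{parcet corollary}), whereas $g(\nabla)$ acts by the bounded function $g(\xi)$. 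Because $g$ is homogeneous of degree $0$ and smooth away from the origin, sampling it on the integer lattice and then rescaling by $\sigma_l$ does not distort it; this is precisely the feature that lets a de Leeuw restriction theorem run in the weak type $(1,1)$ category.

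First I would set up the weak type de Leeuw machinery. On the real line case this is classical, but here we need the non-commutative, weak $(1,1)$ version: if $1 \otimes m(\nabla)$ is bounded $L_1(\mathcal{M} \otimes L_\infty(\mathbb{R}^{d+1})) \to L_{1,\infty}(\mathcal{M} \otimes L_\infty(\mathbb{R}^{d+1}))$ with a suitable continuity/regularity of the multiplier $m$ at lattice points, then the periodized multiplier $1 \otimes m|_{\mathbb{Z}^{d+1}}(\nabla_{\mathbb{T}^{d+1}})$ is bounded $L_1(\mathcal{M} \otimes L_\infty(\mathbb{T}^{d+1})) \to L_{1,\infty}(\mathcal{M} \otimes L_\infty(\mathbb{T}^{d+1}))$, with the same constant up to an absolute factor. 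I expect this to be proved (in Section \ref{Sect=DeLeeuw}, which the theorem is announced for) by the standard dilation trick: replace $m$ by $\sigma_l(m)$, whose restriction to $\mathbb{Z}^{d+1}$ converges as $l \to \infty$ to sampling of $m$ on a fine lattice, and transfer using that the weak $(1,1)$ bound for $1 \otimes \sigma_l(m)(\nabla)$ is dilation-invariant by \eqref{Eqn=CovariantTrans}, together with a Fejér-type smoothing to pass from the periodization of a Schwartz function to the genuine Fourier series multiplier. For $m = g$ homogeneous of degree $0$ this is especially clean because $\sigma_l(g) = g$ identically, so no limiting argument in $l$ is even needed to control the symbol — the only role of the dilation is to make the period large compared to the kernel's effective support.

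The two points requiring care are: (a) the kernel $K = \mathcal{F}^{-1}(g)$ is only a tempered distribution (principal value type, homogeneous of degree $-d-1$), not an integrable function, so the periodization $\sum_{n} K(\cdot + 2\pi L n)$ must be interpreted carefully — one splits $K = K\chi_{|t| \le \varepsilon} + K\chi_{|t| > \varepsilon}$ and handles the singular near-diagonal part using the Calderón–Zygmund estimates \eqref{parcet conditions} exactly as in the proof of Theorem \ref{Thm=Parcet}, while the tail sums over the lattice converge absolutely thanks to the decay $|K|(t) \le \mathrm{const}\, |t|^{-d-1}$ combined with the cancellation $\int_{\mathbb{S}^d} g\, d\sigma = 0$; and (b) the $L_2$-boundedness of $g(\nabla_{\mathbb{T}^{d+1}})$ on the torus, which I would verify directly from $g \in L_\infty(\mathbb{R}^{d+1})$ since it acts by the bounded sequence $(g(n))_n$, giving an operator of norm $\le \|g\|_\infty$ on $L_2(\mathbb{T}^{d+1})$ and hence on the second leg of $L_2(\mathcal{M} \otimes L_\infty(\mathbb{T}^{d+1}))$.

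The main obstacle I anticipate is part (a): making the de Leeuw transference rigorous in the weak $(1,1)$ setting when the model kernel is a homogeneous CZ kernel rather than an $L^1$ function. Unlike the $L_p$, $1 < p < \infty$, case where one has the multiplier norm directly at one's disposal and Marcinkiewicz-type criteria, here we must reprove a transference statement at the endpoint, and quasi-triangle inequality issues for $\|\cdot\|_{1,\infty}$ (the constant $2$ in the quasi-norm estimate above) mean the usual averaging/limiting arguments need to be arranged so that only finitely many quasi-triangle applications occur, or so that one works with an equivalent genuine norm on $L_{1,\infty}$ up to the separability hypotheses. I would therefore organize the proof of Theorem \ref{axis torus} so that the homogeneity of $g$ does the heavy lifting — reducing everything to a single application of Theorem \ref{parcet corollary} on $\mathbb{R}^{d+1}$ composed with a periodization map whose boundedness $L_1(\mathbb{T}^{d+1}) \to L_1(\mathbb{R}^{d+1})$-into-a-fundamental-domain and $L_{1,\infty}(\mathbb{R}^{d+1}) \to L_{1,\infty}(\mathbb{T}^{d+1})$ is checked by hand, all tensored trivially with $\mathcal{M}$ — rather than attempting a general de Leeuw theorem for arbitrary weak $(1,1)$ multipliers.
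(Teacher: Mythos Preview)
Your high-level strategy --- transfer the weak $(1,1)$ bound from $\mathbb{R}^{d+1}$ to $\mathbb{T}^{d+1}$ by a de Leeuw restriction argument, exploiting homogeneity so that $\sigma_l(g)=g$ --- matches the paper's. But the concrete mechanism you propose has a genuine gap, and the paper's fix is an idea you do not mention.

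The gap is in your point (a). The kernel $K=\mathcal{F}^{-1}(g)$ is homogeneous of degree $-(d+1)$, so the tail of the lattice sum $\sum_{n\in\mathbb{Z}^{d+1}}K(t+2\pi n)$ is, in absolute value, comparable to $\sum_{n\neq0}|n|^{-(d+1)}$, and this is the \emph{critical} exponent in $d+1$ dimensions: the sum diverges logarithmically. The sphere cancellation $\int_{\mathbb{S}^d}K=0$ does not by itself promote this to absolute convergence; at best it gives conditional convergence after a careful dyadic grouping, and turning that into a quantitative torus kernel estimate that feeds back into a weak $(1,1)$ bound is not the ``check by hand'' you suggest. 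Your fallback at the end --- embed $L_1(\mathbb{T}^{d+1})$ into $L_1(\mathbb{R}^{d+1})$ by restricting to one fundamental domain --- fails for the opposite reason: the truncated function lies in $L_1(\mathbb{R}^{d+1})$, but convolution with $K$ on $\mathbb{R}^{d+1}$ then no longer agrees with the torus multiplier $g(\nabla_{\mathbb{T}^{d+1}})$, because the kernel sees across the boundary of the box.

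What the paper does instead is to periodize the \emph{function} and damp it with a Gaussian: for a trigonometric polynomial $W=\sum_{k\in\mathscr{A}}W_k\otimes e_k$ it works with $\mathrm{per}(W)\cdot(1\otimes G_l)\in L_1(\mathcal{M}\otimes L_\infty(\mathbb{R}^{d+1}))$. Lemmas \ref{first periodic lemma} and \ref{second periodic lemma} show that this gives an asymptotic isometry from $L_1$ of the torus into $L_1$ of $\mathbb{R}^{d+1}$ and an asymptotic lower bound on $L_{1,\infty}$. The intertwining you are after is Lemma \ref{fourth convergence lemma}: $\|(g(\nabla))(G_le_k)-g(k)G_le_k\|_{1,\infty}\to0$ for each fixed $k\neq0$, proved by writing $g=g\phi+g(1-\phi)$ with $\phi$ a bump near $0$, so that $g(1-\phi)$ is globally smooth and handled by a Taylor/Sobolev estimate (Lemmas \ref{first convergence lemma}--\ref{third convergence lemma}), while the singular piece $(g\phi)(\nabla)$ is controlled by one application of the weak $(1,1)$ bound for $g(\nabla)$ on $\mathbb{R}^{d+1}$ composed with $\phi(\nabla)$, whose output on $G_le_k$ tends to $0$ in $L_1$. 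This is also where your quasi-triangle concern is resolved: only $|\mathscr{A}|$ many quasi-triangle inequalities are used, for a fixed finite $\mathscr{A}$, and the passage to general $W$ is by Fej\'er density afterwards. The Gaussian weight $G_l$ is the missing ingredient in your plan; without it there is no honest element of $L_1(\mathcal{M}\otimes L_\infty(\mathbb{R}^{d+1}))$ to which Theorem \ref{parcet corollary} can be applied while still recovering the torus multiplier in the limit.
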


\begin{rmk}\label{Rmk=DeLeeuw}
Theorem \ref{axis torus} should be understood as a de Leeuw theorem in the following sense. Assume for simplicity that $\mathcal{M} = \mathbb{C}$. $g(\nabla)$ of Theorem \ref{parcet corollary} is a Fourier multiplier with symbol $g$. $g(\nabla_{\mathbb{T}^{d+1}})$ is the Fourier multiplier on $L_2(\mathbb{T}^{d+1})$ whose symbol is the restriction of $g$ to $\mathbb{Z}^{d+1}$.  Theorem \ref{axis torus} then shows that $g\vert_{\mathbb{Z}^{d+1}}$ is the symbol of a bounded multiplier $L_1(\mathbb{T}^{d+1}) \rightarrow L_{1, \infty}(\mathbb{T}^{d+1})$. This is a weak $(1,1)$ version of de Leeuw's theorem \cite{de Leeuw}.
\end{rmk}

\subsection{Double operator integrals}\label{doi subsection} Let $\mathbf{A}=\{A_k\}_{k=1}^d$ be a collection of {\it commuting} self-adjoint operators affiliated with $\mathcal{M}.$ Consider projection valued measures on $\mathbb{R}^d$ acting on the Hilbert space $L_2(\mathcal{M})$ by the formulae
$$x\to\Big(\prod_{k=1}^dE_{A_k}(\mathcal{B}_k)\Big)x,\quad x\to x\Big(\prod_{k=1}^dE_{A_k}(\mathcal{C}_k)\Big),\quad x\in L_2(\mathcal{M}).$$
These spectral measures commute and, hence (see Theorem V.2.6 in \cite{BirSol}), there exists a countably additive (in the strong operator topology) projection-valued measure $\nu$ on $\mathbb{R}^2$ acting on the Hilbert space $L_2(\mathcal{M})$ by the formula
\begin{equation}\label{birsol spectral measure}
\nu(\mathcal{B}_1\times\cdots\times\mathcal{B}_d\times\mathcal{C}_1\times\cdots\times\mathcal{C}_d):x\to \Big(\prod_{k=1}^dE_{A_k}(\mathcal{B}_k)\Big)x\Big(\prod_{k=1}^dE_{A_k}(\mathcal{C}_k)\Big),\quad x\in L_2(\mathcal{M}).
\end{equation}
Integrating a bounded Borel function $\xi$ on $\mathbb{R}^{2d}$ with respect to the measure $\nu$ produces a bounded operator acting on the Hilbert space $L_2(\mathcal{M}).$ In what follows, we denote the latter operator by $T_{\xi}^{\mathbf{A},\mathbf{A}}$ (see also \cite[Remark 3.1]{PSW}).

In the special case when $A_k$ are bounded and ${\rm spec}(A_k)\subset\mathbb{Z},$ we have
\begin{equation}\label{doi special}
T_{\xi}^{\mathbf{A},\mathbf{A}}(V)=\sum_{\mathbf{i},\mathbf{j}\in\mathbb{Z}^d}\xi(\mathbf{i},\mathbf{j})\Big(\prod_{k=1}^dE_{A_k}(\{i_k\})\Big)V\Big(\prod_{k=1}^dE_{A_k}(\{j_k\})\Big).
\end{equation}
We are mostly interested in the case $\xi=f_k$ for a Lipschitz function $f.$ Here, for $1 \leq k \leq d$ and $\lambda, \mu \in \mathbb{R}^{d}$,
\begin{equation}\label{fk def}
f_k(\lambda,\mu)=
\begin{cases}
\frac{(f(\lambda)-f(\mu))(\lambda_k-\mu_k)}{\langle\lambda-\mu,\lambda-\mu\rangle},\quad \lambda\neq\mu\\
0,\quad  \lambda=\mu.
\end{cases}
\end{equation}

\section{A de Leeuw type theorem for Calder\'on-Zygmund operators}\label{Sect=DeLeeuw}

In this section we collect de Leeuw type results (c.f. \cite{de Leeuw}) needed in the subsequent proofs. The main result is Theorem  \ref{axis torus}. This theorem should be understood as a restriction theorem for (homogeneous) Fourier multipliers, see Remark \ref{Rmk=DeLeeuw}.

The strategy of the proof is as follows. One  finds an asymptotic embedding of $L_1(\mathbb{T}^{d+1})$ (resp. $L_{1, \infty}(\mathbb{T}^{d+1})$) into $L_1(\mathbb{R}^{d+1})$ (resp. $L_{1, \infty}(\mathbb{R}^{d+1})$) such that this asymptotic embedding intertwines the Fourier multipliers/Calder\'on-Zygmund operators and their discretizations.

\vspace{0.3cm}

In what follows,
$$G_l(t)=  (l \sqrt{2\pi})^{-(d+1)} e^{-\frac{|t|^2}{2l^2}},\quad t\in\mathbb{R}^{d+1}, \quad l>0.$$
We have that $\|G_l\|_1=1$.
Let $\mathcal{F}$ stand for the Fourier transform.
Note that
\begin{equation}\label{Eqn=Covariance}
\begin{split}
 (\mathcal{F}G_l)(t)= &
(l \sqrt{2\pi})^{-(d+1)}\int e^{ - \frac{ \vert s \vert^2}{2 l^2} }  e^{-i \langle t,s \rangle}  ds \\
=& (l \sqrt{2\pi})^{-(d+1)}   \int e^{ - \frac{ \vert s \vert^2}{2} }  e^{-i \langle l t,s \rangle}  ds
= G_1(lt).
\end{split}
\end{equation}
 We set
\begin{equation}\label{ek def}
e_k(t):=e^{i\langle k,t\rangle},\quad k,t\in\mathbb{R}^{d+1}, k \in \mathbb{Z}^{d+1}.
\end{equation}
Let $\alpha = (\alpha_1, \ldots, \alpha_{d+1})$ where $\alpha_k \in \mathbb{Z}_+$. The notation $\partial^{\alpha}$ is used for $h(\nabla),$ where $h(t)=\prod_{k=1}^{d+1}t_k^{\alpha_k},$ $t\in\mathbb{R}^{d+1}.$ We have
\[
M_{e_{-k}} g(\nabla) M_{e_k} =  g(M_{e_{-k}} \nabla  M_{e_k}) = g(\nabla + k).
\]

\begin{rmk}
The Gaussian functions $G_l$ are needed to normalize our asymptotic embeddings given by periodizations of functions (see Lemmas \ref{first periodic lemma} and \ref{second periodic lemma} for exact statements). These asymptotic embeddings are closely related to the   Bohr compactification of $\mathbb{R}^{d+1}$.
\end{rmk}

The following lemma is a $(d+1)-$dimensional analogue of Lemma 7 in \cite{PScrelle}.

\begin{lem}\label{fourier estimate lemma} For every function $h$ on $\mathbb{R}^{d+1}$ whose partial derivatives up to order $d+1$ belong to $L_2(\mathbb{R}^{d+1})$ we have
$$\|\mathcal{F}^{-1}(h)\|_1\leq 2^{\frac{d+1}{2}}\sum_{|\alpha|\leq d+1}\|\partial^{\alpha}(h)\|_2.$$
\end{lem}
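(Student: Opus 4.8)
The plan is to prove Lemma \ref{fourier estimate lemma} by a Cauchy--Schwarz argument against a suitable weight, exactly as one does in the one-dimensional case of \cite[Lemma 7]{PScrelle}. First I would write, by the Fourier inversion formula,
\[
|\mathcal{F}^{-1}(h)(t)| \leq (2\pi)^{-(d+1)/2}\int_{\mathbb{R}^{d+1}} |h(s)|\, ds,
\]
but this is useless unless $h \in L_1$; the point of the hypothesis is that we only control $L_2$-norms of derivatives. So instead I would estimate $\|\mathcal{F}^{-1}(h)\|_1$ by splitting off a weight: write
\[
\|\mathcal{F}^{-1}(h)\|_1 = \int_{\mathbb{R}^{d+1}} \frac{1}{w(t)} \cdot w(t)\, |\mathcal{F}^{-1}(h)(t)|\, dt
\leq \Bigl\| \frac{1}{w} \Bigr\|_2 \cdot \bigl\| w \cdot \mathcal{F}^{-1}(h) \bigr\|_2,
\]
where $w(t) = \prod_{k=1}^{d+1}(1 + t_k^2)^{1/2}$ (or a comparable polynomial weight of degree $d+1$). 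The first factor $\|1/w\|_2$ is finite precisely because the exponent $d+1$ in each variable makes $(1+t_k^2)^{-1}$ integrable on $\mathbb{R}$, and it contributes a dimensional constant; tracking it carefully (the product structure splits it into a product of $d+1$ one-dimensional integrals) should yield the factor $2^{(d+1)/2}$.

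Next I would handle the second factor. By Plancherel, $\|w\cdot\mathcal{F}^{-1}(h)\|_2 = \|\mathcal{F}^{-1}(\text{polynomial in }\nabla\text{ applied to }h)\|_2$ up to identifying multiplication by $t_k$ on the Fourier side with a derivative; more precisely, multiplication by $t_k$ intertwines under $\mathcal{F}^{-1}$ with the operator $\partial_k$ (with the normalization $\partial_k = \frac1i \frac{\partial}{\partial t_k}$ fixed in the preliminaries, up to a harmless constant). Expanding the product $w(t)^2 = \prod_k (1 + t_k^2)$ into a sum of monomials $t^{2\alpha}$ with $|\alpha| \leq d+1$ (here $2\alpha$ ranges over multi-indices with each entry $0$ or $2$), one gets
\[
\|w\cdot \mathcal{F}^{-1}(h)\|_2^2 = \sum_{\alpha} \| t^{\alpha} \mathcal{F}^{-1}(h)\|_2^2 \lesssim \sum_{|\alpha|\leq d+1} \|\partial^{\alpha}(h)\|_2^2,
\]
and then $\bigl(\sum \|\partial^\alpha h\|_2^2\bigr)^{1/2} \leq \sum_{|\alpha|\leq d+1}\|\partial^\alpha h\|_2$. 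Combining the two factors gives the claimed bound, with the constant coming entirely from $\|1/w\|_2$.

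The only genuinely delicate point is bookkeeping the constant so that it comes out as exactly $2^{(d+1)/2}$ rather than some larger dimensional constant: one must choose the weight $w$ so that its reciprocal has $L_2$-norm exactly $2^{(d+1)/2}$ and so that the monomials appearing in $w^2$ are precisely the $\partial^\alpha$ with $|\alpha|\le d+1$, each with coefficient $1$ — the natural choice $w(t)^2 = \prod_{k=1}^{d+1}(1+t_k^2)$ does this, since $\int_{\mathbb{R}}\frac{dt}{1+t^2} = \pi$ and the $(2\pi)^{\pm(d+1)/2}$ normalizations of $\mathcal{F}^{\pm1}$ conspire to leave a clean power of $2$. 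I would spell this out in a line or two and otherwise treat the derivative/Plancherel manipulations as routine, referring to \cite[Lemma 7]{PScrelle} for the one-dimensional prototype.
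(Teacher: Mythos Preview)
Your approach is the same Cauchy--Schwarz--Plancherel idea the paper uses, and it does prove the lemma up to a dimensional constant. However, your claim about the constant is wrong: with the weight $w(t)^2=\prod_{k=1}^{d+1}(1+t_k^2)$ one has
\[
\|1/w\|_2^2=\prod_{k=1}^{d+1}\int_{\mathbb{R}}\frac{dt_k}{1+t_k^2}=\pi^{d+1},
\]
so your argument yields the bound $\pi^{(d+1)/2}\sum_{|\alpha|\le d+1}\|\partial^\alpha h\|_2$, not $2^{(d+1)/2}$. The hand-wave that ``the $(2\pi)^{\pm(d+1)/2}$ normalizations of $\mathcal{F}^{\pm1}$ conspire to leave a clean power of $2$'' is simply false: with the symmetric normalization used in the paper, Plancherel is an exact isometry and there are no leftover powers of $2\pi$ to absorb.

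The paper obtains the sharper constant by replacing your smooth global weight with a \emph{piecewise} one. It partitions $\mathbb{R}^{d+1}$ into the $2^{d+1}$ orthants $O_{\mathscr{A}}=\{t:|t_k|\ge1\text{ for }k\in\mathscr{A},\ |t_k|\le1\text{ for }k\notin\mathscr{A}\}$ and on each $O_{\mathscr{A}}$ uses the monomial weight $h_{\mathscr{A}}(t)=\prod_{k\in\mathscr{A}}t_k$. Then $\|h_{\mathscr{A}}^{-1}\chi_{O_{\mathscr{A}}}\|_2^2=2^{d+1}$ exactly (each coordinate contributes a factor $\int_{|t|\ge1}t^{-2}\,dt=2$ or $\int_{|t|\le1}1\,dt=2$), and $\|h_{\mathscr{A}}\mathcal{F}^{-1}(h)\|_2=\|\partial^{\alpha(\mathscr{A})}h\|_2$ via Plancherel, where $\alpha(\mathscr{A})\in\{0,1\}^{d+1}$ is the indicator of $\mathscr{A}$. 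Summing over $\mathscr{A}$ gives the stated $2^{(d+1)/2}$. For the applications in the paper (Lemma~\ref{first convergence lemma}) only \emph{some} finite constant is needed, so your version would suffice there; but as written your proposal does not establish the lemma with the constant claimed.
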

\begin{proof} For every $\mathscr{A}\subset\{1,\cdots,d+1\},$ we define the set $O_{\mathscr{A}}\subset\mathbb{R}^{d+1}$ by setting
$$O_{\mathscr{A}}=\{t\in\mathbb{R}^{d+1}:\ |t_k|\geq 1,\ k\in\mathscr{A},\ |t_k|\leq 1,\ k\notin\mathscr{A}\}.$$
We also define the function $h_{\mathscr{A}}$ on $\mathbb{R}^{d+1}$ by setting
$$h_{\mathscr{A}}(t)=\prod_{k\in\mathscr{A}}t_k,\quad t\in\mathbb{R}^{d+1}.$$
Note that the sets $O_{\mathscr{A}}$ form a partition of $\mathbb{R}^{d+1}$ and that   for every choice of $\mathcal{A}$ we have $\Vert h_{\mathscr{A}}^{-1} \chi_{O_{\mathscr{A}}} \Vert_2 \leq 2^{\frac{d+1}{2}}$.

We have
\[
\|\mathcal{F}^{-1}(h)\|_1\leq\sum_{\mathscr{A}\subset\{1,\cdots,d+1\}}\|\mathcal{F}^{-1}(h)\chi_{O_{\mathscr{A}}}\|_1.
\]
By the H\"older inequality
\[
\|\mathcal{F}^{-1}(h)\|_1\leq \sum_{\mathscr{A}\subset\{1,\cdots,d+1\}}\|h_{\mathscr{A}}\mathcal{F}^{-1}(h)\chi_{O_{\mathscr{A}}}\|_2\|h_{\mathscr{A}}^{-1}\chi_{O_{\mathscr{A}}}\|_2.
\]
By the previous paragraph and the Plancherel identity
\[
\|\mathcal{F}^{-1}(h)\|_1\leq 2^{\frac{d+1}{2}}\sum_{\mathscr{A}\subset\{1,\cdots,d+1\}}\|\mathcal{F}^{-1}(h_{\mathscr{A}}(\nabla)h)\|_2=2^{\frac{d+1}{2}}\sum_{\mathscr{A}\subset\{1,\cdots,d+1\}}\|h_{\mathscr{A}}(\nabla)h\|_2.
\]
 The proof follows as $h_{\mathscr{A}}(\nabla) = \partial^{\alpha}$.
\end{proof}

For a multi-index $\alpha = (\alpha_1, \ldots, \alpha_{d+1}) \in \mathbb{Z}_+^{d+1}$ let $\vert \alpha \vert = \sum_{i=1}^{d+1} \alpha_i$. We shall without further reference use the fact that $\partial^\alpha(\sigma_l(f)) = l^{- \vert \alpha \vert} \sigma_l(\partial^\alpha(f))$ for any smooth function $f$ on $\mathbb{R}^{d+1}$.

\begin{lem}\label{first convergence lemma} Let $g\in L_{\infty}(\mathbb{R}^{d+1})$ be a smooth function with all derivatives assumed to be uniformly bounded. If $(\partial^{\alpha}g)(0)=0$ for every multi-index $\alpha$ with $|\alpha|\leq d,$ then
$$\|(g(\nabla))(G_l)\|_1\to0,\quad l\to\infty.$$
\end{lem}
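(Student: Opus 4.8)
\textbf{Proof plan for Lemma \ref{first convergence lemma}.}

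The plan is to reduce the claim to the Fourier-$L_2$ estimate of Lemma \ref{fourier estimate lemma}, using the scaling relations \eqref{Eqn=Covariance} and the covariance identities \eqref{Eqn=CovariantTrans} for $\sigma_l$. First I would observe that, by the functional calculus for the (commuting) self-adjoint components of $\nabla$ together with \eqref{Eqn=Covariance}, one has $g(\nabla)(G_l) = \mathcal{F}^{-1}(g \cdot \mathcal{F}(G_l)) = \mathcal{F}^{-1}(g \cdot \sigma_{l^{-1}}(G_1))$, up to the normalising constants; here I use that $\mathcal{F}(G_l)(t) = G_1(lt) = \sigma_{l^{-1}}(G_1)(t)$. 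Hence it suffices to bound $\|\mathcal{F}^{-1}(h_l)\|_1$ where $h_l(t) := g(t)\, G_1(lt)$, a smooth function supported effectively on the ball of radius $O(l^{-1})$.

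Next I would apply Lemma \ref{fourier estimate lemma} to $h_l$, which gives
\[
\|\mathcal{F}^{-1}(h_l)\|_1 \leq 2^{\frac{d+1}{2}} \sum_{|\alpha| \leq d+1} \|\partial^\alpha(h_l)\|_2,
\]
so the whole problem reduces to showing $\|\partial^\alpha(h_l)\|_2 \to 0$ as $l \to \infty$ for each multi-index $\alpha$ with $|\alpha| \leq d+1$. By the Leibniz rule, $\partial^\alpha(h_l)$ is a sum of terms $(\partial^\beta g)(t) \cdot l^{|\gamma|}(\partial^\gamma G_1)(lt)$ with $\beta + \gamma = \alpha$. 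Here is where the vanishing hypothesis $(\partial^\alpha g)(0) = 0$ for $|\alpha| \leq d$ enters: since $G_1(lt)$ concentrates near $t = 0$ at scale $l^{-1}$, only the values of $\partial^\beta g$ near the origin matter, and by Taylor's theorem with the hypothesis we have $|(\partial^\beta g)(t)| \leq C |t|^{d+1-|\beta|}$ for $|\beta| \leq d$ (and $|(\partial^\beta g)(t)| \leq C$ for $|\beta| = d+1$, using boundedness of all derivatives of $g$). Substituting $t = l^{-1}s$ and changing variables in the $L_2$ integral, a term with $\beta + \gamma = \alpha$, $|\alpha| \leq d+1$, contributes
\[
\Big\| (\partial^\beta g)(l^{-1}s)\, l^{|\gamma|} (\partial^\gamma G_1)(s) \Big\|_{L_2(l^{-(d+1)} ds)}
\lesssim l^{-(d+1)/2}\, l^{|\gamma|}\, l^{-(d+1-|\beta|)} \big\| |s|^{d+1-|\beta|} (\partial^\gamma G_1)(s) \big\|_2,
\]
when $|\beta| \leq d$; the exponent of $l$ is $|\gamma| + |\beta| - (d+1) - (d+1)/2 = |\alpha| - (d+1) - (d+1)/2 \leq -(d+1)/2 < 0$, and the $L_2$ norm of $|s|^{d+1-|\beta|} \partial^\gamma G_1$ is finite by the Gaussian decay. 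For the terms with $|\beta| = d+1$ (forcing $\gamma = 0$, $|\alpha| = d+1$) one bounds $|(\partial^\beta g)(l^{-1}s)| \leq C$ and gets $l^{-(d+1)/2}\|G_1\|_2 \to 0$. Summing the finitely many terms gives $\|\partial^\alpha(h_l)\|_2 \to 0$, hence the conclusion.

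The main obstacle is purely bookkeeping: one must be careful that the Taylor estimate $|(\partial^\beta g)(t)| \lesssim |t|^{d+1-|\beta|}$ is legitimate for all $|\beta| \leq d$. This follows because $\partial^\beta g$ itself has all derivatives up to order $d+1-|\beta|$ vanishing at $0$ (as $(\partial^\alpha g)(0) = 0$ for all $|\alpha| \leq d$) and all its derivatives are uniformly bounded, so the degree-$(d-|\beta|)$ Taylor polynomial of $\partial^\beta g$ at $0$ is zero and the remainder is controlled by $\sup \|\partial^{d+1} g\|_\infty \cdot |t|^{d+1-|\beta|}$. Once this is in place, everything else is a routine change of variables exploiting \eqref{Eqn=CovariantTrans} and the rapid decay of the Gaussian and its derivatives, and the powers of $l$ work out to a strictly negative exponent precisely because we only need derivatives up to order $d+1$ in Lemma \ref{fourier estimate lemma} while $g$ vanishes to order $d+1$ at the origin.
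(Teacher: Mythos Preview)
Your proposal is correct and follows essentially the same route as the paper's proof: write $g(\nabla)G_l=\mathcal{F}^{-1}(g\cdot\sigma_{1/l}G_1)$, apply Lemma \ref{fourier estimate lemma}, expand by Leibniz, use the vanishing hypothesis to get $|\partial^{\beta}g(t)|\lesssim |t|^{d+1-|\beta|}$, and then rescale to obtain a strictly negative power of $l$. The paper carries out exactly these steps (with slightly different bookkeeping notation), so there is no substantive difference in method.
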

\begin{proof} We have $g(\nabla)=\mathcal{F}^{-1}M_g\mathcal{F}$, with $M_g$ the multiplication operator with $g$ on $L_2(\mathbb{R}^{d+1})$.
Recall again that $\mathcal{F}(G_l)(t) = G_1(lt), t\in\mathbb{R}^{d+1}.$
Thus, see e.g. \eqref{Eqn=Covariance},
$$(g(\nabla))(G_l)=\mathcal{F}^{-1}M_g\mathcal{F}(G_l)=\mathcal{F}^{-1}(gh_l),$$
where $h_l(t)=G_1(lt),$ $t\in\mathbb{R}^{d+1}.$ It follows from Lemma \ref{fourier estimate lemma} that
$$\|\mathcal{F}^{-1}(gh_l)\|_1\leq 2^{\frac{d+1}{2}}\sum_{|\alpha|\leq d+1}\|\partial^{\alpha}(gh_l)\|_2\leq 2^{\frac{d+1}{2}}\sum_{|\alpha|+|\beta|\leq d+1}\|\partial^{\alpha}(g)\partial^{\beta}(h_l)\|_2.$$
Due to the assumption that $(\partial^{\alpha}g)(0)=0$ for every multi-index $\alpha$ with $|\alpha|\leq d$, all coefficients in the Taylor expansion of $g$ around 0 of the terms of order $\leq d$ vanish. Therefore, as all derivatives of $g$ are assumed to be uniformly bounded functions we obtain that $|\partial^{\alpha}g|\leq c(g)f^{d+1-|\alpha|},$ $|\alpha|\leq d+1,$ where $f(t)=\vert t \vert$, for some constant $c(g)$.   Thus,
$$\|\mathcal{F}^{-1}(gh_l)\|_1\leq 2^{\frac{d+1}{2}}c(g)\sum_{|\alpha|+|\beta|\leq d+1}\|f^{d+1-|\alpha|}\partial^{\beta}(h_l)\|_2.$$
We have
$$\partial^{\beta}(h_l)=l^{|\beta|}\sigma_{\frac1l}(\partial^{\beta}G_1),\quad f^{d+1-|\alpha|}=l^{|\alpha|-d-1}\sigma_{\frac1l}(f^{d+1-|\alpha|}).$$
Thus,
\[
\begin{split}
& \|f^{d+1-|\alpha|}\partial^{\beta}(h_l)\|_2
=  l^{\vert \beta\vert+\vert\alpha\vert-d-1}\|\sigma_{\frac1l}(f^{d+1-|\alpha|}\partial^{\beta}(G_1))\|_2=\\
= & l^{  \vert \beta \vert + \vert \alpha \vert - \frac{3}{2}(d+1)  }   \|f^{d+1-|\alpha|}\partial^{\beta}G_1\|_2 \rightarrow 0.
\end{split}
\]
This concludes the proof.
\end{proof}

\begin{lem}\label{second convergence lemma} If $g:\mathbb{R}^{d+1}\to\mathbb{C}$ is a Schwartz function such that $g(0)=0,$ then
$$\|(g(\nabla))(G_l)\|_1\to0,\quad l\to\infty.$$
\end{lem}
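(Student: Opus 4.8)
The plan is to deduce Lemma~\ref{second convergence lemma} from Lemma~\ref{first convergence lemma} by a splitting argument. The obstacle is that a Schwartz function $g$ with $g(0)=0$ need not have \emph{all} its derivatives of order $\leq d$ vanishing at the origin, so Lemma~\ref{first convergence lemma} does not apply directly. However, the hypotheses of Lemma~\ref{first convergence lemma} are only about the behavior of $g$ near $0$ (together with global boundedness of derivatives), so the natural approach is to subtract from $g$ a correction term $p$ that reproduces the low-order Taylor data of $g$ at $0$, is smooth with uniformly bounded derivatives, and for which $\|(p(\nabla))(G_l)\|_1 \to 0$ can be checked by hand.

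First I would fix a cutoff function $\varphi\in C_c^\infty(\mathbb{R}^{d+1})$ with $\varphi\equiv 1$ near $0$, and let $P(t)=\sum_{1\leq|\alpha|\leq d}\frac{(\partial^\alpha g)(0)}{\alpha!}t^\alpha$ be the degree-$\leq d$ Taylor polynomial of $g$ at $0$ with the constant term omitted (which is legitimate since $g(0)=0$). Set $p=\varphi\cdot P$ and $g_0=g-p$. Then $p$ is smooth, compactly supported (hence with uniformly bounded derivatives), and by construction $(\partial^\alpha g_0)(0)=0$ for all $|\alpha|\leq d$; moreover $g_0$ is smooth with all derivatives bounded (it is a Schwartz function minus a compactly supported smooth function). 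Thus Lemma~\ref{first convergence lemma} applies to $g_0$ and gives $\|(g_0(\nabla))(G_l)\|_1\to 0$.

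It remains to show $\|(p(\nabla))(G_l)\|_1\to 0$. Since $p$ is a \emph{finite} linear combination of terms $\varphi(t)t^\alpha$ with $1\leq|\alpha|\leq d$, it suffices to treat each such term. For $q_\alpha(t)=\varphi(t)t^\alpha$ one has $q_\alpha(\nabla)(G_l)=\mathcal{F}^{-1}(q_\alpha\,h_l)$ with $h_l(t)=G_1(lt)$ as in Lemma~\ref{first convergence lemma}; applying Lemma~\ref{fourier estimate lemma} and the Leibniz rule bounds this $L_1$-norm by a finite sum of $\|\partial^\beta(\varphi\,t^\alpha)\,\partial^\gamma(h_l)\|_2$ with $|\beta|+|\gamma|\leq d+1$. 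Each factor $\partial^\beta(\varphi(t)t^\alpha)$ is bounded and compactly supported, vanishing to order $\geq |\alpha|-|\beta|$ at the origin, so it is dominated by $c\,f^{\max(|\alpha|-|\beta|,0)}$ on its support, where $f(t)=|t|$; since $|\alpha|\geq 1$ this power is at least $\max(1-|\beta|,0)$, and a short scaling computation as in Lemma~\ref{first convergence lemma} (using $\partial^\gamma h_l=l^{|\gamma|}\sigma_{1/l}(\partial^\gamma G_1)$ and $f^m=l^{-m}\sigma_{1/l}(f^m)$, together with \eqref{Eqn=CovariantTrans}) shows each such term carries a strictly negative power of $l$ and hence tends to $0$ as $l\to\infty$. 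Summing the finitely many contributions gives $\|(p(\nabla))(G_l)\|_1\to 0$, and combining with the estimate for $g_0$ via the triangle inequality finishes the proof.
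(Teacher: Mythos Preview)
Your splitting $g=g_0+p$ and the application of Lemma~\ref{first convergence lemma} to $g_0$ are correct. The gap is in the treatment of $p$. When you bound $\|\mathcal{F}^{-1}(q_\alpha h_l)\|_1$ via Lemma~\ref{fourier estimate lemma} and expand by the Leibniz rule, the individual terms $\|\partial^\beta(q_\alpha)\,\partial^\gamma(h_l)\|_2$ do \emph{not} all carry strictly negative powers of $l$. With $m=\max(|\alpha|-|\beta|,0)$ the scaling you describe gives the exponent $|\gamma|-m-\tfrac{d+1}{2}$, and under the constraint $|\beta|+|\gamma|\le d+1$ this reaches $\tfrac{d+1}{2}-|\alpha|$, which is $\ge 0$ whenever $|\alpha|\le\tfrac{d+1}{2}$. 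Concretely, take $|\alpha|=1$, $\beta=0$, $|\gamma|=d+1$: then
\[
\|q_\alpha\,\partial^\gamma h_l\|_2^2
= l^{2(d+1)}\int |\varphi(t)t^\alpha|^2\,|(\partial^\gamma G_1)(lt)|^2\,dt
= l^{\,d-1}\int |\varphi(s/l)|^2\,|s^\alpha|^2\,|(\partial^\gamma G_1)(s)|^2\,ds,
\]
which by dominated convergence tends to a positive multiple of $l^{\,d-1}$. The compact support of $q_\alpha$ does not help, since after $s=lt$ the domain exhausts $\mathbb{R}^{d+1}$. Thus your upper bound does not tend to zero (and diverges for $d\ge 2$).

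The repair is short and is essentially the paper's own mechanism. Since Fourier multipliers compose, $q_\alpha(\nabla)=\varphi(\nabla)\,\partial^\alpha$, and Young's inequality gives
\[
\|q_\alpha(\nabla)(G_l)\|_1\le\|\mathcal{F}^{-1}\varphi\|_1\,\|\partial^\alpha G_l\|_1
=\|\mathcal{F}^{-1}\varphi\|_1\,l^{-|\alpha|}\|\partial^\alpha G_1\|_1\to 0.
\]
The paper's proof avoids the detour through Lemma~\ref{first convergence lemma} altogether: it writes $g(t)=\sum_{j=1}^{d+1}t_j\,g_j(t)$ with Schwartz $g_j$ by a one-line telescoping, so $g(\nabla)(G_l)=\sum_j g_j(\nabla)(\partial_j G_l)$, and concludes via Young plus $\|\partial_j G_l\|_1=l^{-1}\|t_jG_1\|_1\to 0$. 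Your splitting $g=g_0+p$ is in fact exactly what the paper does later, in Lemma~\ref{third convergence lemma}, where the present lemma is invoked to handle the $\phi$ piece; attempting to prove the present lemma this way forces you back to the Young-inequality step for $p$ anyway.
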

\begin{proof} Define Schwartz functions $g_j:\mathbb{R}^{d+1}\to\mathbb{C},$ $1\leq j\leq d+1,$ by setting
$$g_j(t)=\frac{g(0,\cdots,0,t_j,\cdots,t_{d+1})-g(0,\cdots,0,t_{j+1},\cdots,t_{d+1})}{t_j},\quad t\in\mathbb{R}^{d+1}.$$
We have,
$$g(t)= \sum_{j=1}^{d+1}t_j g_j(t).$$
and, therefore,
\begin{equation}\label{scl1}
g(\nabla)(G_l)=\sum_{j=1}^{d+1} g_j(\nabla) \cdot \Big( \partial_j G_l \Big).
\end{equation}
It follows from Young inequality that
$$\|g_j(\nabla)x\|_1=\|\mathcal{F}^{-1}M_{g_j}\mathcal{F}x\|_1=\|\mathcal{F}^{-1}(g_j)\ast x\|_1\leq\|\mathcal{F}^{-1}(g_j)\|_1\|x\|_1,\quad x\in L_1(\mathbb{R}^{d+1}).$$
The proof then follows provided that for $x = \partial_j G_l, 1 \leq j \leq d+1$ we have,
\begin{equation}\label{Eqn=WriteOut}
\|\partial_jG_l\|_1\to0,\quad l\to\infty.
\end{equation}
Indeed, a direct computation yields,
$$\partial_jG_l=\frac1{l^{d+2}}\sigma_l(h_j),\quad \textrm{ where } h_j(t) :=it_jG_1(t),\quad t\in\mathbb{R}^{d+1}.$$
So appealing to \eqref{Eqn=CovariantTrans}, we obtain
\[
\| \partial_j(G_l )\|_1 = \frac{1}{l}  \Vert h_j\Vert_1 \rightarrow 0.
\]
\end{proof}

\begin{lem}\label{third convergence lemma} Let $g\in L_{\infty}(\mathbb{R}^{d+1})$ be a smooth function with all its derivatives assumed to be uniformly bounded. If $k\in\mathbb{R}^{d+1},$ then
$$\|(g(\nabla))(G_le_k)-g(k)G_le_k\|_1\to0,\quad l\to\infty.$$
Here $e_k$ is given by \eqref{ek def}.
\end{lem}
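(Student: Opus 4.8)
The plan is to reduce the statement to one about a ``recentered'' symbol by exploiting the modulation identity $g(\nabla)M_{e_k} = M_{e_k}\,g(\nabla+k)$ recorded just above, together with the observation that for real $k$ one has $|e_k(t)| = |e^{i\langle k,t\rangle}| = 1$, so that multiplication by $e_k$ is an isometry of $L_1(\mathbb{R}^{d+1})$. Applying this isometry, I would write
\[
\|(g(\nabla))(G_le_k)-g(k)G_le_k\|_1=\|(g(\nabla+k))(G_l)-g(k)G_l\|_1=\|(\widetilde g(\nabla))(G_l)\|_1,
\]
where $\widetilde g(t):=g(t+k)-g(k)$. Note that $\widetilde g$ is smooth and bounded, has all derivatives uniformly bounded (they are translates of the derivatives of $g$), and satisfies $\widetilde g(0)=0$. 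So it suffices to prove $\|(\widetilde g(\nabla))(G_l)\|_1\to 0$.

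Next I would split $\widetilde g$ according to its behavior near the origin. Fix a smooth compactly supported function $\phi$ on $\mathbb{R}^{d+1}$ with $\phi\equiv 1$ on a neighborhood of $0$, and write $\widetilde g=\phi\widetilde g+(1-\phi)\widetilde g$. The summand $\phi\widetilde g$ is smooth with compact support, hence a Schwartz function, and $(\phi\widetilde g)(0)=\widetilde g(0)=0$; so Lemma \ref{second convergence lemma} yields $\|((\phi\widetilde g)(\nabla))(G_l)\|_1\to 0$. The summand $(1-\phi)\widetilde g$ is smooth and bounded with all derivatives uniformly bounded (by the Leibniz rule), and it vanishes identically on a neighborhood of $0$, so $(\partial^\alpha((1-\phi)\widetilde g))(0)=0$ for \emph{every} multi-index $\alpha$, in particular for $|\alpha|\le d$; so Lemma \ref{first convergence lemma} yields $\|(((1-\phi)\widetilde g)(\nabla))(G_l)\|_1\to 0$. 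Adding the two estimates via the triangle inequality for $\|\cdot\|_1$ gives $\|(\widetilde g(\nabla))(G_l)\|_1\to 0$, as required.

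The one genuinely delicate point — the ``hard part'' — is precisely this decomposition: neither earlier convergence lemma applies to $\widetilde g$ as it stands, since $\widetilde g$ need not be Schwartz and its Taylor polynomial of degree $d$ at the origin need not vanish. The splitting $\widetilde g=\phi\widetilde g+(1-\phi)\widetilde g$ is designed so that the compactly supported piece falls under the Schwartz hypothesis of Lemma \ref{second convergence lemma} while the piece vanishing near $0$ falls under the vanishing-derivatives hypothesis of Lemma \ref{first convergence lemma}. I would also note in passing that all the $L_1$-manipulations above are legitimate because every operator is applied to a Schwartz function: $G_l$ is Schwartz, and on the Fourier side the relevant products of $G_l$'s transform with smooth bounded-derivative symbols are again Schwartz, hence lie in $L_1(\mathbb{R}^{d+1})$.
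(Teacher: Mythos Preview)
Your proof is correct and follows the same overall strategy as the paper: reduce to the case $k=0$, $g(0)=0$ via the modulation identity $M_{e_{-k}}g(\nabla)M_{e_k}=g(\nabla+k)$, then split the symbol into two pieces handled respectively by Lemma~\ref{first convergence lemma} and Lemma~\ref{second convergence lemma}. The difference lies in the decomposition itself. The paper subtracts from $g$ a localized Taylor polynomial: it sets $\phi(t)=\sum_{|\alpha|\le d}\frac{i^{|\alpha|}}{\prod_j(\alpha_j)!}(\partial^\alpha g)(0)\,t^\alpha\,\psi(t)$ for a Schwartz cutoff $\psi$, so that $g-\phi$ has vanishing derivatives of order $\le d$ at $0$ (feeding Lemma~\ref{first convergence lemma}) while $\phi$ is Schwartz with $\phi(0)=0$ (feeding Lemma~\ref{second convergence lemma}). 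Your multiplicative split $\widetilde g=\phi\widetilde g+(1-\phi)\widetilde g$ with a bump $\phi$ is more direct: the piece $(1-\phi)\widetilde g$ vanishes identically near $0$, so \emph{all} its derivatives vanish there, and $\phi\widetilde g$ is automatically compactly supported and hence Schwartz. Your route avoids computing or matching Taylor coefficients and makes the hypothesis verification immediate; the paper's route is perhaps more explicit about why the order-$d$ vanishing in Lemma~\ref{first convergence lemma} is the relevant threshold. Either way the argument goes through.
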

\begin{proof} Suppose first that $k=0$ and $g(0)=0.$ Let $\psi$ be a Schwartz function on $\mathbb{R}^{d+1}$ such that $\psi(t)=1$ whenever $|t|\leq 1.$ Set
$$\phi(t)=\sum_{|\alpha|\leq d}\frac{i^{|\alpha|}}{\prod_{k=1}^{d+1}(\alpha_k)!}(\partial^{\alpha}g)(0)t^{\alpha}\psi(t),\quad t\in\mathbb{R}^{d+1}.$$
Clearly, $\phi$ is a Schwartz function, $\phi(0) = 0$ and $(\partial^{\alpha}g)(0)=(\partial^{\alpha}\phi)(0)$ for $|\alpha|\leq d.$ In other words, the function $g-\phi$ satisfies the assumptions of Lemma \ref{first convergence lemma}. Using Lemmas \ref{first convergence lemma} and \ref{second convergence lemma}, we obtain
$$\|((g-\phi)(\nabla))(G_l)\|_1\to0,\quad \|(\phi(\nabla))(G_l)\|_1\to0,\quad l\to\infty.$$
Using triangle inequality, we obtain
$$\|(g(\nabla))(G_l)\|_1\to0,\quad l\to\infty.$$
This proves the assertion in our special case.

To prove the assertion in general, note that
\begin{equation}\label{Eqn=Limit}
\begin{split}
& \Vert g(\nabla) (G_l e_k) - g(k) G_l e_k \Vert_1
= \Vert (M_{e_{-k}} g(\nabla) M_{e_k} - g(k)) (G_l ) \Vert_1 \\
= & \Vert  ( g(\nabla +k) - g(k)  )(G_l) \Vert_1.
\end{split}
\end{equation}
Now as  $t\to g(t+k)-g(k)$ is a function satisfying the assumptions of the first paragraph, we see that \eqref{Eqn=Limit} goes to 0 as $l \rightarrow \infty$.
\end{proof}

The following Lemma \ref{fourth convergence lemma} is the main intertwining property as we explained in the beginning of this section.

\begin{lem}\label{fourth convergence lemma} Let $g\in L_{\infty}(\mathbb{R}^{d+1})$ be a smooth (except at $0$) homogeneous function of degree $0.$ For every $0\neq k\in\mathbb{R}^{d+1},$ we have
$$\|(g(\nabla))(G_le_k)-g(k)G_le_k\|_{1,\infty}\to0,\quad l\to\infty.$$
\end{lem}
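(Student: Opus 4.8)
The plan is to reduce the homogeneous-degree-$0$ case to the already established Lemma \ref{third convergence lemma} by a smooth truncation of $g$ near the origin, at the price of passing from the $L_1$-norm to the $L_{1,\infty}$-quasi-norm and invoking Theorem \ref{parcet corollary}. First I would fix $0 \neq k \in \mathbb{R}^{d+1}$ and choose a smooth cutoff $\chi$ on $\mathbb{R}^{d+1}$ with $\chi \equiv 1$ on a neighbourhood of $0$ and $\chi \equiv 0$ outside a ball of radius less than $|k|$, so that in particular $\chi(k)=0$. Write $g = g\chi + g(1-\chi)$. The function $g(1-\chi)$ is smooth everywhere (the singularity of $g$ at $0$ has been killed) and has all derivatives bounded, since away from $0$ a degree-$0$ homogeneous function and all its derivatives are bounded, and near $0$ it vanishes identically; hence Lemma \ref{third convergence lemma} applies to $g(1-\chi)$ and gives
\[
\|\big((g(1-\chi))(\nabla)\big)(G_l e_k) - g(k)(1-\chi)(k)\, G_l e_k\|_1 \to 0, \quad l \to \infty,
\]
and since $(1-\chi)(k)=1$ this reads $\|\big((g(1-\chi))(\nabla)\big)(G_l e_k) - g(k) G_l e_k\|_1 \to 0$. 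Because $\|\cdot\|_{1,\infty}$ is dominated by $\|\cdot\|_1$ (indeed $\|x\|_{1,\infty} = \sup_t t\mu(t,x) \le \int_0^\infty \mu(s,x)\,ds = \|x\|_1$), the same convergence holds in the $L_{1,\infty}$-quasi-norm.

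It remains to show $\|\big((g\chi)(\nabla)\big)(G_l e_k)\|_{1,\infty} \to 0$. Here I would exploit that $g\chi$ is compactly supported, so $h := \mathcal{F}^{-1}(g\chi)$ is a Schwartz function; thus $(g\chi)(\nabla)$ is convolution with the fixed Schwartz function $h$, and by Young's inequality $\|\big((g\chi)(\nabla)\big)(G_l e_k)\|_1 = \|h \ast (G_l e_k)\|_1 \le \|h\|_1 \|G_l e_k\|_1 = \|h\|_1$. This bound is uniform but does not tend to $0$, so a cruder estimate in $L_1$ will not suffice — and this is exactly the point where the weak-type norm and the homogeneous Calder\'on--Zygmund operator $g(\nabla)$ enter. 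The trick I expect to use is to re-split: since $g = g\chi + g(1-\chi)$ with $g(\nabla)$ bounded $L_1 \to L_{1,\infty}$ by Theorem \ref{parcet corollary} (note $g$ smooth away from $0$ and homogeneous of degree $0$ means $g \in L_\infty$ and the kernel satisfies \eqref{parcet conditions}), we have
\[
\|\big((g\chi)(\nabla)\big)(G_l e_k)\|_{1,\infty} \le 2\|\big(g(\nabla)\big)(G_l e_k)\|_{1,\infty} + 2\|\big((g(1-\chi))(\nabla)\big)(G_l e_k)\|_{1,\infty},
\]
using the quasi-triangle inequality for $\|\cdot\|_{1,\infty}$. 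The second term on the right goes to $g(k)$ times $\|G_l e_k\|_{1,\infty}$ up to a vanishing error by the previous paragraph, while $\|G_l e_k\|_{1,\infty} = \|G_l\|_{1,\infty} \to 0$ since $\|G_l\|_{1,\infty} \le \|G_1\|_\infty \cdot$(something shrinking); more precisely $\mu(t,G_l e_k) = \mu(t,G_l)$ and a direct computation gives $\sup_t t\mu(t,G_l) \to 0$ as $l \to \infty$ because $G_l$ spreads out and flattens. So everything reduces to the single assertion $\|\big(g(\nabla)\big)(G_l e_k) - g(k) G_l e_k\|_{1,\infty} \to 0$ — but that is circular, it is the statement we are proving.

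The honest route, and the one I would commit to, avoids the circularity as follows. Keep the decomposition $g = g\chi + g(1-\chi)$ with $g(1-\chi)$ handled by Lemma \ref{third convergence lemma} in $L_1$ (hence in $L_{1,\infty}$) as above. For the term $g\chi$, do \emph{not} try to show it vanishes directly; instead observe that $g\chi$ is Schwartz after inverse Fourier transform only in the sense that $\mathcal{F}^{-1}(g\chi)$ is smooth, but $g\chi$ itself is \emph{not} Schwartz near $0$ in general — wait, $g\chi$ is compactly supported and smooth except possibly at $0$, and since $\chi$ is supported near $0$ where $g$ is singular, $g\chi$ is merely bounded, not smooth. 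So $g\chi = g - g(1-\chi)$ is a bounded compactly supported function; then $g(\nabla)$ applied to the rapidly-decaying $G_l e_k$ can be estimated by splitting $g\chi$ into its value-at-$k$ part (which is $0$) plus a remainder, and using that $g\chi$ agrees with $g$ near $0$ in frequency, i.e. $(g\chi)(\nabla)$ is a ``low-frequency'' operator. The main obstacle, which I flag clearly, is precisely this low-frequency piece: one must show that a bounded compactly-supported-symbol Fourier multiplier applied to $G_l e_k$ (whose Fourier transform $G_1(l(\cdot))\,\delta$-concentrates near $k \neq 0$, outside the support of $g\chi$ for $l$ large) produces something with vanishing $L_{1,\infty}$-quasi-norm. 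For $l$ large the support condition on $\chi$ guarantees $\mathcal{F}(G_l e_k) = G_1(l(\cdot - k))$ has mass concentrated near $k$, where $g\chi = 0$; making this quantitative — that the tail of the Gaussian leaking into $\operatorname{supp}(g\chi)$ contributes an $L_1$-norm, hence $L_{1,\infty}$-norm, tending to $0$ — is the crux, and it is a Schwartz-decay computation analogous to but slightly more delicate than \eqref{Eqn=WriteOut} in Lemma \ref{second convergence lemma}. Once that is in hand, the quasi-triangle inequality for $\|\cdot\|_{1,\infty}$ assembles the two pieces and finishes the proof.
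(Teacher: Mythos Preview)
Your decomposition $g = g\chi + g(1-\chi)$ and the treatment of $g(1-\chi)$ via Lemma~\ref{third convergence lemma} are exactly right and match the paper. The genuine gap is in the $g\chi$ piece. Your final proposal is to control $\|(g\chi)(\nabla)(G_l e_k)\|_{1}$ by exploiting that $\mathcal{F}(G_l e_k)(t)=G_1(l(t-k))$ concentrates near $k$, away from $\operatorname{supp}(\chi)$. This does give an exponentially small pointwise bound on $(g\chi)(t)G_1(l(t-k))$, but that only controls $\|\mathcal{F}^{-1}(\cdot)\|_{\infty}$ and $\|\mathcal{F}^{-1}(\cdot)\|_2$, not the $L_1$- or $L_{1,\infty}$-norm you need. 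The route through Lemma~\ref{fourier estimate lemma} is blocked: since $\chi\equiv 1$ near $0$ and $g$ is homogeneous of degree $0$, $\partial^{\alpha}(g\chi)$ behaves like $|t|^{-|\alpha|}$ near $0$, which is not in $L_2$ once $|\alpha|\geq (d+1)/2$. So the ``slightly more delicate Schwartz-decay computation'' you allude to does not go through --- the singularity of $g$ at the origin is a hard obstruction, not a technicality.

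The missing idea, which you nearly reached in your discarded ``circular'' paragraph, is to use the $L_1\to L_{1,\infty}$ boundedness of $g(\nabla)$ \emph{compositionally} rather than additively. Fourier multipliers compose by multiplication of symbols, so $(g\chi)(\nabla)=g(\nabla)\circ\chi(\nabla)$. Now $\chi$ is a genuine Schwartz function with $\chi(k)=0$, so Lemma~\ref{third convergence lemma} gives $\|\chi(\nabla)(G_l e_k)\|_1\to 0$; then the weak-type bound for $g(\nabla)$ from Theorem~\ref{parcet corollary} yields
\[
\|(g\chi)(\nabla)(G_l e_k)\|_{1,\infty}\leq \|g(\nabla)\|_{L_1\to L_{1,\infty}}\,\|\chi(\nabla)(G_l e_k)\|_1\to 0.
\]
This is the paper's argument, and it is precisely this factorization that sidesteps the singularity of $g$ at $0$: the singular operator $g(\nabla)$ is applied only \emph{after} the smooth multiplier $\chi(\nabla)$ has produced something small in $L_1$.
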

\begin{proof} Fix $0\neq k\in\mathbb{R}^{d+1}.$ Fix a Schwartz function $\phi$ supported on the ball $\{|t|_2 < \vert k \vert_2 \}$ such that $\phi(t)=1$ whenever $|t|_2\leq \frac12|k|_2.$ Clearly, both functions $\phi$ and $g(1-\phi)$ satisfy the conditions of Lemma \ref{third convergence lemma}. We obtain
\[
\begin{split}
&\|((g(1-\phi))(\nabla))(G_le_k)-g(k)G_le_k\|_{1,\infty} \\
\leq &
\|((g(1-\phi))(\nabla))(G_le_k)-g(k)G_le_k\|_{1} \to0,\quad l\to\infty
\end{split}
\]
And also
\[
\|(\phi(\nabla))(G_le_k) \|_{1}\to0,\quad l\to\infty.
\]

By Theorem 1 on p.29 in \cite{Stein} (see especially Step 2 on p.30; one can also use Theorem \ref{parcet corollary} here), the operator  $g(\nabla):L_1(\mathbb{R}^{d+1})\to L_{1,\infty}(\mathbb{R}^{d+1})$ is bounded. Thus, since $\phi$ satisfies the assumptions of Lemma \ref{first convergence lemma},
$$\|(g\phi(\nabla))(G_le_k)\|_{1,\infty}\leq\|g(\nabla)\|_{L_1\to L_{1,\infty}}\|(\phi(\nabla))(G_le_k)\|_1\to0,\quad l\to\infty.$$
The assertion follows by applying triangle inequality.
\end{proof}

\begin{lem}\label{tensor mu} Let $A\in L_1(\mathcal{M}_1)$ and let $B\in L_{1,\infty}(\mathcal{M}_2).$ We have
$$\|A\otimes B\|_{1,\infty}\leq\|A\|_1\|B\|_{1,\infty}.$$
\end{lem}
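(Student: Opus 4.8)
The plan is to reduce the tensor-product estimate to the commutative distributional inequality $\mu(A\otimes B)=\mu(\mu(A)\otimes\mu(B))$ already recorded as \eqref{mu tensor}, and then to estimate the decreasing rearrangement of a product of the form $a\otimes b$ on $(0,\infty)\times(0,\infty)$ where $a=\mu(A)\in L_1(0,\infty)$ and $b=\mu(B)\in L_{1,\infty}(0,\infty)$. First I would check the hypothesis of \eqref{mu tensor}: since $A\in L_1(\mathcal{M}_1)$ its singular value function $a$ is integrable and in particular $a\in L_2$ near infinity and bounded near a positive time, while $b$ satisfies $b(s)\le \|B\|_{1,\infty}/s$, so $a\otimes b\in S((0,\infty)\times(0,\infty))$ and \eqref{mu tensor} applies, giving $\|A\otimes B\|_{1,\infty}=\|a\otimes b\|_{1,\infty}$ computed with respect to Lebesgue measure on the quarter-plane.

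Next, the core is a purely classical computation: for $t>0$ one must bound $t\,\mu(t,a\otimes b)$, i.e. show that the set $\{(x,y): a(x)b(y)>\lambda\}$ has measure at most $\|A\|_1\|B\|_{1,\infty}/\lambda$ for $\lambda=\mu(t,a\otimes b)$, equivalently that $\lambda\mapsto |\{a\otimes b>\lambda\}|$ is $\le \|A\|_1\|B\|_{1,\infty}/\lambda$. The clean way is Fubini: $|\{a(x)b(y)>\lambda\}|=\int_0^\infty |\{y: b(y)>\lambda/a(x)\}|\,dx=\int_0^\infty n_b(\lambda/a(x))\,dx$ where $n_b$ is the distribution function of $b$; using $n_b(s)\le \|B\|_{1,\infty}/s$ (the defining property of the weak $L_1$ quasi-norm in rearranged form) this is $\le \int_0^\infty \frac{\|B\|_{1,\infty}\,a(x)}{\lambda}\,dx=\frac{\|B\|_{1,\infty}\|A\|_1}{\lambda}$. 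Taking $\lambda$ to be (just below) $\mu(t,a\otimes b)$ and using the general identity $t\le |\{a\otimes b>\mu(t,a\otimes b)^-\}|$ yields $t\,\mu(t,a\otimes b)\le \|A\|_1\|B\|_{1,\infty}$, and taking the supremum over $t$ gives the claim.

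The only mild technical point — and the thing I would be most careful about — is the interface between $\mu$-language and distribution-function language: one has to make sure $n_b(s)\le \|B\|_{1,\infty}/s$ is legitimate (it follows from $b=\mu(B)$ being the right-continuous inverse of $n_B$ together with $s\,b(s)\le \|B\|_{1,\infty}$), and one has to handle the supremum/infimum subtlety in the definition of $\mu$ when $a\otimes b$ has flat pieces, which is dispatched by approximating $\lambda$ from below or by noting that $\mu(t,a\otimes b)=\inf\{\lambda: |\{a\otimes b>\lambda\}|\le t\}$ makes the chain of inequalities exact in the limit. No deep input is needed beyond \eqref{mu tensor}; everything else is Fubini and the definition of the weak-$L_1$ quasi-norm, so there is no real obstacle, just bookkeeping.
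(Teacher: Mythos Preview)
Your argument is correct and, after the common first step of invoking \eqref{mu tensor} to reduce to the commutative estimate $\|\mu(A)\otimes\mu(B)\|_{1,\infty}\le\|A\|_1\|B\|_{1,\infty}$, it proceeds along a genuinely different line from the paper. The paper first uses the pointwise bound $\mu(B)\le\|B\|_{1,\infty}\,z$ with $z(t)=t^{-1}$ to reduce to computing $\mu(\mu(A)\otimes z)$, and then establishes the exact identity $\mu(x\otimes z)=\|x\|_1\,z$ for decreasing $x\in L_1(0,\infty)$ via an approximation by step functions. You instead go straight to the distribution function of $a\otimes b$ and bound it by Fubini, $n_{a\otimes b}(\lambda)=\int_0^\infty n_b(\lambda/a(x))\,dx\le \|B\|_{1,\infty}\|A\|_1/\lambda$, which is shorter and avoids any limiting argument; the paper's route, on the other hand, yields the sharper intermediate statement $\mu(x\otimes z)=\|x\|_1 z$, which is of independent interest. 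Your handling of the passage from the distribution-function bound to $\sup_t t\,\mu(t,a\otimes b)$ is fine: it is just the standard equivalence $\sup_{t>0}t\,\mu(t,f)=\sup_{\lambda>0}\lambda\,n_f(\lambda)$, so you could replace the ``approximate $\lambda$ from below'' discussion by a one-line appeal to that identity.
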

\begin{proof} Define the function $z$ on $(0,\infty)$ by setting $z(t):=t^{-1},$ $t>0.$ We have
$$\mu(A\otimes B)\stackrel{\eqref{mu tensor}}{=}\mu(\mu(A)\otimes\mu(B))\leq\|B\|_{1,\infty}\mu(\mu(A)\otimes z).$$

We claim that for every positive decreasing function $x\in L_1(0,\infty),$ we have $\mu(x\otimes z)=\|x\|_1z.$ Set $x_n=\sum_{k=0}^{n^2-1}\mu(\frac{k+1}{n},x)\chi_{(\frac{k}{n},\frac{k+1}{n})}, n > 1.$ The functions $\chi_{(\frac{k}{n},\frac{k+1}{n})}\otimes z,$ $0\leq k<n^2,$ are disjointly supported and equimeasurable with $\frac1nz.$ Therefore,
$$\mu(x_n\otimes z)=\mu(\sum_{k=0}^{n^2-1}\mu(\frac{k+1}{n},x)\chi_{(\frac{k}{n},\frac{k+1}{n})}\otimes z)=\mu(\bigoplus_{k=0}^{n^2-1}\frac1n\mu(\frac{k+1}{n},x)z)=\|x_n\|_1z.$$
It is immediate that $x_n\uparrow x$ and, therefore, $x_n\otimes z\uparrow x\otimes z$ and $\mu(x_n\otimes z)\uparrow \mu(x\otimes z).$ This proves the claim.
\end{proof}

Let
\[
{\rm per}:\mathcal{M}\otimes L_{\infty}(\mathbb{T}^{d+1})\to \mathcal{M}\otimes L_{\infty}(\mathbb{R}^{d+1})
\]
 be the natural embedding by periodicity. Under the identification $\mathcal{M}\otimes L_{\infty}(\mathbb{R}^{d+1}) \simeq L_\infty(\mathbb{R}^{d+1}, \mathcal{M})$ (the latter being understood as weakly measurable, essentially bounded functions) and similarly for the torus, it is defined as
 \[
 {\rm per}(f)(t) = f(t \:\: {\rm mod} \: 2\pi), \qquad t \in \mathbb{R}^{d+1}.
 \]
We consider $\mathbb{T}$ with total Haar measure $2 \pi$. The next Lemma \ref{first periodic lemma} provides the asymptotic embedding of $L_1(\mathbb{T}^{d+1})$ to  $L_1(\mathbb{R}^{d+1})$.

\begin{lem}\label{first periodic lemma} For every $W\in L_1(\mathcal{M}\otimes L_{\infty}(\mathbb{T}^{d+1})),$ we have
$$\lim_{l\to\infty}\|{\rm per}(W)\cdot(1\otimes G_l)\|_{L_1(\mathcal{M}\otimes L_{\infty}(\mathbb{R}^{d+1}))}=\frac1{(2\pi)^{d+1}}\|W\|_{L_1(\mathcal{M}\otimes L_{\infty}(\mathbb{T}^{d+1}))}.$$
\end{lem}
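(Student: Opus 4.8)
The plan is to reduce the statement to a computation about scalar functions on $\mathbb{T}^{d+1}$ and $\mathbb{R}^{d+1}$, tensored with a fixed element of $L_1(\mathcal{M})$, and then pass to the limit using the fact that $\|G_l\|_1=1$ together with the concentration/spreading behaviour of the Gaussian. First I would observe that the claimed identity is linear in $W$ and continuous in $W$ with respect to $\|\cdot\|_{L_1(\mathcal{M}\otimes L_\infty(\mathbb{T}^{d+1}))}$: indeed $\|{\rm per}(W)(1\otimes G_l)\|_1 \le \|W\|_{L_1(\mathcal{M}\otimes L_\infty(\mathbb{T}^{d+1}))}$ uniformly in $l$ (since multiplying by $1\otimes G_l$ with $\|G_l\|_1=1$ and periodicity do not increase the $L_1$ norm — this needs a short argument, see the last paragraph), while the right-hand side is obviously continuous. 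Hence it suffices to verify the identity on a dense subset of $L_1(\mathcal{M}\otimes L_\infty(\mathbb{T}^{d+1}))$, for which I would take finite sums $W = \sum_j a_j \otimes f_j$ with $a_j \in L_1(\mathcal{M})$ and $f_j \in L_\infty(\mathbb{T}^{d+1})$ a trigonometric polynomial, and by further linearity reduce to $W = a\otimes e_k|_{\mathbb{T}^{d+1}}$ with $a\in L_1(\mathcal{M})$, $k\in\mathbb{Z}^{d+1}$, where $e_k|_{\mathbb{T}^{d+1}}$ is the character $\theta\mapsto e^{i\langle k,\theta\rangle}$.

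For such a $W$ one has ${\rm per}(W) = a\otimes e_k$ (viewing $e_k$ now as the function on $\mathbb{R}^{d+1}$ defined in \eqref{ek def}, which is exactly the periodisation of the character), so ${\rm per}(W)(1\otimes G_l) = a\otimes (e_k G_l)$, and therefore
\[
\|{\rm per}(W)(1\otimes G_l)\|_{L_1(\mathcal{M}\otimes L_\infty(\mathbb{R}^{d+1}))} = \|a\|_1 \, \|e_k G_l\|_{L_1(\mathbb{R}^{d+1})} = \|a\|_1 \, \|G_l\|_{L_1(\mathbb{R}^{d+1})} = \|a\|_1,
\]
using $|e_k|\equiv 1$ and $\|G_l\|_1=1$. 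On the other hand $\|W\|_{L_1(\mathcal{M}\otimes L_\infty(\mathbb{T}^{d+1}))} = \|a\|_1 \cdot \|e_k|_{\mathbb{T}^{d+1}}\|_{L_1(\mathbb{T}^{d+1})} = \|a\|_1 \cdot (2\pi)^{d+1}$, since $|e_k|\equiv 1$ and $\mathbb{T}$ carries total Haar mass $2\pi$. Dividing, the asserted limit identity holds exactly (not just asymptotically) for these $W$, and a fortiori the limit exists and equals $\tfrac{1}{(2\pi)^{d+1}}\|W\|$. Combined with the density and uniform-boundedness argument above, this gives the lemma for general $W\in L_1(\mathcal{M}\otimes L_\infty(\mathbb{T}^{d+1}))$.

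The step I expect to require the most care is justifying the uniform bound $\|{\rm per}(W)(1\otimes G_l)\|_{L_1(\mathcal{M}\otimes L_\infty(\mathbb{R}^{d+1}))} \le \|W\|_{L_1(\mathcal{M}\otimes L_\infty(\mathbb{T}^{d+1}))}$ (and, dually, that the trigonometric polynomials are dense, which is standard). For the bound I would use the identification $L_1(\mathcal{M}\otimes L_\infty(\mathbb{R}^{d+1})) \simeq L_1(\mathbb{R}^{d+1}, L_1(\mathcal{M}))$, write $\|{\rm per}(W)(1\otimes G_l)\|_1 = \int_{\mathbb{R}^{d+1}} G_l(t)\,\|{\rm per}(W)(t)\|_{L_1(\mathcal{M})}\,dt$, note that $t\mapsto \|{\rm per}(W)(t)\|_{L_1(\mathcal{M})}$ is the periodisation of $\theta\mapsto\|W(\theta)\|_{L_1(\mathcal{M})}$, and then apply a standard Riemann–Lebesgue/averaging argument: partitioning $\mathbb{R}^{d+1}$ into translates of a fundamental cube and using that $G_l$ is an approximate identity spreading to infinity, the average of $G_l$ against a $2\pi\mathbb{Z}^{d+1}$-periodic function $F\ge 0$ converges to its mean $\tfrac{1}{(2\pi)^{d+1}}\int_{\mathbb{T}^{d+1}}F$; in particular it is bounded by $\|F\|_{L_1(\mathbb{T}^{d+1})}$ up to the constant, which gives the uniform bound and, incidentally, reproves the lemma directly for $F = \|W(\cdot)\|_{L_1(\mathcal{M})}$ once one checks that equality of limits of $L_1(\mathcal{M})$-norms passes through (for the general case the density reduction is cleaner). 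Either route works; I would present the density reduction since it isolates the only genuinely new input, the trivial computation on characters, from the soft functional-analytic scaffolding.
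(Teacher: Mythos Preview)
Your primary route has a genuine gap: neither side of the asserted identity is linear in $W$, so the reduction ``by further linearity'' from finite sums $\sum_j a_j\otimes f_j$ to single characters $a\otimes e_k$ is illegitimate. The map $W\mapsto \|{\rm per}(W)(1\otimes G_l)\|_1$ is a (quasi-)norm, not a linear functional, and in general
\[
\|{\rm per}(a_1\otimes e_{k_1}+a_2\otimes e_{k_2})(1\otimes G_l)\|_1 \neq \|a_1\|_1+\|a_2\|_1.
\]
Your computation for $W=a\otimes e_k$ is correct but vacuous as a building block: density plus uniform boundedness lets you pass from a dense class to all of $L_1$, but it does \emph{not} let you pass from single characters to finite linear combinations of characters. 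So the ``soft functional-analytic scaffolding'' you describe cannot carry the argument.

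The approach you sketch in your last paragraph, however, is essentially correct and is exactly what the paper does. Using the identification $L_1(\mathcal{M}\otimes L_\infty(\mathbb{R}^{d+1}))\simeq L_1(\mathbb{R}^{d+1},L_1(\mathcal{M}))$ one has
\[
\|{\rm per}(W)(1\otimes G_l)\|_1=\int_{\mathbb{R}^{d+1}}G_l(t)\,F(t\bmod 2\pi)\,dt,\qquad F(\theta)=\|W(\theta)\|_{L_1(\mathcal{M})},
\]
so the whole lemma reduces to the scalar statement $\int_{\mathbb{R}^{d+1}}G_l\cdot{\rm per}(F)\to(2\pi)^{-(d+1)}\int_{\mathbb{T}^{d+1}}F$ for nonnegative periodic $F\in L_1(\mathbb{T}^{d+1})$. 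The paper proves this directly by partitioning $\mathbb{R}^{d+1}$ into the cubes $2\pi m+[0,2\pi]^{d+1}$, bounding $G_l$ on each cube above and below by its values at the near/far corners, and recognising the resulting sums $\sum_m G_l(2\pi m)$ as Riemann sums for $\|G_l\|_1=1$. No density argument is needed; the cube decomposition handles all $W$ at once. You should present this route and drop the character reduction.
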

\begin{proof}  For every $m\in\mathbb{Z},$ define $l(m),n(m)\in\mathbb{Z}$ by setting
\begin{equation}\label{Eqn=LDef}
l(m)=
\begin{cases}
m&m\geq0\\
m+1&m<0
\end{cases},
\end{equation}
\begin{equation}\label{Eqn=NDef}
n(m)=
\begin{cases}
m+1&m\geq0\\
m&m<0
\end{cases}.
\end{equation}
Next set
\[
\begin{split}
l(m) = (l(m_1), \ldots, l(m_{d+1})),& \qquad m \in \mathbb{Z}^{d+1}, \\
n(m) =  (n(m_1), \ldots, n(m_{d+1})),& \qquad m \in \mathbb{Z}^{d+1}.
\end{split}
\]
Clearly,
\[
\begin{split}
&  \|{\rm per}(W)\cdot(1\otimes G_l)\|_{L_1(\mathcal{M}\otimes L_{\infty}(\mathbb{R}^{d+1}))} \\
= &  \sum_{m\in\mathbb{Z}^{d+1}}\|{\rm per}(W)\cdot(1\otimes G_l)\cdot(1\otimes\chi_{2\pi m+[0,2\pi]^{d+1}})\|_{L_1(\mathcal{M}\otimes L_{\infty}(\mathbb{R}^{d+1}))}.
\end{split}
\]
By construction,
\[
G_l(2\pi n(m))\leq G_l(t)\leq G_l(2\pi l(m)),\quad t\in 2\pi m+[0,2\pi]^{d+1}.
\]
Hence,
\[
\begin{split}
& \|{\rm per}(W)\cdot(1\otimes G_l)\|_{L_1(\mathcal{M}\otimes L_{\infty}(\mathbb{R}^{d+1}))} \\
\leq & \sum_{m\in\mathbb{Z}^{d+1}}G_l(2\pi l(m))\|{\rm per}(W)\cdot(1\otimes\chi_{2\pi m+[0,2\pi]^{d+1}})\|_{L_1(\mathcal{M}\otimes L_{\infty}(\mathbb{R}^{d+1}))} \\
= &\|W\|_{L_1(\mathcal{M}\otimes L_{\infty}(\mathbb{T}^{d+1}))}\cdot \sum_{m\in\mathbb{Z}^{d+1}}G_l(2\pi l(m)).
\end{split}
\]
Similarly,
\[
\begin{split}
& \|{\rm per}(W)\cdot(1\otimes G_l)\|_{L_1(\mathcal{M}\otimes L_{\infty}(\mathbb{R}^{d+1}))} \\
\geq &\sum_{m\in\mathbb{Z}^{d+1}}G_l(2\pi n(m))\|{\rm per}(W)\cdot(1\otimes\chi_{2\pi m+[0,2\pi]^{d+1}})\|_{L_1(\mathcal{M}\otimes L_{\infty}(\mathbb{R}^{d+1}))} \\
= &\|W\|_{L_1(\mathcal{M}\otimes L_{\infty}(\mathbb{T}^{d+1}))}\cdot \sum_{m\in\mathbb{Z}^{d+1}}G_l(2\pi n(m)).
\end{split}
\]
We have
\[
\begin{split}
&\sum_{m\in\mathbb{Z}^{d+1}}G_l(2\pi l(m))
= \left(  \sum_{m\in\mathbb{Z}} G_l(2\pi l(m))
   \right)^{d+1} \\
= &
\Big(\frac1{l\sqrt{2\pi}}+\frac1{l\sqrt{2\pi}}\sum_{m\in\mathbb{Z}}e^{-\frac{(2\pi m)^2}{2l^2}}\Big)^{d+1}\to\frac1{(2\pi)^{d+1}},\quad l\to\infty,
\end{split}
\]
where the limit is by elementary  Riemann integration. Similarly
\[
\begin{split}
&\sum_{m\in\mathbb{Z}^{d+1}}G_l(2\pi n(m))
= \left(  \sum_{m\in\mathbb{Z}} G_l(2\pi n(m))
   \right)^{d+1} \\
= &
\Big( -\frac1{l\sqrt{2\pi}}+\frac1{l\sqrt{2\pi}}\sum_{m\in\mathbb{Z}}e^{-\frac{(2\pi m)^2}{2l^2}}\Big)^{d+1}\to\frac1{(2\pi)^{d+1}},\quad l\to\infty.
\end{split}
\]
Combining the last 4 equations  completes the proof as they show that we have estimates
\[
\begin{split}
& \frac1{(2\pi)^{d+1}}\|W\|_{L_1(\mathcal{M}\otimes L_{\infty}(\mathbb{T}^{d+1}))} - \epsilon_l  \\
 \leq &  \|{\rm per}(W)\cdot(1\otimes G_l)\|_{L_1(\mathcal{M}\otimes L_{\infty}(\mathbb{R}^{d+1}))} \leq \frac1{(2\pi)^{d+1}}\|W\|_{L_1(\mathcal{M}\otimes L_{\infty}(\mathbb{T}^{d+1}))} + \epsilon_l.
\end{split}
\]
for some sequences $\epsilon_l >0$ that converges to 0.
\end{proof}

The next lemma gives the asymptotic norm estimate  of periodizations of  elements of $L_{1, \infty}(\mathbb{T}^{d+1})$ with the norms of  $L_{1, \infty}(\mathbb{R}^{d+1})$.

\begin{lem}\label{second periodic lemma}
For every $W\in L_{1,\infty}(\mathcal{M}\otimes L_{\infty}(\mathbb{T}^{d+1})),$ we have
$$\liminf_{l\to\infty}\|{\rm per}(W)\cdot(1\otimes G_l)\|_{L_{1,\infty}(\mathcal{M}\otimes L_{\infty}(\mathbb{R}^{d+1}))}\gtrsim\|W\|_{L_{1,\infty}(\mathcal{M}\otimes L_{\infty}(\mathbb{T}^{d+1}))}.$$
Here, $\gtrsim$ means inequality up to some constant independent of $W$.
\end{lem}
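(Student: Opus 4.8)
The plan is to argue as in the lower-bound half of Lemma \ref{first periodic lemma}, but the $L_{1,\infty}$-quasinorm is not additive over disjoint supports, so one cannot simply sum the contributions of the individual cubes $2\pi m+[0,2\pi]^{d+1}$. The way around this is to keep \emph{many} cubes at once and to use the scaling behaviour of singular value functions: a $K$-fold disjoint direct sum $W^{\oplus K}$ satisfies $\mu(t,W^{\oplus K})=\mu(t/K,W)$, hence $\|W^{\oplus K}\|_{1,\infty}=K\|W\|_{1,\infty}$. The decisive point will be to let the number of cubes grow like $l^{d+1}$, exactly compensating the $l^{-(d+1)}$ decay of the Gaussian $G_l$.

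Concretely, fix $W\in L_{1,\infty}(\mathcal{M}\otimes L_{\infty}(\mathbb{T}^{d+1}))$ and, for $l>0$, set $N=N(l)=\lfloor l\rfloor$, $K_l=(2N+1)^{d+1}$ and $U_l=\bigcup_{\|m\|_{\infty}\leq N}(2\pi m+[0,2\pi]^{d+1})\subseteq[-2\pi(N+1),2\pi(N+1)]^{d+1}$. Since $G_l$ is radially decreasing, $G_l\geq\delta_l\chi_{U_l}$ pointwise, where $\delta_l=(l\sqrt{2\pi})^{-(d+1)}\exp(-(d+1)(2\pi(N+1))^2/(2l^2))$, and an elementary computation gives $\lim_{l\to\infty}\delta_lK_l=(2/\pi)^{(d+1)/2}e^{-2\pi^2(d+1)}=:c_d>0$. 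Because $1\otimes G_l$ lies in the centre of $\mathcal{M}\otimes L_{\infty}(\mathbb{R}^{d+1})$ one has $|{\rm per}(W)(1\otimes G_l)|=(1\otimes G_l)|{\rm per}(W)|\geq\delta_l|{\rm per}(W)(1\otimes\chi_{U_l})|$ as positive operators, whence by monotonicity of singular value functions $\mu(t,{\rm per}(W)(1\otimes G_l))\geq\delta_l\mu(t,{\rm per}(W)(1\otimes\chi_{U_l}))$ for every $t>0$.

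It remains to identify the second factor. Since ${\rm per}(W)$ is $2\pi\mathbb{Z}^{d+1}$-periodic and translations preserve Lebesgue measure, ${\rm per}(W)(1\otimes\chi_{U_l})$ is a direct sum of $K_l$ copies of $W$ supported on pairwise disjoint cubes, each equimeasurable with $W$; so, either from \eqref{mu tensor} applied to $W\otimes 1_{\ell_{\infty}^{K_l}}$ or directly from the distribution function, $\mu(t,{\rm per}(W)(1\otimes\chi_{U_l}))=\mu(t/K_l,W)$. Multiplying the displayed inequality by $t$ and taking the supremum over $t>0$ yields $\|{\rm per}(W)(1\otimes G_l)\|_{1,\infty}\geq\delta_lK_l\|W\|_{L_{1,\infty}(\mathbb{T}^{d+1})}$, and letting $l\to\infty$ gives the assertion with implied constant $c_d$. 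I expect the one genuine obstacle to be the bookkeeping behind the scaling $N(l)\sim l$ — one must check that the count $K_l\sim(2l)^{d+1}$ of admissible cubes offsets the factor $(l\sqrt{2\pi})^{-(d+1)}$ while the Gaussian exponent stays bounded; the remaining ingredients (centrality of $1\otimes G_l$, monotonicity of $\mu(t,\cdot)$ under $0\leq a\leq b$, and the direct-sum formula for $\mu$) are routine.
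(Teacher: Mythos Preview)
Your proof is correct and follows essentially the same approach as the paper: bound $G_l$ below by a constant of order $l^{-(d+1)}$ on a region containing $\sim l^{d+1}$ fundamental cubes, use that ${\rm per}(W)$ restricted to these cubes is a $K_l$-fold direct sum of copies of $W$ so that $\mu(t,\cdot)=\mu(t/K_l,W)$, and observe that the two powers of $l$ cancel. The only cosmetic differences are that you index cubes by the $\ell^\infty$-ball $\|m\|_\infty\le\lfloor l\rfloor$ (the paper uses $|m|_2\le l$) and that you track the constant $c_d$ explicitly, while the paper is content with $\gtrsim$.
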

\begin{proof} We estimate crudely,
\[
\begin{split}
G_l(t)\geq & c(d)l^{-d-1},\quad |t|\leq 4\pi l,\\
\chi_{\{|t|\leq 4\pi l\}}\geq & \sum_{|m|\leq l}\chi_{2\pi m+[0,2\pi]^d}.
\end{split}
\]
Hence,
\[
\begin{split}
     &  \|{\rm per}(W)\cdot(1\otimes G_l)\|_{L_{1,\infty}(\mathcal{M}\otimes L_{\infty}(\mathbb{R}^{d+1}))} \\
\geq &  c(d)l^{-d-1}\|{\rm per}(W)\cdot(1\otimes \sum_{|m|\leq l}\chi_{2\pi m+[0,2\pi]^d})\|_{L_{1,\infty}(\mathcal{M}\otimes L_{\infty}(\mathbb{R}^{d+1}))}.
\end{split}
\]
Since the elements ${\rm per}(W)\cdot(1\otimes  \chi_{2\pi m+[0,2\pi]^d})$ with $|m|\leq l$ are pairwise orthogonal we have that
$${\rm per}(W)\cdot(1\otimes \sum_{|m|\leq l}\chi_{2\pi m+[0,2\pi]^d}) \in L_{1,\infty}(\mathcal{M}\otimes L_{\infty}(\mathbb{R}^{d+1})  )$$
and
$$\bigoplus_{|m|\leq l} W   \in L_{1,\infty}(\mathcal{M}\otimes L_{\infty}(\mathbb{T}^{d+1})\otimes l_{\infty})$$
are unitarily equivalent. Then
\[
\begin{split}
& \|{\rm per}(W)\cdot(1\otimes G_l)\|_{L_{1,\infty}(\mathcal{M}\otimes L_{\infty}(\mathbb{R}^{d+1}))}
 \geq   c(d)l^{-d-1}\|\bigoplus_{|m|\leq l}W\|_{L_{1,\infty}(\mathcal{M}\otimes L_{\infty}(\mathbb{T}^{d+1})\otimes l_{\infty})}.
 \end{split}
\]
Let $n_l$ be the number of $m \in \mathbb{Z}^{d+1}$ with $\vert m \vert_2 \leq l$. Note that $n_l \gtrsim l^{d+1}$.  Then $\mu(t, \bigoplus_{|m|\leq l}W ) = \mu( n_l^{-1} t,  W )$ from which we may continue the estimate
\[
\begin{split}
& \|{\rm per}(W)\cdot(1\otimes G_l)\|_{L_{1,\infty}(\mathcal{M}\otimes L_{\infty}(\mathbb{R}^{d+1}))}
\geq    c(d)l^{-d-1}  n_l \|W\|_{L_{1,\infty}(\mathcal{M}\otimes L_{\infty}(\mathbb{T}^{d+1}))} \\
\geq & c(d) \|W\|_{L_{1,\infty}(\mathcal{M}\otimes L_{\infty}(\mathbb{T}^{d+1}))}.
\end{split}
\]
\end{proof}

We are now fully equipped to prove our main result.

\begin{proof}[Proof of Theorem \ref{axis torus}] Let $\mathscr{A}\subset\mathbb{Z}^{d+1}$ be a finite set. Let
$$W=\sum_{k\in\mathscr{A}}W_k\otimes e_k,\quad W_k\in L_1(\mathcal{M}).$$
Firstly, we prove
\[
\Vert (1 \otimes g(\nabla))(W)\Vert_{1, \infty} \lesssim \Vert W \Vert_1,
\]
 for $W$ as above. As conditional expectations are contractions on $L_1$ we have
\[
\begin{split}
&  \Vert \sum_{0  \not = k \in \mathscr{A} }  W_k \otimes e_k\Vert_1
\leq
\|\sum_{k\in\mathscr{A}} W_k\otimes e_k\|_1 + \Vert W_0 \otimes e_0 \Vert_1
\leq   2\|W\|_1, \qquad k \in \mathcal{A}.
\end{split}
\]
Therefore, we may (and will) assume without loss of generality that $0\notin\mathscr{A}.$ By Theorem \ref{Thm=Parcet}, we have
$$\|(1\otimes g(\nabla))({\rm per}(W)\cdot (1\otimes G_l))\|_{L_{1,\infty}(\mathcal{M}\otimes L_{\infty}(\mathbb{R}^{d+1}))}\leq \|{\rm per}(W)\cdot (1\otimes G_l)\|_{L_1(\mathcal{M}\otimes L_{\infty}(\mathbb{R}^{d+1}))}. $$
By respectively Lemma \ref{tensor mu} and Lemma   \ref{fourth convergence lemma}   we have for each $k \in \mathcal{A}$ as $l \rightarrow \infty$,
\[
\begin{split}
&  \Vert (1 \otimes g(\nabla))(W_k \otimes G_l e_k) - g(k) (W_k \otimes G(l) e_k) \Vert_{1, \infty} \\
\leq  & \Vert W_k \Vert_1 \Vert  (g(\nabla))(G_l e_k)  - g(k) G_l e_k  \Vert_{1, \infty} \rightarrow 0.
\end{split}
\]
The quasi-triangle inequality gives for  sums of arbitrary operators $x_\alpha$  that
$$\Vert \sum_{\alpha \in \mathcal{A}} x_\alpha \Vert_{1, \infty} \leq 2^{\vert \mathcal{A} \vert} \sum_{\alpha \in \mathcal{A}} \Vert x_\alpha \Vert_{1, \infty}.$$
So it follows that as $l \rightarrow \infty$
\[
\Vert \sum_{\alpha \in \mathcal{A}} (1 \otimes g(\nabla))(W_k \otimes G_l e_k) - \sum_{\alpha \in \mathcal{A}}  g(k) (W_k \otimes G(l) e_k) \Vert_{1, \infty} \rightarrow 0.
\]
In other words we have   as $l \rightarrow \infty$
\[
\Vert (1 \otimes g(\nabla)) ( {\rm per}(W) \cdot (1 \otimes G_l)   ) - {\rm per} (  (1 \otimes g(\nabla) ) (W) ) (1 \otimes G_l) \Vert_{1, \infty} \rightarrow 0.
\]
  Thus,
\begin{equation}\label{Eqn=SomeLimit}
\begin{split}
& \liminf_{l\to\infty}\|{\rm per}((1\otimes g(\nabla))(W))\cdot (1\otimes G_l)\|_{L_{1,\infty}(\mathcal{M}\otimes L_{\infty}(\mathbb{R}^{d+1}))}  \\
\leq & \liminf_{l\to\infty}\|{\rm per}(W)\cdot (1\otimes G_l)\|_{L_1(\mathcal{M}\otimes L_{\infty}(\mathbb{R}^{d+1}))}.
\end{split}
\end{equation}
It follows now from Lemma \ref{second periodic lemma},  \eqref{Eqn=SomeLimit} and Lemma \ref{first periodic lemma}   that
\begin{equation}\label{estimate specific}
\begin{split}
& \|(1\otimes g(\nabla_{\mathbb{T}^{d+1}}))(W)\|_{L_{1,\infty}(\mathcal{M}\otimes L_{\infty}(\mathbb{T}^{d+1}))}\\
\lesssim &
\liminf_{l\to\infty}\|{\rm per}((1\otimes g(\nabla_{\mathbb{T}^{d+1}}))W)\cdot(1\otimes G_l)\|_{L_{1,\infty}(\mathcal{M}\otimes L_{\infty}(\mathbb{R}^{d+1}))} \\
 \leq & \liminf_{l\to\infty}\|{\rm per}(W)\cdot (1\otimes G_l)\|_{L_1(\mathcal{M}\otimes L_{\infty}(\mathbb{R}^{d+1}))} \\
 \lesssim &
\|W\|_{L_1(\mathcal{M}\otimes L_{\infty}(\mathbb{T}^{d+1}))}.
\end{split}
\end{equation}
This proves the assertion for our specific $W$.

To see the assertion in general, fix an arbitrary $W\in L_1(\mathcal{M}\otimes L_{\infty}(\mathbb{T}^{d+1}))$ and choose $W^m$ as above such that $W^m\to W$ in $L_1(\mathcal{M}\otimes L_{\infty}(\mathbb{T}^{d+1}))$ as $m\to\infty$ (see Lemma \ref{Lem=Fejer}). In particular, the sequence $\{W^m\}_{m\geq1}\subset L_1(\mathcal{M}\otimes L_{\infty}(\mathbb{T}^{d+1}))$ is Cauchy. By \eqref{estimate specific}, the sequence $\{(1\otimes g(\nabla))(W^m)\}_{m\geq1}\subset L_{1,\infty}(\mathcal{M}\otimes L_{\infty}(\mathbb{T}^{d+1}))$ is also Cauchy. Denote the limit by $T(W).$ If also $W\in L_2(\mathcal{M}\otimes L_{\infty}(\mathbb{T}^{d+1})),$ then the sequence $\{W^m\}_{m\geq1}$ can be chosen such that also $W^m\to W$ in $L_2(\mathcal{M}\otimes L_{\infty}(\mathbb{T}^{d+1}))$ (see Remark \ref{Rmk=Fejer2}). Thus, $T(W)=(1\otimes g(\nabla))(W)$ for $W\in(L_1\cap L_2)(\mathcal{M}\otimes L_{\infty}(\mathbb{T}^{d+1})).$ This completes the proof.
\end{proof}

\section{Proof of Theorem \ref{doi bound} for the case of integral spectra}\label{Sect=IntSpec}

The next Theorem \ref{main lemma} provides the crucial connection between Calder\'on--Zygmund operators and commutator estimates. The equality \eqref{conjugate} should be understood as a transference to Schur multipliers argument. Note that here we have an exact equality \eqref{conjugate}, which we did not yet obtain in \cite{CPSZ2}.

\begin{thm}\label{main lemma} For every contraction $f:\mathbb{Z}^d\to\mathbb{Z}$ and for every collection of commuting self-adjoint operators $\mathbf{A}=\{A_k\}_{k=1}^d\subset\mathcal{M}$ with ${\rm spec}(A_k)\subset\mathbb{Z},$ we have
$$\|T_{f_{k_0}}^{\mathbf{A},\mathbf{A}}(V)\|_{1,\infty}\leq c(d)\|V\|_1,\quad V\in L_1(\mathcal{M}),\quad 1\leq k_0\leq d.$$
Here, $f_{k_0}$ is given by \eqref{fk def}.
\end{thm}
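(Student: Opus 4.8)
The plan is to realize the double operator integral $T_{f_{k_0}}^{\mathbf{A},\mathbf{A}}$ on operators with integral spectrum as the compression of a Fourier multiplier of the type handled by Theorem \ref{axis torus}, and then to transfer the weak-$(1,1)$ bound. First I would observe that, because ${\rm spec}(A_k)\subset\mathbb{Z}$, the formula \eqref{doi special} expresses $T_{f_{k_0}}^{\mathbf{A},\mathbf{A}}(V)$ as a Schur-type multiplier indexed by $(\mathbf{i},\mathbf{j})\in\mathbb{Z}^d\times\mathbb{Z}^d$ with symbol $(f_{k_0})(\mathbf{i},\mathbf{j})$, which by \eqref{fk def} depends only on the difference $\mathbf{n}=\mathbf{i}-\mathbf{j}\in\mathbb{Z}^d$; write $(f_{k_0})(\mathbf{i},\mathbf{j})=\phi(\mathbf{i}-\mathbf{j})$ where, for $\mathbf{n}\neq 0$, $\phi(\mathbf{n})$ is built from the divided difference of $f$ times $n_{k_0}/\langle\mathbf{n},\mathbf{n}\rangle$ and $\phi(0)=0$. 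The key point is that a difference-type Schur multiplier on a von Neumann algebra with $\mathbb{Z}^d$-graded structure is exactly a Fourier multiplier on the crossed product / on $L_\infty(\mathbb{T}^d)\bar\otimes\mathcal{M}$ acting in the torus variable; concretely one uses the right regular representation to write $T^{\mathbf{A},\mathbf{A}}_{f_{k_0}}$ as conjugation by the canonical embedding into $\mathcal{M}\bar\otimes L_\infty(\mathbb{T}^d)$ intertwining $V\mapsto T^{\mathbf{A},\mathbf{A}}_{f_{k_0}}(V)$ with $1\otimes m_\phi(\nabla_{\mathbb{T}^d})$ for the multiplier $m_\phi$ whose Fourier coefficients are $\phi(\mathbf{n})$. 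This is the transference step flagged in the text (equation \eqref{conjugate} in the forthcoming proof), and it should be an isometric identification so the $L_1\to L_{1,\infty}$ norm is literally preserved.

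The next step is to recognize $\phi$ as the restriction to $\mathbb{Z}^d$ of a homogeneous symbol on $\mathbb{R}^{d+1}$ — here the extra coordinate is where the $d+1$ in Theorem \ref{axis torus} comes from. Since $f:\mathbb{Z}^d\to\mathbb{Z}$ is a contraction, one extends it (e.g. by McShane/Kirszbraun) to a Lipschitz $f:\mathbb{R}^d\to\mathbb{R}$ with $\|\nabla f\|_\infty\le 1$, and then uses the Littlewood–Paley / Fourier-analytic decomposition of the divided-difference symbol $f_{k_0}$ on $\mathbb{R}^d$ into pieces that are, after introducing an auxiliary $(d{+}1)$st variable, governed by smooth homogeneous degree-$0$ functions $g$ on $\mathbb{R}^{d+1}$; this is precisely the "delicate analysis of homogeneous Calderón–Zygmund operators" advertised in the introduction. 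For each such $g$, Theorem \ref{axis torus} gives that $1\otimes g(\nabla_{\mathbb{T}^{d+1}})$ maps $L_1\to L_{1,\infty}$ with constant independent of $f$; summing/combining these pieces with the quasi-triangle inequality in $L_{1,\infty}$ (costing only a constant depending on $d$) yields the bound with $c(d)$.

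The main obstacle is the second step: writing the difference symbol $f_{k_0}$ (which a priori lives on $\mathbb{Z}^d\times\mathbb{Z}^d$, or on $\mathbb{R}^d$ via the diagonal difference) as a controlled combination of restrictions to $\mathbb{Z}^{d+1}$ of smooth degree-$0$ homogeneous functions on $\mathbb{R}^{d+1}$, and doing so with norm control uniform in the Lipschitz function $f$. The factor $n_{k_0}/|\mathbf{n}|^2$ is itself a (bounded, degree-$(-1)$) Calderón–Zygmund-type symbol, but the divided difference $f(\mathbf{i})-f(\mathbf{j})$ must be folded in; one expects to write $f(\mathbf{i})-f(\mathbf{j})=\int_0^1\langle\nabla f(\mathbf{j}+t(\mathbf{i}-\mathbf{j})),\mathbf{i}-\mathbf{j}\rangle\,dt$ and absorb the $\mathbf{i}-\mathbf{j}$ against $1/|\mathbf{n}|^2$, leaving a bounded vector of Riesz-transform-like symbols $n_r n_{k_0}/|\mathbf{n}|^2$ paired with the bounded functions $\partial_r f$. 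Handling the non-smoothness of $\phi$ at $\mathbf{n}=0$ (where $\phi$ is merely bounded, not smooth) versus the smoothness hypothesis in Theorem \ref{axis torus} requires a separate argument: the singularity at the origin of $\mathbb{Z}^{d+1}$ is a single point and contributes a finite-rank (indeed scalar, since $\phi(0)=0$) correction, so it can be split off cheaply, exactly as the $0\notin\mathscr{A}$ reduction is done inside the proof of Theorem \ref{axis torus}. Once the decomposition and this low-frequency truncation are in place, the assembly is routine.
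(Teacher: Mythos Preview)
There is a genuine gap at the very first step. You claim that, by \eqref{fk def}, $f_{k_0}(\mathbf{i},\mathbf{j})$ depends only on the difference $\mathbf{n}=\mathbf{i}-\mathbf{j}$, and that therefore the Schur multiplier is of Toeplitz type and corresponds to a Fourier multiplier on $\mathbb{T}^d$. This is false: the factor $f(\mathbf{i})-f(\mathbf{j})$ in \eqref{fk def} is \emph{not} a function of $\mathbf{i}-\mathbf{j}$ alone unless $f$ is affine. Consequently the map $V\mapsto T_{f_{k_0}}^{\mathbf{A},\mathbf{A}}(V)$ is not intertwined with any operator of the form $1\otimes m_\phi(\nabla_{\mathbb{T}^d})$ on $\mathcal{M}\bar\otimes L_\infty(\mathbb{T}^d)$, and your transference to $\mathbb{T}^d$ collapses. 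Your attempted repair via $f(\mathbf{i})-f(\mathbf{j})=\int_0^1\langle\nabla f(\mathbf{j}+t(\mathbf{i}-\mathbf{j})),\mathbf{i}-\mathbf{j}\rangle\,dt$ does not salvage this: the factors $\partial_r f$ are evaluated at points depending on both $\mathbf{i}$ and $\mathbf{j}$ separately, so they are not Fourier multipliers but (non-constant) multiplication operators, and you cannot conclude anything from the $L_1\to L_{1,\infty}$ bound for the Riesz-type symbols $n_r n_{k_0}/|\mathbf{n}|^2$ alone.

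The missing idea, and the real reason the paper works in $d{+}1$ variables, is that the extra coordinate is used to \emph{encode the values of $f$}, not to perform a Littlewood--Paley decomposition. One conjugates by the unitary $U_f=\sum_{\mathbf{i}}p_{\mathbf{i}}\otimes e_{(\mathbf{i},f(\mathbf{i}))}$ in $\mathcal{M}\bar\otimes L_\infty(\mathbb{T}^{d+1})$; then $p_{\mathbf{i}}Vp_{\mathbf{j}}$ is sent to $p_{\mathbf{i}}Vp_{\mathbf{j}}\otimes e_{(\mathbf{i}-\mathbf{j},\,f(\mathbf{i})-f(\mathbf{j}))}$, and a genuine Fourier multiplier $g(\nabla_{\mathbb{T}^{d+1}})$ multiplies this by $g(\mathbf{i}-\mathbf{j},f(\mathbf{i})-f(\mathbf{j}))$. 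The contraction hypothesis $|f(\mathbf{i})-f(\mathbf{j})|\le|\mathbf{i}-\mathbf{j}|$ guarantees that these frequencies lie in the cone $\{|t_{d+1}|\le(\sum_{k\le d}t_k^2)^{1/2}\}$, and one chooses a single smooth homogeneous degree-$0$ function $g$ on $\mathbb{R}^{d+1}$ equal to $t_{k_0}t_{d+1}/\sum_{k\le d}t_k^2$ on that cone; on the cone this equals $f_{k_0}(\mathbf{i},\mathbf{j})$ exactly. Thus the transference identity \eqref{conjugate} holds with one application of Theorem~\ref{axis torus}, and no decomposition of $f_{k_0}$, no Kirszbraun extension, and no summation in $L_{1,\infty}$ are needed.
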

\begin{proof} Fix $1 \leq k_0 \leq d$.  The idea is to construct a bounded linear operator $S:L_1(\mathcal{M}\otimes L_{\infty}(\mathbb{T}^{d+1}))\to L_{1,\infty}(\mathcal{M}\otimes L_{\infty}(\mathbb{T}^{d+1}))$ (independent of $f$) and an isometric embedding $I:L_1(\mathcal{M})\to L_1(\mathcal{M}\otimes L_{\infty}(\mathbb{T}^{d+1})),$ $I:L_{1,\infty}(\mathcal{M})\to L_{1,\infty}(\mathcal{M}\otimes L_{\infty}(\mathbb{T}^{d+1}))$ (dependent on $f$) such that
\begin{equation}\label{conjugate}
S\circ I=I\circ T_{f_{k_0}}^{\mathbf{A},\mathbf{A}}.
\end{equation}

Fix a smooth function $\varsigma:[0,1]\to\mathbb{R}$ such that $\varsigma(u)=u,$ $u\in[\frac12,1]$ and $\varsigma(u)\geq\frac13,$ $u\in[0,\frac12].$ Define a smooth function $g:\mathbb{S}^d\to\mathbb{R}$ by setting
$$g(t)=\frac{t_{k_0}t_{d+1}}{\varsigma(\sum_{k=1}^dt_k^2)},\quad |t|_2=1.$$
Extend $g$ to a smooth homogeneous function $g:\mathbb{R}^{d+1}\backslash\{0\}\to\mathbb{R}$ (of degree $0$) by setting $g(t)=g(\frac{t}{|t|_2}),$ $0\neq t\in\mathbb{R}^{d+1}.$ For $|t|_2=1,$ the conditions $\sum_{k=1}^dt_k^2\geq\frac12$ and $|t_{d+1}|\leq(\sum_{k=1}^dt_k^2)^{\frac12}$ are equivalent. Hence,
\begin{equation}\label{g value}
g(t)=\frac{t_{k_0}t_{d+1}}{\sum_{k=1}^dt_k^2},\quad |t_{d+1}|\leq(\sum_{k=1}^dt_k^2)^{\frac12},\quad 0\neq t\in\mathbb{R}^{d+1}.
\end{equation}

By assumption, $A_k=\sum_{i_k\in\mathbb{Z}}i_kp_{k,i_k},$ where $\{p_{k,i_k}\}_{i_k\in\mathbb{Z}}$ are pairwise orthogonal projections such that $\sum_{i_k\in\mathbb{Z}}p_{k,i_k}=1.$ Since $A$ is bounded, it follows that $p_{k,i_k}=0$ for all but finitely many $i_k\in\mathbb{Z}.$ Hence, these sums are, in fact, finite.
For every $\mathbf{i}=(i_1,\cdots,i_d)\in\mathbb{Z}^d,$ set $p_{\mathbf{i}}=p_{1,i_1}\cdots p_{d,i_d}.$ It is immediate that $\{p_{\mathbf{i}}\}_{\mathbf{i}\in\mathbb{Z}^d}$ are pairwise orthogonal projections and $\sum_{\mathbf{i}\in\mathbb{Z}^d}p_{\mathbf{i}}=1.$ Consider a unitary operator
$$U_f=\sum_{\mathbf{i}\in\mathbb{Z}^d}p_{\mathbf{i}}\otimes e_{(\mathbf{i},f(\mathbf{i}))},$$
where $e_{(\mathbf{i},f(\mathbf{i}))}$ is given in \eqref{ek def}.

We are now ready to define the operators $S$ and $I.$ Set
$$S(W)=(1\otimes g(\nabla_{\mathbb{T}^{d+1}}))(\sum_{\substack{\mathbf{i},\mathbf{j}\in\mathbb{Z}^d\\ \mathbf{i}\neq\mathbf{j}}}(p_{\mathbf{i}}\otimes 1)W(p_{\mathbf{j}}\otimes 1)),\quad W\in L_1(\mathcal{M}\otimes L_{\infty}(\mathbb{R}^{d+1})),$$
$$I(V)=U_f(V\otimes 1)U_f^*,\quad V\in L_{1,\infty}(\mathcal{M}).$$

Since $f$ is a contraction we have that $|f(\mathbf{i})- f(\mathbf{j})| \leq | \mathbf{i}-\mathbf{j}|_2$ and therefore by \eqref{g value} we obtain
\[
g(\mathbf{i}-\mathbf{j}, f(\mathbf{i})- f(\mathbf{j}) ) =  f_{k_0}(\mathbf{i},\mathbf{j}), \qquad \mathbf{i}, \mathbf{j} \in \mathbb{Z}^{d}.
\]
 In particular
 \[
 g(\nabla_{\mathbb{T}^{d+1}}) e_{(\mathbf{i}-\mathbf{j},f(\mathbf{i})-f(\mathbf{j}))}) = f_{k_0}(\mathbf{i},\mathbf{j}) e_{(\mathbf{i}-\mathbf{j},f(\mathbf{i})-f(\mathbf{j}))}),  \qquad \mathbf{i}, \mathbf{j} \in \mathbb{Z}^{d}.
 \]
   Recall also that $f_{k_0}( \mathbf{i},  \mathbf{i}) = 0,  \mathbf{i} \in \mathbb{Z}^{d}$.
We now prove the transference equality \eqref{conjugate}:
\[
\begin{split}
S(I(V))= &  S  \left( \Big(\sum_{\mathbf{i}\in\mathbb{Z}^d}p_{\mathbf{i}}\otimes e_{(\mathbf{i},f(\mathbf{i}))}\Big)\cdot\Big(\sum_{\substack{\mathbf{i},\mathbf{j}\in\mathbb{Z}^d }}p_{\mathbf{i}}Vp_{\mathbf{j}}\otimes 1 \Big)\cdot \Big(\sum_{\mathbf{i}\in\mathbb{Z}^d}p_{\mathbf{i}}\otimes e_{(\mathbf{i},f(\mathbf{i}))} \Big)^\ast \right) \\
= &S(\sum_{\mathbf{i},\mathbf{j}\in\mathbb{Z}^d}p_{\mathbf{i}}Vp_{\mathbf{j}}\otimes e_{(\mathbf{i}-\mathbf{j},f(\mathbf{i})-f(\mathbf{j}))}) \\
\stackrel{\eqref{g value}}{=} & \sum_{\substack{\mathbf{i},\mathbf{j}\in\mathbb{Z}^d\\ \mathbf{i}\neq\mathbf{j}}}p_{\mathbf{i}}Vp_{\mathbf{j}}\otimes f_{k_0}(\mathbf{i},\mathbf{j}) e_{(\mathbf{i}-\mathbf{j},f(\mathbf{i})-f(\mathbf{j}))} \\
= & \Big(\sum_{\mathbf{i}\in\mathbb{Z}^d}p_{\mathbf{i}}\otimes e_{(\mathbf{i},f(\mathbf{i}))}\Big)\cdot\Big(\sum_{\substack{\mathbf{i},\mathbf{j}\in\mathbb{Z}^d }}p_{\mathbf{i}} T_{f_{k_0}}^{\mathbf{A},\mathbf{A}}(V) p_{\mathbf{j}}\otimes 1 \Big)\cdot \Big(\sum_{\mathbf{i}\in\mathbb{Z}^d}p_{\mathbf{i}}\otimes e_{(\mathbf{i},f(\mathbf{i}))} \Big)^\ast \\
=&U_f\cdot (T_{f_{k_0}}^{\mathbf{A},\mathbf{A}}(V)  \otimes 1) \cdot U_f^*=I(T_{f_{k_0}}^{\mathbf{A},\mathbf{A}}(V)).
\end{split}
\]

By Theorem \ref{axis torus}, the  mapping
$$1\otimes g(\nabla_{  \mathbb{T}^{d+1}   }):L_1(\mathcal{M}\otimes L_\infty(\mathbb{T}^{d+1}))\to L_{1,\infty}(\mathcal{M}\otimes L_\infty(\mathbb{T}^{d+1})).$$
is bounded.
Therefore,
\[
\begin{split}
&\|T_{f_{k_0}}^{\mathbf{A},\mathbf{A}}(V)\|_{L_{1,\infty}(\mathcal{M})}=\|I(T_{f_{k_0}}^{\mathbf{A},\mathbf{A}}(V))\|_{L_{1,\infty}(\mathcal{M}\otimes L_{\infty}(\mathbb{T}^{d+1}))} \\
=&\|S(I(V))\|_{L_{1,\infty}(\mathcal{M}\otimes L_{\infty}(\mathbb{T}^{d+1}))}\\
\leq & \|S\|_{L_1(\mathcal{M}\otimes L_{\infty}(\mathbb{T}^{d+1}))\to L_{1,\infty}(\mathcal{M}\otimes L_{\infty}(\mathbb{T}^{d+1}))}\|I(V)\|_{L_1(\mathcal{M}\otimes L_{\infty}(\mathbb{T}^{d+1}))}\\
\lesssim & \|1\otimes g(\nabla)\|_{L_1(\mathcal{M}\otimes L_{\infty}(\mathbb{T}^{d+1}))\to L_{1,\infty}(\mathcal{M}\otimes L_{\infty}(\mathbb{T}^{d+1}))}\|V\|_{L_1(\mathcal{M})}.
\end{split}
\]
This completes the proof.
\end{proof}

\section{Proof of the main results}

In this section we collect the results announced in the abstract and its corollaries.

\begin{lem}\label{riemann} Let $\mathbf{A}=\{A_k\}_{k=1}^d\subset\mathcal{M}$ be an arbitrary collection of commuting self-adjoint operators. If $\{\xi_n\}_{n\geq0}$ is a uniformly bounded sequence of Borel functions on $\mathbb{R}^{2d}$ such that $\xi_n\to\xi$ everywhere, then
\begin{equation}\label{riemann sum convergence}
T_{\xi_n}^{\mathbf{A},\mathbf{A}}(V)\to T_{\xi}^{\mathbf{A},\mathbf{A}}(V),\quad V\in L_2(\mathcal{M})
\end{equation}
in $L_2(\mathcal{M})$ as $n\to\infty.$
\end{lem}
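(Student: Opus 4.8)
The plan is to prove the $L_2$-convergence in \eqref{riemann sum convergence} by exploiting the spectral measure $\nu$ on $\mathbb{R}^{2d}$ constructed in Section \ref{doi subsection}, since $T_\xi^{\mathbf{A},\mathbf{A}}$ is by definition the integral $\int_{\mathbb{R}^{2d}} \xi \, d\nu$ acting on $L_2(\mathcal{M})$. First I would fix $V \in L_2(\mathcal{M})$ and consider the finite positive measure $\mu_V$ on $\mathbb{R}^{2d}$ defined by $\mu_V(E) = \langle \nu(E) V, V \rangle_{L_2(\mathcal{M})} = \|\nu(E)V\|_2^2$; its total mass is $\|V\|_2^2$ since $\nu(\mathbb{R}^{2d}) = 1$. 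The key identity is that for any bounded Borel $\eta$ on $\mathbb{R}^{2d}$ one has $\|T_\eta^{\mathbf{A},\mathbf{A}}(V)\|_2^2 = \int_{\mathbb{R}^{2d}} |\eta|^2 \, d\mu_V$, which is the standard $L_2$-isometry property for integration against a projection-valued measure against a fixed vector.

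The main step is then routine: apply this identity to $\eta = \xi_n - \xi$, giving
\[
\|T_{\xi_n}^{\mathbf{A},\mathbf{A}}(V) - T_{\xi}^{\mathbf{A},\mathbf{A}}(V)\|_2^2 = \int_{\mathbb{R}^{2d}} |\xi_n - \xi|^2 \, d\mu_V.
\]
Since $\{\xi_n\}$ is uniformly bounded, say by $C$, and $\xi_n \to \xi$ everywhere, we also have $\xi$ bounded by $C$, so $|\xi_n - \xi|^2 \leq 4C^2$, which is $\mu_V$-integrable as $\mu_V$ is finite. The pointwise convergence $|\xi_n - \xi|^2 \to 0$ everywhere together with the dominated convergence theorem then forces the integral to tend to $0$ as $n \to \infty$, which is exactly the desired conclusion. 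One should note that $T_\eta^{\mathbf{A},\mathbf{A}}$ is linear in $\eta$, so that $T_{\xi_n}^{\mathbf{A},\mathbf{A}} - T_{\xi}^{\mathbf{A},\mathbf{A}} = T_{\xi_n - \xi}^{\mathbf{A},\mathbf{A}}$; this follows from linearity of the integral against $\nu$.

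I do not anticipate a serious obstacle here: the only points requiring a little care are (i) making sure the $L_2$-isometry identity $\|T_\eta^{\mathbf{A},\mathbf{A}}(V)\|_2^2 = \int |\eta|^2 d\mu_V$ is justified — this is contained in the spectral theory of the commuting projection-valued measure $\nu$ on $L_2(\mathcal{M})$ referenced via Theorem V.2.6 of \cite{BirSol}, and in the construction recalled in \eqref{birsol spectral measure} — and (ii) noting that the limit function $\xi$ is automatically Borel and bounded by the same constant, so that $T_\xi^{\mathbf{A},\mathbf{A}}(V)$ is well-defined. With these in place, the proof is a one-line application of dominated convergence against the scalar measure $\mu_V$.
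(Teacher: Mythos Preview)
Your argument is correct: the identity $\|T_\eta^{\mathbf{A},\mathbf{A}}(V)\|_2^2=\int_{\mathbb{R}^{2d}}|\eta|^2\,d\mu_V$ for the scalar spectral measure $\mu_V(\cdot)=\langle\nu(\cdot)V,V\rangle$ is exactly the standard $L_2$-isometry for integration against a projection-valued measure, and dominated convergence against the finite measure $\mu_V$ then gives the result immediately. Linearity of $\eta\mapsto T_\eta^{\mathbf{A},\mathbf{A}}$ and the automatic boundedness and Borel measurability of the pointwise limit $\xi$ are unproblematic, as you note.

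The paper's proof reaches the same conclusion by a slightly different route: it chooses a Borel bijection $\gamma:\mathbb{R}\to\mathbb{R}^{2d}$, so that $\nu\circ\gamma$ becomes the spectral measure of a single self-adjoint operator $B$ on $L_2(\mathcal{M})$, and then rewrites $T_{\xi_n}^{\mathbf{A},\mathbf{A}}=\eta_n(B)$ with $\eta_n=\xi_n\circ\gamma$; the convergence $\eta_n(B)\to\eta(B)$ in the strong operator topology is then quoted from the one-variable Borel functional calculus. Underneath, that one-variable fact is proved by precisely your dominated-convergence computation, so the two arguments rest on the same mechanism. Your version is more direct and avoids the Borel-isomorphism detour; the paper's version has the minor advantage of reducing everything to a textbook statement about a single self-adjoint operator.
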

\begin{proof} Let $\nu$ be a projection valued measure on $\mathbb{R}^{2d}$ considered in Subsection \ref{doi subsection} (see \eqref{birsol spectral measure}). Let $\gamma:\mathbb{R}\to\mathbb{R}^{2d}$ be a Borel measurable bijection. Clearly, $\nu\circ\gamma$ is a countably additive projection valued measure on $\mathbb{R}.$ Hence, there exists a self-adjoint operator $B$ acting on the Hilbert space $L_2(\mathcal{M})$ such that $E_B=\nu\circ\gamma.$

Set $\eta_n=\xi_n\circ\gamma$ and $\eta=\xi\circ\gamma.$ We have $\eta_n\to\eta$ everywhere on $\mathbb{R}.$ Thus,
\[
\begin{split}
   & T_{\xi_n}^{\mathbf{A},\mathbf{A}}=\int_{\mathbb{R}^{2d}}\xi_nd\nu=\int_{\mathbb{R}}\eta_n(\lambda)dE_B(\lambda)=\eta_n(B)\to\eta(B) \\
= & \int_{\mathbb{R}}\eta(\lambda)dE_B(\lambda)=\int_{\mathbb{R}^{2d}}\xi d\nu=T_{\xi}^{\mathbf{A},\mathbf{A}}.
\end{split}
\]
Here, the convergence is understood with respect to the strong operator topology on the space $B(L_2(\mathcal{M})).$ In particular, \eqref{riemann sum convergence} follows.
\end{proof}

In the next proof let $\lfloor x \rfloor$ be the largest integer smaller than $x$ and let $\{ x \} = x - \lfloor x \rfloor$ be the fractional part.

\begin{proof}[Proof of Theorem \ref{doi bound}] {\bf Step 1.} Let $f:\mathbb{R}^d\to\mathbb{R}$ be a contraction. We claim that the mapping $f^n:\mathbb{Z}^d\to\mathbb{Z}$ defined by the formula
$$f^n({\bf i})=\lfloor \frac{n}{2}f(\frac{{\bf i}}{n})\rfloor,\quad {\bf i}\in\mathbb{Z}^d,$$
is also a contraction.

Indeed, we have
$$f^n({\bf i})-f^n({\bf j})=\frac{n}{2}(f(\frac{{\bf i}}{n})-f(\frac{{\bf j}}{n}))+(\{\frac{n}{2}f(\frac{{\bf j}}{n})\}-\{\frac{n}{2}f(\frac{{\bf i}}{n})\}).$$
By assumption, we have that
$$\frac{n}{2}|f(\frac{{\bf i}}{n})-f(\frac{{\bf j}}{n})|\leq\frac{n}{2}|\frac{{\bf i}}{n}-\frac{{\bf j}}{n}|\leq\frac12|{\bf i}-{\bf j}|.$$
It is immediate that
$$\{\frac{n}{2}f(\frac{{\bf j}}{n})\}-\{\frac{n}{2}f(\frac{{\bf i}}{n})\}\in(-1,1).$$
Thus,
$$|f^n({\bf i})-f^n({\bf j})|<\frac12|{\bf i}-{\bf j}|+1.$$

If $|{\bf i}-{\bf j}|\geq2,$ then
$$|f^n({\bf i})-f^n({\bf j})|<\frac12|{\bf i}-{\bf j}|+1\leq|{\bf i}-{\bf j}|$$
and the claim follows. If $|{\bf i}-{\bf j}|<2,$ then
$$|f^n({\bf i})-f^n({\bf j})|<\frac12|{\bf i}-{\bf j}|+1<2.$$
Since $|f^n({\bf i})-f^n({\bf j})|\in\mathbb{N},$ it follows that
$$|f^n({\bf i})-f^n({\bf j})|\leq 1\leq |{\bf i}-{\bf j}|$$
provided that ${\bf i}\neq{\bf j}.$ This proves the claim for $|{\bf i}-{\bf j}|<2.$

{\bf Step 2.} Let $f:\mathbb{R}^d\to\mathbb{R}$ be a contraction. For every $n\geq1,$ set
$$A_{k,n}\stackrel{def}{=}\sum_{i_k\in\mathbb{Z}}i_kE_A([\frac{i_k}{n},\frac{i_k+1}{n})),\quad \mathbf{A}_n=\{A_{k,n}\}_{k=1}^d.$$
Fix $1 \leq k_0 \leq d$. Then
$$\xi_n(t,s)=(f^n)_{k_0}(\mathbf{i},\mathbf{j}),\quad t_k\in[\frac{i_k}{n},\frac{i_k+1}{n}),s_k\in [\frac{j_k}{n},\frac{j_k+1}{n}),\quad\mathbf{i},\mathbf{j}\in\mathbb{Z}^d.$$
It is immediate that (see e.g. Lemma 8  in \cite{PScrelle} for a much stronger assertion)
$$T_{\xi_n}^{\mathbf{A},\mathbf{A}}(V)=T_{(f^n)_{k_0}}^{\mathbf{A}_n,\mathbf{A}_n}(V).$$
It follows from Theorem \ref{main lemma} that
$$\|T_{\xi_n}^{\mathbf{A},\mathbf{A}}(V)\|_{1,\infty}\leq c(d)\|V\|_1.$$
Note that $\xi_n\to \frac12f_{k_0}$ everywhere. It follows from Lemma \ref{riemann} that
$$T_{\xi_n}^{\mathbf{A},\mathbf{A}}(V)\to T_{\frac{1}{2} f_{k_0}}^{\mathbf{A},\mathbf{A}}(V),\quad V\in L_2(\mathcal{M})$$
in $L_2(\mathcal{M})$ (and, hence, in measure --- see e.g \cite{PSW}) as $n\to\infty.$ Since the quasi-norm in $L_{1,\infty}(\mathcal{M})$ is a Fatou quasi-norm \cite{LSZ}, it follows that
$$\|T_{f_{k_0}}^{\mathbf{A},\mathbf{A}}(V)\|_{1,\infty}\leq c(d)\|V\|_1,\quad V\in (L_1\cap L_2)(\mathcal{M}).$$
\end{proof}

\begin{cor}\label{Cor=LpEsitmate} For every Lipschitz function $f:\mathbb{R}^d\to\mathbb{R}$ and for every collection $\mathbf{A}=\{A_k\}_{k=1}^d$ of bounded commuting self-adjoint operators, the operator $T_{f_k}^{\mathbf{A},\mathbf{A}}$ extends to a bounded operator from $L_p(\mathcal{M})$ to $L_p(\mathcal{M}),$ $1<p<\infty.$
\end{cor}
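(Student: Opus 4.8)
The plan is to deduce the $L_p$-boundedness of $T_{f_k}^{\mathbf{A},\mathbf{A}}$, $1<p<\infty$, from the weak type $(1,1)$ estimate of Theorem \ref{doi bound} together with the trivial $L_2$-boundedness of double operator integrals, via the non-commutative Marcinkiewicz interpolation theorem. First I would note that $T_{f_k}^{\mathbf{A},\mathbf{A}}$ is bounded on $L_2(\mathcal{M})$ with norm at most $\|f_k\|_\infty \le c(d)\|\nabla(f)\|_\infty$, since it is obtained by integrating the bounded Borel function $f_k$ against the spectral measure $\nu$ of \eqref{birsol spectral measure}; this is immediate from the construction in Subsection \ref{doi subsection}. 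Theorem \ref{doi bound} gives $\|T_{f_k}^{\mathbf{A},\mathbf{A}}(V)\|_{1,\infty}\le c(d)\|\nabla(f)\|_\infty\|V\|_1$ for $V\in (L_1\cap L_2)(\mathcal{M})$.

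Next I would invoke the non-commutative Marcinkiewicz interpolation theorem (see \cite{LSZ}): a linear map that is simultaneously of weak type $(1,1)$ and bounded on $L_2(\mathcal{M})$ extends to a bounded operator on $L_p(\mathcal{M})$ for every $1<p<2$, with norm controlled by a constant depending only on $p$ times the maximum of the two endpoint bounds. This yields boundedness on $L_p(\mathcal{M})$ for $1<p<2$. For the range $2<p<\infty$ I would use a duality argument: the adjoint of $T_{f_k}^{\mathbf{A},\mathbf{A}}$ with respect to the trace pairing is again a double operator integral, namely $T_{\overline{f_k}^{\,*}}^{\mathbf{A},\mathbf{A}}$ where $\overline{f_k}^{\,*}(\lambda,\mu)=\overline{f_k(\mu,\lambda)}$; since $f_k$ is real-valued and symmetric under swapping $\lambda$ and $\mu$ (as $(f(\lambda)-f(\mu))(\lambda_k-\mu_k)$ and $\langle\lambda-\mu,\lambda-\mu\rangle$ are both symmetric), the adjoint is $T_{f_k}^{\mathbf{A},\mathbf{A}}$ itself, so self-adjointness of the map reduces the case $2<p<\infty$ to the case $1<p'<2$ already handled.

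The main obstacle is ensuring that the various a priori definitions of $T_{f_k}^{\mathbf{A},\mathbf{A}}$ — on $L_2(\mathcal{M})$ via the spectral measure, on $L_1(\mathcal{M})$ via the extension in Theorem \ref{doi bound}, and on $L_p(\mathcal{M})$ via interpolation — are mutually consistent, so that "extends to a bounded operator" is meaningful. This is handled by working on the common dense subspace $(L_1\cap L_2)(\mathcal{M})$: on this subspace all descriptions agree, it is dense in every $L_p(\mathcal{M})$ for $1\le p<\infty$, and the interpolation estimate on it yields a unique bounded extension to $L_p(\mathcal{M})$. A minor additional point is to check that the interpolation theorem in the form cited applies to the quasi-Banach target $L_{1,\infty}(\mathcal{M})$ as the weak endpoint, which is standard. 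With these consistency checks in place, the corollary follows at once.
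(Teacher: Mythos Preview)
Your proposal is correct and follows essentially the same route as the paper's own proof: weak $(1,1)$ from Theorem \ref{doi bound} plus $L_2$-boundedness, Marcinkiewicz/real interpolation for $1<p<2$, and then duality using the symmetry $f_k(\lambda,\mu)=f_k(\mu,\lambda)$ to cover $2<p<\infty$. The paper's argument is slightly terser (it does not spell out the consistency check on $(L_1\cap L_2)(\mathcal{M})$ or the explicit bound $\|f_k\|_\infty\le\|\nabla f\|_\infty$), but the logic is identical.
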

\begin{proof} By Theorem \ref{doi bound}, $T_{f_k}^{\mathbf{A},\mathbf{A}}$ extends to a bounded operator from $L_1(\mathcal{M})$ to $L_{1,\infty}(\mathcal{M})$ for every $1 \leq k \leq d$. Since also $T_{f_k}^{\mathbf{A},\mathbf{A}}:L_2(\mathcal{M})\to L_2(\mathcal{M}),$ it follows from real interpolation that $T_{f_k}^{\mathbf{A},\mathbf{A}}:L_p(\mathcal{M})\to L_p(\mathcal{M}),$ $1<p<2.$ Thus, $(T_{f_k}^{\mathbf{A},\mathbf{A}})^*:L_{\frac{p}{p-1}}(\mathcal{M})\to L_{\frac{p}{p-1}}(\mathcal{M}),$ $1<p<2.$ Since $f_k(s,t)=f_k(t,s),$ $s,t\in\mathbb{R}^d,$ it follows that $(T_{f_k}^{\mathbf{A},\mathbf{A}})^*=T_{f_k}^{\mathbf{A},\mathbf{A}}.$ In particular, $T_{f_k}^{\mathbf{A},\mathbf{A}}:L_{\frac{p}{p-1}}(\mathcal{M})\to L_{\frac{p}{p-1}}(\mathcal{M}),$ $1<p<2.$ This concludes the proof.
\end{proof}


\begin{lem}\label{eg psw lemma} If $A_k,B\in B(H),$ $1\leq k\leq d,$ are self-adjoint operators such that $[A_k,B]\in L_2(H),$ $1\leq k\leq d,$ then, for every Lipschitz function $f,$ we have
$$\sum_{k=1}^dT_{f_k}^{\mathbf{A},\mathbf{A}}([A_k,B])=[f(\mathbf{A}),B].$$
Here $f_k$ is given by \eqref{fk def}.
\end{lem}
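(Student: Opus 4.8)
The plan is to reduce the multivariable identity to a spectral-theoretic computation with the joint projection-valued measure of the commuting tuple $\mathbf{A}$. Let $E$ denote the joint spectral measure of $\mathbf{A}=\{A_k\}_{k=1}^d$ on $\mathbb{R}^d$, so that $A_k=\int_{\mathbb{R}^d}\lambda_k\,dE(\lambda)$ and $f(\mathbf{A})=\int_{\mathbb{R}^d}f(\lambda)\,dE(\lambda)$. Recalling the double operator integral formalism of Subsection \ref{doi subsection}, for a bounded Borel function $\xi$ on $\mathbb{R}^{2d}$ one has $T_{\xi}^{\mathbf{A},\mathbf{A}}(V)=\int\int \xi(\lambda,\mu)\,dE(\lambda)\,V\,dE(\mu)$, with the integral understood against the commuting pair of spectral measures $\mathcal{B}\mapsto E(\mathcal{B})\cdot$ and $\mathcal{C}\mapsto \cdot\,E(\mathcal{C})$ on $L_2(\mathcal{M})$. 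The first thing I would do is record the algebraic identity $[f(\mathbf{A}),B]=f(\mathbf{A})B-Bf(\mathbf{A})=T_{\psi}^{\mathbf{A},\mathbf{A}}(B)$ where $\psi(\lambda,\mu)=f(\lambda)-f(\mu)$; this is the standard ``divided difference equals DOI'' fact, valid here because $\psi$ is bounded on the (joint) spectrum of $\mathbf A$.

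Next I would treat the terms $T_{f_k}^{\mathbf{A},\mathbf{A}}([A_k,B])$. Using $[A_k,B]=A_kB-BA_k=T_{\phi_k}^{\mathbf{A},\mathbf{A}}(B)$ with $\phi_k(\lambda,\mu)=\lambda_k-\mu_k$, and the multiplicativity of double operator integrals in the symbol — namely $T_{\xi}^{\mathbf{A},\mathbf{A}}\circ T_{\eta}^{\mathbf{A},\mathbf{A}}=T_{\xi\eta}^{\mathbf{A},\mathbf{A}}$ whenever the symbols and their product are bounded on $\mathrm{spec}(\mathbf A)^2$ (this holds since $[A_k,B]\in L_2$ keeps us inside the Hilbert space $L_2(\mathcal{M})$ where the calculus is just functional calculus for the commuting pair of spectral measures, cf. the argument in Lemma \ref{riemann}) — one gets
\[
\sum_{k=1}^d T_{f_k}^{\mathbf{A},\mathbf{A}}([A_k,B]) = \sum_{k=1}^d T_{f_k}^{\mathbf{A},\mathbf{A}}\bigl(T_{\phi_k}^{\mathbf{A},\mathbf{A}}(B)\bigr) = T_{\sum_{k=1}^d f_k \phi_k}^{\mathbf{A},\mathbf{A}}(B).
\]
So the whole lemma comes down to the pointwise scalar identity $\sum_{k=1}^d f_k(\lambda,\mu)(\lambda_k-\mu_k)=f(\lambda)-f(\mu)$ for all $\lambda,\mu\in\mathbb{R}^d$. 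For $\lambda\neq\mu$ this is immediate from the definition \eqref{fk def}: $\sum_k f_k(\lambda,\mu)(\lambda_k-\mu_k)=\frac{f(\lambda)-f(\mu)}{\langle\lambda-\mu,\lambda-\mu\rangle}\sum_k(\lambda_k-\mu_k)^2=f(\lambda)-f(\mu)$, and for $\lambda=\mu$ both sides vanish. Hence $\sum_k f_k\phi_k=\psi$ as bounded Borel functions, and combining with the first paragraph gives $\sum_k T_{f_k}^{\mathbf{A},\mathbf{A}}([A_k,B])=T_{\psi}^{\mathbf{A},\mathbf{A}}(B)=[f(\mathbf{A}),B]$.

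The one point requiring a little care — and what I would flag as the main obstacle — is the legitimacy of the DOI manipulations: that $T_{\phi_k}^{\mathbf A,\mathbf A}(B)=[A_k,B]$ genuinely lands in $L_2(\mathcal M)$ (this is exactly the hypothesis $[A_k,B]\in L_2(H)$), that the composition rule $T_{f_k}^{\mathbf A,\mathbf A}\circ T_{\phi_k}^{\mathbf A,\mathbf A}=T_{f_k\phi_k}^{\mathbf A,\mathbf A}$ holds on $L_2$, and that $T_{\psi}^{\mathbf A,\mathbf A}(B)=[f(\mathbf A),B]$. All three follow from realizing, as in the proof of Lemma \ref{riemann}, the commuting pair of spectral measures $E(\cdot)\otimes E(\cdot)$ as a single projection-valued measure $\nu$ on $\mathbb{R}^{2d}$ and hence identifying $T_{\xi}^{\mathbf A,\mathbf A}$ with $\xi(B_\nu)$ for a self-adjoint $B_\nu$ on $L_2(\mathcal M)$; then the composition rule and the ``commutator = DOI with divided difference'' facts are just statements about the Borel functional calculus of $B_\nu$, together with the elementary observation that $A_k$ acting on the left minus on the right corresponds to multiplication by $\phi_k$, and $f(\mathbf A)$ on the left minus on the right to multiplication by $\psi$ — both bounded functions on $\mathrm{spec}(B_\nu)$ since $\mathbf A$ is bounded. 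I would state these identifications explicitly and then the computation of the previous two paragraphs finishes the proof.
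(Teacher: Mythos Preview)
Your algebraic reduction is exactly the right one, and for $\lambda,\mu$ in the joint spectrum the identity $\sum_k f_k(\lambda,\mu)(\lambda_k-\mu_k)=f(\lambda)-f(\mu)$ is of course correct. The gap is in the domain issue you flag but do not actually resolve: in the framework of Subsection~\ref{doi subsection} the double operator integral $T_{\xi}^{\mathbf A,\mathbf A}$ is a bounded operator on the Hilbert space $L_2(\mathcal M)$, and the multiplicativity $T_{\xi_1}^{\mathbf A,\mathbf A}T_{\xi_2}^{\mathbf A,\mathbf A}=T_{\xi_1\xi_2}^{\mathbf A,\mathbf A}$ is an identity of operators on $L_2(\mathcal M)$. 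You apply both $T_{\phi_k}^{\mathbf A,\mathbf A}$ and $T_{\psi}^{\mathbf A,\mathbf A}$ to $B$, but $B\in B(H)$ is not assumed to lie in $L_2(H)$; the hypothesis only gives $[A_k,B]\in L_2(H)$. Saying that ``$T_{\phi_k}^{\mathbf A,\mathbf A}(B)$ lands in $L_2$'' presupposes that $T_{\phi_k}^{\mathbf A,\mathbf A}(B)$ is defined in the first place, and the composition step $T_{f_k}^{\mathbf A,\mathbf A}\bigl(T_{\phi_k}^{\mathbf A,\mathbf A}(B)\bigr)=T_{f_k\phi_k}^{\mathbf A,\mathbf A}(B)$ cannot be justified by the $L_2$ functional calculus of $B_\nu$ when $B\notin L_2$.

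The paper's proof circumvents exactly this point. It first establishes the identity with $pB$ in place of $B$ for a finite rank projection $p$ (so $pB\in L_2(H)$ and your computation is legitimate), obtaining
\[
\sum_{k=1}^d T_{f_k}^{\mathbf A,\mathbf A}(A_kpB-pBA_k)=f(\mathbf A)pB-pBf(\mathbf A).
\]
It then invokes Voiculescu's theorem to produce finite rank projections $p_l\to 1$ strongly with $[A_k,p_l]\to 0$ in $L_d(H)$, so that $A_kp_lB-p_lBA_k\to[A_k,B]$ in $L_d(H)$. Passing to the limit on the left requires continuity of $T_{f_k}^{\mathbf A,\mathbf A}$ on $L_d$, which is supplied by Corollary~\ref{Cor=LpEsitmate} (itself a consequence of Theorem~\ref{doi bound}); the right-hand side converges strongly. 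Thus the paper's argument needs both Voiculescu's approximation and the $L_p$-boundedness already proved, neither of which appears in your proposal.
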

\begin{proof} By definition of double operator integral given in Subsection \ref{doi subsection}, we have for any bounded Borel function on $\mathbb{R}^{2d}$,
\begin{equation}\label{doi mult}
T_{\xi_1}^{\mathbf{A},\mathbf{A}}T_{\xi_2}^{\mathbf{A},\mathbf{A}}=T_{\xi_1\xi_2}^{\mathbf{A},\mathbf{A}}.
\end{equation}
Let $\xi_{1,k}=f_k$ and let $\xi_{2,k}(\lambda,\mu)=\lambda_k-\mu_k$ when $|\lambda|_2,|\mu|_2\leq\sup_{1\leq k\leq d}\|A_k\|_{\infty},$ $\xi_{2,k}(\lambda,\mu)=0$ when $|\lambda|_2 >\sup_{1\leq k\leq d}\|A_k\|_{\infty}$ or $|\mu|_2 > \sup_{1\leq k\leq d}\|A_k\|_{\infty}.$ It is immediate that
$$(\sum_{k=1}^d\xi_{1,k}\xi_{2,k})(\lambda,\mu)=f(\lambda)-f(\mu),\quad \lambda,\mu\in\mathbb{R}^d, \: {\rm s.t. } \: \vert \lambda \vert_2, \vert \mu \vert_2 \leq \sup_{1\leq k\leq d}\|A_k\|_{\infty}.$$
If $p$ is a finite rank projection, then $pB\in L_2(H)$ and
$$T_{\sum_{k=1}^d\xi_{1,k}\xi_{2,k}}^{\mathbf{A},\mathbf{A}}(pB)=f(\mathbf{A})pB-pBf(\mathbf{A}),\quad T_{\xi_{2,k}}^{\mathbf{A},\mathbf{A}}(pB)=A_kpB-pBA_k,$$
Applying \eqref{doi mult} to the operator $pB\in L_2(H),$ we obtain
\begin{equation}\label{finite p}
\sum_{k=1}^dT_{f_k}^{\mathbf{A},\mathbf{A}}(A_kpB-pBA_k)=f(\mathbf{A})pB-pBf(\mathbf{A}).
\end{equation}

By Theorem 4.2 in \cite{Voiculescu}, there exists a sequence $p_l$ of finite rank projections such that $p_l\to1$ strongly and such that, for every $1\leq k\leq d,$ $[A_k,p_l]\to0$ as $l\to\infty$ in $L_d(H)$ for $d>1$ and in $L_2(H)$ if $d=1.$ In particular,
$$A_kp_lB-p_lB A_k=p_l[A_k,B]+[A_k,p_l]B\to [A_k,B],\quad l\to\infty,$$
in $L_d(H).$

By the preceding paragraph and Corollary \ref{Cor=LpEsitmate}, we have
\begin{equation}\label{lconv}
T_{f_k}^{\mathbf{A},\mathbf{A}}(A_kp_lB-p_l B A_k)\to T_{f_k}^{\mathbf{A},\mathbf{A}}([A_k,B]),\quad l\to\infty,
\end{equation}
in $L_d(H).$ On the other hand,
\begin{equation}\label{rconv}
f(\mathbf{A})p_lB-p_lBf(\mathbf{A})\to f(\mathbf{A})B-Bf(\mathbf{A}),\quad l\to\infty,
\end{equation}
in the strong operator topology. Substituting \eqref{lconv} and \eqref{rconv} into \eqref{finite p}, we conclude the proof.
\end{proof}

\begin{proof}[Proof of Theorem \ref{final theorem}] By assumption, $[A_k,B]\in L_1(H)\subset L_2(H).$ The first assertion follows by combining Lemma \ref{eg psw lemma} and Theorem \ref{doi bound}. Applying the first assertion to the operators
$$
A_k=
\begin{pmatrix}
X_k&0\\
0&Y_k
\end{pmatrix},\quad
B=
\begin{pmatrix}
0&1\\
1&0
\end{pmatrix},
$$
we obtain the second assertion.
\end{proof}

\begin{cor}\label{Cor=Normal}
For every Lipschitz function $f:\mathbb{C} \to\mathbb{R}$ and for every   normal operator $A \in B(H)$ and every $B \in B(H)$ such that $[A,B]\in L_1(H),$ we have
$$\|[f(A),B]\|_{1,\infty}\leq c(d)\|\nabla(f)\|_{\infty} \|[A,B]\|_1.$$
For every Lipschitz function $f:\mathbb{C} \to \mathbb{R}$ and for every  pair $X, Y \in B(H)$ of normal operators  such that $X-Y\in L_1(H),$ we have
$$\|f(X)-f(Y)\|_{1,\infty}\leq c(d)\|\nabla(f)\|_{\infty} \|X-Y\|_1.$$
\end{cor}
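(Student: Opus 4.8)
The plan is to identify $\mathbb{C}$ isometrically with $\mathbb{R}^2$ and to deduce everything from the case $d=2$ of Theorems \ref{final theorem} and \ref{doi bound}. View $f$ as a Lipschitz function of two real variables; its gradient, and therefore $\|\nabla(f)\|_\infty$, is unchanged. For a normal $A\in B(H)$ put $A_1=\tfrac12(A+A^\ast)$ and $A_2=\tfrac1{2i}(A-A^\ast)$: these are commuting bounded self-adjoint operators with $A=A_1+iA_2$, and since the joint spectral measure of $\mathbf{A}=\{A_1,A_2\}$ on $\mathbb{R}^2\simeq\mathbb{C}$ is the spectral measure of the normal operator $A$, we have $f(A)=f(\mathbf{A})$.

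The second assertion follows immediately. Writing $X=X_1+iX_2$ and $Y=Y_1+iY_2$ for the analogous decompositions, $X_k-Y_k$ is the real (respectively, imaginary) part of $X-Y$, so $X_k-Y_k\in L_1(H)$ with $\|X_k-Y_k\|_1\leq\|X-Y\|_1$; applying the second assertion of Theorem \ref{final theorem} with $d=2$ to $\mathbf{X}=\{X_1,X_2\}$ and $\mathbf{Y}=\{Y_1,Y_2\}$ gives $\|f(X)-f(Y)\|_{1,\infty}=\|f(\mathbf{X})-f(\mathbf{Y})\|_{1,\infty}\leq c(2)\|\nabla(f)\|_\infty\max_{k=1,2}\|X_k-Y_k\|_1\leq c(2)\|\nabla(f)\|_\infty\|X-Y\|_1$.

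For the first assertion the situation is subtler, and here lies the only real obstacle. If $B$ were self-adjoint we would again be done: then $[A_k,B]\in L_1(H)$ with $\|[A_k,B]\|_1\leq\|[A,B]\|_1$, and the first assertion of Theorem \ref{final theorem} applies. For general $B$, however, $[A,B]\in L_1(H)$ does not force $[A_1,B],[A_2,B]\in L_1(H)$ — the passage from $[A,B]$ to $[A^\ast,B]$ is given by a Schur multiplier that need not be bounded on $L_1(H)$ — so Theorem \ref{final theorem} is not directly applicable and Lemma \ref{eg psw lemma} cannot be used to split $[f(A),B]$ across the $[A_k,B]$. The remedy is to route the estimate through a single double operator integral applied to $[A,B]$ itself. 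With $f_1,f_2$ as in \eqref{fk def} for $d=2$, set $g(\lambda,\mu)=f_1(\lambda,\mu)-if_2(\lambda,\mu)$; this is a bounded Borel function with $\|g\|_\infty\leq\|\nabla(f)\|_\infty$, and under $\mathbb{R}^2\simeq\mathbb{C}$ one checks directly that $g(\lambda,\mu)\,(\lambda-\mu)=f(\lambda)-f(\mu)$. Since $T_{\lambda-\mu}^{\mathbf{A},\mathbf{A}}(V)=[A,V]$ and $T_{f(\lambda)-f(\mu)}^{\mathbf{A},\mathbf{A}}(V)=[f(A),V]$ for $V\in L_2(H)$ (with the symbols restricted to the compact set ${\rm spec}(\mathbf{A})\times{\rm spec}(\mathbf{A})$), multiplicativity of double operator integrals \eqref{doi mult} yields $[f(A),V]=T_g^{\mathbf{A},\mathbf{A}}([A,V])$ for all $V\in L_2(H)$. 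Repeating the finite-rank approximation of Lemma \ref{eg psw lemma} — Voiculescu's theorem supplies finite-rank projections $p_l\uparrow1$ with $[A,p_l]\to0$ in $L_2(H)$, whence $[A,p_lB]=p_l[A,B]+[A,p_l]B\to[A,B]$ in $L_2(H)$ — and passing to the limit using $L_2$-boundedness of $T_g^{\mathbf{A},\mathbf{A}}$ and strong convergence $[f(A),p_lB]\to[f(A),B]$, we obtain $[f(A),B]=T_g^{\mathbf{A},\mathbf{A}}([A,B])$.

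It remains to estimate this. Because $[A,B]\in L_1(H)\subseteq L_2(H)$, Theorem \ref{doi bound} with $d=2$ applies to $T_{f_1}^{\mathbf{A},\mathbf{A}}$ and $T_{f_2}^{\mathbf{A},\mathbf{A}}$ at $V=[A,B]$; together with $T_g^{\mathbf{A},\mathbf{A}}=T_{f_1}^{\mathbf{A},\mathbf{A}}-iT_{f_2}^{\mathbf{A},\mathbf{A}}$ and the quasi-triangle inequality for $\|\cdot\|_{1,\infty}$ this gives $\|[f(A),B]\|_{1,\infty}=\|T_g^{\mathbf{A},\mathbf{A}}([A,B])\|_{1,\infty}\leq c\,\|\nabla(f)\|_\infty\|[A,B]\|_1$, as required. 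Every step except the observation that forces the single-multiplier detour is a routine repetition of arguments already carried out in the paper.
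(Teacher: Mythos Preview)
Your proof is correct. For the second assertion it is identical to the paper's one-line reduction to Theorem~\ref{final theorem} with $d=2$. For the first assertion the paper simply writes that the result is ``a direct consequence of the statements in Theorem~\ref{final theorem}'', implicitly passing from $[A,B]\in L_1(H)$ to $[A_k,B]\in L_1(H)$; as you correctly point out, this passage is automatic when $B$ is self-adjoint (the hypothesis announced in the abstract), since then $[A^\ast,B]=-([A,B])^\ast\in L_1(H)$ and $\|[A_k,B]\|_1\le\|[A,B]\|_1$, but for a general $B\in B(H)$ the implication $[A,B]\in L_1\Rightarrow[A^\ast,B]\in L_1$ can fail.

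Your route around this --- rewriting the complex divided difference as $g=f_1-if_2$ so that $g(\lambda,\mu)(\lambda-\mu)=f(\lambda)-f(\mu)$ under $\mathbb{R}^2\simeq\mathbb{C}$, establishing $[f(A),B]=T_g^{\mathbf{A},\mathbf{A}}([A,B])$ via the Voiculescu approximation exactly as in Lemma~\ref{eg psw lemma} (now applied to the single normal operator $A$ rather than to each $A_k$), and then invoking Theorem~\ref{doi bound} separately for $T_{f_1}^{\mathbf{A},\mathbf{A}}$ and $T_{f_2}^{\mathbf{A},\mathbf{A}}$ together with the quasi-triangle inequality --- is a genuine addition not present in the paper. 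It delivers the commutator estimate for arbitrary $B\in B(H)$ from the hypothesis $[A,B]\in L_1(H)$ alone, whereas the paper's short argument, read literally, covers only the self-adjoint case (or, equivalently, requires the stronger hypothesis $[A_1,B],[A_2,B]\in L_1(H)$). The cost is modest: one extra multiplicativity computation and a repetition of the approximation already carried out in Lemma~\ref{eg psw lemma}.
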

\begin{proof}
An operator $A$ is normal if and only it can be written as $A = A_1 + i A_2$ with $A_1$ and $A_2$ commuting self-adjoint operators. Identifying $\mathbb{C} \simeq \mathbb{R}^2$ we may see $f$ as a 2 real variable Lipschitz function, say $\widetilde{f}$, and this identification is compatible with spectral calculus, i.e. $f(A) = \widetilde{f}(A_1, A_2)$. Then the corollary is a direct consequence of the statements in Theorem \ref{final theorem}.
\end{proof}

\appendix

\section{Fej\'er's lemma}
In our proof we use a von Neumann-valued Fej\'er's lemma. As we could not find a reference to this type of vector valued case we prove it here for convenience of the reader.

We let $e_l, l \in \mathbb{Z}$ denote  the standard trigonometric functions on the torus. Let $\mathcal{E}$ be the conditional expectation $\mathcal{M}\otimes L^\infty(\mathbb{T}^{d+1}) \rightarrow \mathcal{M} \otimes 1$. For $k\in\mathbb{Z}^{d+1}_+,$ let
$$S_k(x)=\sum_{\substack{l\in\mathbb{Z}^{d+1}\\ -k\leq l\leq k}} \mathcal{E} (x (1\otimes e_l)^\ast) (1\otimes e_l).$$
For $n\in\mathbb{Z}_+,$ we set
$$A_n(x)=(n+1)^{-d-1}\sum_{\substack{k\in\mathbb{Z}_+^{d+1}\\ k\leq (n,\cdots,n)}}S_k(x).$$
Here, the order on $\mathbb{Z}_+^{d+1}$ is defined by $m \leq n$ if $m_j \leq n_j$ for all $1 \leq j \leq d+1$.

\begin{rmk}\label{Rmk=Fejer2}
It follows directly that for  $x \in L_2(\mathcal{M}\otimes\mathbb{T}^{d+1})$ we have $\Vert A_n x - x \Vert_2 \rightarrow 0$ as $n \rightarrow \infty$.
\end{rmk}

The assertion below is known as Fej\'er's lemma.

\begin{lem}\label{Lem=Fejer} We have $\Vert A_n(x)- x\Vert_1 \rightarrow 0$ for all  $x \in L_1(\mathcal{M}\otimes\mathbb{T}^{d+1})$ as $n\to\infty.$
\end{lem}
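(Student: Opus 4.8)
The plan is to follow the classical scalar argument for Fej\'er's lemma, but carried out with respect to the operator-valued conditional expectation $\mathcal{E}$. First I would observe that $A_n$ is a convolution operator: writing $F_n$ for the $(d+1)$-dimensional Fej\'er kernel on $\mathbb{T}^{d+1}$, namely the tensor product of one-dimensional Fej\'er kernels $F_n^{(1)}(\theta)=(n+1)^{-1}\bigl(\sin((n+1)\theta/2)/\sin(\theta/2)\bigr)^2$, one checks directly from the definitions of $S_k$ and $A_n$ that
\[
A_n(x) = (\mathrm{id}_{\mathcal{M}} \otimes \textstyle\int_{\mathbb{T}^{d+1}} \cdot \,)\bigl( x \cdot (1 \otimes \tau_{s} F_n)\bigr)\,ds,
\]
i.e. $A_n(x)$ is the $\mathcal{M}$-valued convolution $x \ast F_n$, where the translation acts on the $\mathbb{T}^{d+1}$-leg. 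The key structural facts about $F_n$ are the standard ones: $F_n \geq 0$, $\int_{\mathbb{T}^{d+1}} F_n = 1$ (normalizing Haar measure to total mass $1$ here, or carrying the $(2\pi)^{d+1}$ factor through), and for every $\delta>0$ one has $\int_{\{|s|\geq \delta\}} F_n(s)\,ds \to 0$ as $n\to\infty$; that is, $\{F_n\}$ is an approximate identity.

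Next I would record the two properties of the $\mathcal{M}$-valued convolution that make the approximate-identity argument work in the non-commutative setting. Define for $s \in \mathbb{T}^{d+1}$ the translation $L_s x = x \cdot (1 \otimes e^{i\langle s, \cdot\rangle})$-type rotation on the torus leg, i.e. $(L_s x)(\theta) = x(\theta - s)$ under the identification $\mathcal{M}\otimes L_\infty(\mathbb{T}^{d+1}) \simeq L_\infty(\mathbb{T}^{d+1}, \mathcal{M})$. Then (i) $\|L_s x\|_1 = \|x\|_1$ for every $s$, since $L_s$ is implemented by a unitary and trace is rotation-invariant; and (ii) $s \mapsto L_s x$ is continuous from $\mathbb{T}^{d+1}$ to $L_1(\mathcal{M}\otimes L_\infty(\mathbb{T}^{d+1}))$. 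Property (ii) is the only point requiring a small argument: it is immediate when $x$ is a trigonometric polynomial with coefficients in $L_1(\mathcal{M})$ (a finite sum $\sum_k W_k \otimes e_k$), since then $\|L_s x - x\|_1 \leq \sum_k \|W_k\|_1 |e^{i\langle s,k\rangle}-1| \to 0$; and such polynomials are dense in $L_1(\mathcal{M}\otimes L_\infty(\mathbb{T}^{d+1}))$ (this is exactly the density statement invoked in the proof of Theorem \ref{axis torus}), so (ii) follows for general $x$ by a standard $\varepsilon/3$ approximation using (i) for uniformity.

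With these in hand the conclusion is routine. Given $x \in L_1(\mathcal{M}\otimes L_\infty(\mathbb{T}^{d+1}))$ and $\varepsilon>0$, using $\int F_n = 1$ and $F_n \geq 0$ write
\[
\|A_n(x) - x\|_1 = \Bigl\| \int_{\mathbb{T}^{d+1}} (L_s x - x)\, F_n(s)\, ds \Bigr\|_1 \leq \int_{\mathbb{T}^{d+1}} \|L_s x - x\|_1\, F_n(s)\, ds.
\]
By (ii) choose $\delta$ so that $\|L_s x - x\|_1 < \varepsilon$ for $|s| < \delta$; the integral over $\{|s|<\delta\}$ is then $< \varepsilon$, while the integral over $\{|s|\geq\delta\}$ is bounded by $2\|x\|_1 \int_{\{|s|\geq\delta\}} F_n(s)\,ds$, which tends to $0$ by the approximate-identity property of $F_n$. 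Hence $\limsup_n \|A_n(x) - x\|_1 \leq \varepsilon$, and since $\varepsilon$ was arbitrary the lemma follows. The only mildly delicate point is the $L_1$-continuity of translations, property (ii); everything else is the verbatim classical computation, with scalars replaced by $L_1(\mathcal{M})$-valued quantities and absolute values by $\|\cdot\|_1$.
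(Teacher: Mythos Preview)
Your argument is the classical approximate-identity proof and is essentially correct in outline, but there is a logical circularity in the way you establish property (ii), the $L_1$-continuity of translations. You justify (ii) by appealing to the density of trigonometric polynomials $\sum_k W_k\otimes e_k$ in $L_1(\mathcal{M}\otimes L_\infty(\mathbb{T}^{d+1}))$, and you cite ``the density statement invoked in the proof of Theorem \ref{axis torus}''. But in this paper that density is obtained precisely \emph{from} Lemma \ref{Lem=Fejer}: the proof of Theorem \ref{axis torus} explicitly refers to Lemma \ref{Lem=Fejer} to produce the approximating sequence $W^m$. So as written your proof of Lemma \ref{Lem=Fejer} assumes a consequence of Lemma \ref{Lem=Fejer}.

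The fix is easy and does not require abandoning your approach: prove (ii) from a density statement that does not depend on Fej\'er. Using the identification $L_1(\mathcal{M}\otimes L_\infty(\mathbb{T}^{d+1}))\simeq L_1(\mathbb{T}^{d+1},L_1(\mathcal{M}))$ (which the paper itself uses), simple $L_1(\mathcal{M})$-valued functions, or continuous $L_1(\mathcal{M})$-valued functions, are dense by standard Bochner-space arguments; continuity of translations is immediate on those and extends by your $\varepsilon/3$ argument. Once (ii) is secured this way, the rest of your proof goes through.

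For comparison, the paper sidesteps the issue entirely by a different route: it first shows $\|A_n\|_{L_1\to L_1}\le 1$ via the Fej\'er-kernel representation (your convolution formula), and then reduces to the $L_2$ case by approximating $x$ first by $(p\otimes 1)x(p\otimes 1)$ with $\tau(p)<\infty$ and then by some $y\in L_2(p\mathcal{M}p\otimes L_\infty(\mathbb{T}^{d+1}))$. For such $y$ one has $A_ny\to y$ in $L_2$ (Remark \ref{Rmk=Fejer2}), hence in $L_1$ since $\tau(p)<\infty$, and the uniform $L_1$-bound on $A_n$ transfers this to $x$. The paper's argument thus needs only the elementary density of $L_1\cap L_2$ in $L_1$ (after cutting by a finite-trace projection) and the trivial $L_2$-Fej\'er statement, avoiding any appeal to $L_1$-density of trigonometric polynomials or to translation continuity. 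Your approach is arguably more conceptual and closer to the scalar proof, but you must supply (ii) independently.
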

\begin{proof} We split the proof in steps.

\vspace{0.3cm}

\noindent {\bf Step 1.} We claim that
$$\|A_nx\|_1\leq\|x\|_1,\quad x\in L_1(\mathcal{M}\otimes\mathbb{T}^{d+1}),\quad n\geq0.$$
To see this fact, we identify the space $L_1(\mathcal{M}\otimes\mathbb{T}^{d+1})$ with the space of vector-valued functions $L_1(\mathbb{T}^{d+1},L_1(\mathcal{M})).$ We now write a pointwise equality
$$(A_n(x))(t)=\int_{\mathbb{T}^{d+1}}x(t+s)\Phi_n(s)ds,\quad s\in\mathbb{T}^{d+1}.$$
Here, $\Phi_n:\mathbb{T}^{d+1}\to\mathbb{R}$ is the Fej\'er kernel  possessing the following properties.
$$\Phi_n(s)\geq0,\quad \int_{\mathbb{T}^{d+1}}\Phi_n(s)ds=1.$$
Thus,
$$\|A_nx\|_1\leq\int_{\mathbb{T}^{d+1}}\|x(\cdot+s)\|_1\Phi_n(s)ds=\|x\|_1.$$

\vspace{0.3cm}

\noindent {\bf Step 2.} Fix $\epsilon>0$ and choose a projection $p\in\mathcal{M}$ such that $\tau(p)<\infty$ and such that $\|x' \|_1<\epsilon,$ where
$$x'  := x-(p\otimes 1)x(p\otimes 1).$$
Choose $y\in L_2(p\mathcal{M}p\otimes\mathbb{T}^{d+1})$ such that
$$\|y-(p\otimes 1)x(p\otimes 1)\|_1<\epsilon.$$
In particular, we have that $\|y-x\|_1<2\epsilon.$

We clearly have $A_n y\to y$ in $L_2(p\mathcal{M}p\otimes\mathbb{T}^{d+1}).$ Since $\tau(p)<\infty,$ it follows that $A_n y\to y$ in $L_1(p\mathcal{M}p\otimes\mathbb{T}^{d+1}).$ Thus, $A_n y\to y$ in $L_1(\mathcal{M}\otimes\mathbb{T}^{d+1}).$ Choose $N$ so large that $\|A_n y- y\|_1<\epsilon$ for $n>N.$ It follows from Step 1 that
\[
\begin{split}
& \|A_n x- x\|_1\leq\|A_n(x-y)\|_1+\|A_n y - y\|_1+\|x-y\|_1\leq 2\|x-y\|_1+\|A_n y-y\|_1\\
\leq & 4\epsilon+\|A_n y- y\|_1<5\epsilon,\quad n>N.
\end{split}
\]
Since $\epsilon>0$ is arbitrarily small, the assertion follows.
\end{proof}

\end{document}